\def\stackrel#1#2{\mathrel{\mathop{#2}\limits^{#1}}}
\numberwithin{equation}{section}
\newtheorem{convention}{Convention}[section]
\newtheorem{hypothesis}{Hypothesis}
\newtheorem{theorem}{Theorem}[section]
\newtheorem{lemma}[theorem]{Lemma}
\newtheorem{corollary}[theorem]{Corollary}
\newtheorem{proposition}[theorem]{Proposition}
\newtheorem{notation}[theorem]{Notation}
\theoremstyle{definition}
\newtheorem{definition}[theorem]{Definition}
\newtheorem{remark}[theorem]{Remark}
\newcommand{\R}{{\mathbb R}}
\DeclareFontFamily{U}{mathx}{\hyphenchar\font45}
\DeclareFontShape{U}{mathx}{m}{n}{
      <5> <6> <7> <8> <9> <10>
      <10.95> <12> <14.4> <17.28> <20.74> <24.88>
      mathx10
      }{}
\DeclareSymbolFont{mathx}{U}{mathx}{m}{n}
\DeclareMathSymbol{\bigtimes}{1}{mathx}{"91}
\newcommand{\N}{{\mathbb N}}
\newcommand{\Tr}{{\textup{Tr}}}
\newcommand{\C}{{\mathbb C}}
\title{Planar diffusions with a point interaction on a finite time horizon}
\date{  }
 \author{\textbf{Barkat Mian}\footnote{ {\tt bmian@utk.edu}} \vspace{.1cm}  \\  University of Tennessee Knoxville, Department of Mathematics   }
\begin{document}
\maketitle

\begin{abstract}
The skew-product diffusion~[Ann. Appl. Probab. \textbf{35}, 3150--3214 (2025)] and exponentially tilted planar Brownian motion~[Electron. J. Probab. \textbf{30}, 1--97 (2025)] are canonical examples of planar diffusions with a point interaction at the origin in the sense that their drifts are singular only at the origin and allow visits there with positive probability. However, in this article we propose an axiomatic framework for such diffusions on a finite time horizon. We isolate admissibility conditions and additional regularity hypotheses on a general driving family under which the associated diffusion, constructed as a Doob transform of point-interaction Schr\"odinger semigroup kernels, exhibits the same point interaction structure. In particular, for the ground-state driving family, we obtain a heuristic alternative construction of the skew-product diffusion based on Kolmogorov continuity arguments. We also consider formal applications of this framework to some driving families generated by measures, including the Lebesgue-driven diffusion, which is the exponentially tilted planar Brownian motion.

\end{abstract}

\vspace{.2cm}

\section{Introduction}
The article~\cite{Chen2} develops a probabilistic framework for planar diffusions describing the motion of a particle in two dimensions subject to a point (zero-range) interaction at the origin; see also recent related works by Chen. In~\cite{CM}, related planar diffusions with a point interaction are constructed on a finite time interval, and their laws are related to the second moment of the continuum polymer measures corresponding to the critical two-dimensional stochastic heat flow; see~\cite[Sec.~5.3]{CM2}. From an analytic perspective, the dynamics of such systems are governed by the singular two-dimensional Schr\"odinger Hamiltonian with zero-range potential at the origin, which can be written formally as
\begin{align}\label{Hamiltonian}
\mathscr{H}
\,=\,
-\frac{1}{2}\,\Delta
\,-\,
\boldsymbol{\epsilon}\,\delta(x)\,,
\qquad x\in\R^2 \,.
\end{align}
Here $\Delta = \partial_{x_1}^2 + \partial_{x_2}^2$ denotes the Laplacian on $\R^2$,
$\delta(x)$ is the Dirac distribution at the origin, and $\boldsymbol{\epsilon}\in\R$ is a coupling constant. The above operator arises naturally as the Hamiltonian governing the relative motion of two interacting particles in the two-dimensional delta-Bose gas. Indeed, consider the formal two-body Hamiltonian $-\frac12\Delta_{x_1}-\frac12\Delta_{x_2}-\boldsymbol{\epsilon}\,\delta(x_1-x_2)$ describing a system of two identical bosons with positions $x_1,x_2\in\R^2$ interacting through a zero-range potential. In center of mass and relative coordinates $z:=(x_1+x_2)/2$ and $y:=x_1-x_2$, it separates as $-\frac14\Delta_z +\big(-\frac12\Delta_y-\boldsymbol{\epsilon}\,\delta(y)\big)$, so the center of mass motion evolves freely and the interaction is entirely captured by the Hamiltonian~\eqref{Hamiltonian} acting on the relative coordinate $y$. Formally, the semigroup associated with $\mathscr{H}$ admits the Feynman-Kac representation: for every bounded Borel $f$ and $t\ge0$,
\begin{align}\label{FKFormal}
\big(e^{-t\mathscr{H}}f\big)(x)
\,\stackrel{\mathrm{formal}}{=}\,
\mathbb{E}_x\!\bigg[
\exp\!\bigg\{\boldsymbol{\epsilon}\int_0^t \delta(W_s)\,ds\bigg\}\,f(W_t)
\bigg]\,,
\end{align}
where $\{W_t\}_{t\in [0,\infty)}$ is a Brownian motion in $\R^2$ started at $x$ under $\mathbb P_x$. In two dimensions the functional $\int_0^t \delta(W_s)\,ds$ is not defined without renormalization since a two-dimensional Brownian motion will almost surely never visit the origin after time zero, so~\eqref{FKFormal} is only a heuristic guide.

A rigorous interpretation of the formal Hamiltonian~\eqref{Hamiltonian} is obtained via the renormalization procedure introduced by Albeverio, Gesztesy, H{\o}egh-Krohn, and Holden~\cite{AGHH}. Fix $\vartheta, \varepsilon>0$ and let $\phi\in C_c(\R^2)$ be a normalized mollifier. Consider the regular Schr\"odinger operators with short-range potentials of the form
\begin{align}\label{WDHamiltonian}
    \mathscr{H}_\varepsilon^{\vartheta}
\,:=\,
-\frac12\Delta \,-\, \boldsymbol{\epsilon}^{\vartheta}_{\varepsilon}\,\delta_{\varepsilon}(x)\,,
\end{align}
where $\delta_{\varepsilon}(x)
=
\varepsilon^{-2}\phi(x/\varepsilon)$ and the coupling constant $\boldsymbol{\epsilon}^{\vartheta}_{\varepsilon}$ decays logarithmically according to the asymptotic expansion
\begin{align}
\boldsymbol{\epsilon}^{\vartheta}_{\varepsilon}
\,:=\,
\frac{\pi}{\log \frac{1}{\varepsilon}}
\bigg(
1
\,+\,
\frac{\frac12\log\frac{\vartheta}{2}
\,+\,
\gamma_{\mathrm{EM}}
\,+\,
I_{\phi}}
{\log \frac{1}{\varepsilon}}
\bigg)
\,+\,
\mathit{o}\bigg(\frac{ 1}{ \log^2 \frac{1}{\varepsilon} } \bigg)\,,
\qquad \varepsilon\downarrow0\,, \nonumber
\end{align}
where $I_{\phi} = \int_{(\R^2)^2}\log|x-y|\,\phi(x)\phi(y)\,dx\,dy$ and $\gamma_{\mathrm{EM}}$ denotes the Euler-Mascheroni constant. Then the operators $\mathscr{H}_\varepsilon^{\vartheta}$ converge, in the sense of norm resolvent convergence, to a self-adjoint operator
$-\tfrac12\Delta^{\vartheta}$ on $L^2(\R^2)$, meaning that for every $z\in\C\setminus\R$ the resolvent operators
$(\mathscr{H}_\varepsilon^{\vartheta}-zI)^{-1}$ converge in the operator norm on $L^2(\R^2)$ to $(-\tfrac12\Delta^{\vartheta}-zI)^{-1}$ as $\varepsilon\downarrow0$. The Feynman-Kac formula gives the following integral representation: for every bounded Borel $f$ on $\R^2$ and $t\ge0$,
\begin{align}
\big(e^{-t\mathscr{H}_{\varepsilon}^{\vartheta}}f\big)(x)
\,=\,
\mathbb{E}_x\!\bigg[
\exp\!\bigg\{
\int_0^t \boldsymbol{\epsilon}^{\vartheta}_{\varepsilon}\,\delta_{\varepsilon}(W_s)\,ds
\bigg\}
\,f(W_t)
\bigg]\,. \nonumber
\end{align}
The above mollification scheme also provides a rigorous interpretation of the formal
Feynman-Kac expression~\eqref{FKFormal}. In particular, Chen~\cite{Chen1} proved that,
for every bounded Borel measurable function $f$ on $\R^2$ and every $x\in\R^2\setminus\{0\}$,
\begin{align}
\big(e^{-t\mathscr{H}_\varepsilon^{\vartheta}}f\big)(x)
\hspace{.5cm}\stackrel{\varepsilon\rightarrow 0}{\longrightarrow} \hspace{.5cm}
\big(e^{\,\frac{t}{2}\Delta^{\vartheta}}f\big)(x)\,,
\qquad t\ge0\,. \nonumber
\end{align}
Each operator $-\tfrac12\Delta^{\vartheta}$ generates a strongly continuous semigroup
$\{e^{\,\frac{t}{2}\Delta^{\vartheta}}\}_{t\in[0,\infty)}$ with integral kernel
$\mathsf{K}_t^{\vartheta}:\R^2\times\R^2\to[0,\infty)$.
For $t>0$ and $x,y\in\R^2$, this kernel is given explicitly by the formula below;
see~\cite[Thm.~2.2]{AGHH}.
\begin{align}\label{DefFullKer}
\mathsf{K}_t^{\vartheta}(x,y)
\,:=\,
g_t(x-y)
\,+\, 2\,\pi\, \vartheta
\int_{0 < r < s < t}
g_{r}(x)\,
\nu'\!\big(\vartheta(s-r)\big)\,
g_{t-s}(y)\,ds\,dr \,,
\end{align}
where $g_t(x):= (2\pi t)^{-1}\,e^{-\frac{|x|^2}{2t}}$ denotes the two-dimensional Gaussian heat kernel and $\nu'$ is the derivative of the Volterra function  $\nu:\C\to\C$ defined by $\nu(a) := \int_0^{\infty}\frac{a^{s}}{\Gamma(s+1)}ds $. See~\cite[(3.11)]{Albeverio},~\cite[(2.7)]{GQT},~\cite[(3.56)]{CSZ5},
and~\cite[(2.17)]{Chen2} for some equivalent representations of the integral kernel $\mathsf{K}^{\vartheta}_t(x,y)$.

\subsection{Overview of the model}

The current planar diffusions with a point interaction at the origin are
constructed using entirely different methods. The diffusions in~\cite{CM} are obtained through a Doob transform of the family of integral kernels $\{\mathsf{K}_t^{\vartheta} \}_{t\in[0,\infty)}$ using the driving functions $h_t^{\vartheta,\textup{Leb}} := \mathsf{K}_t^{\vartheta}\mathbf{1}$, which correspond to integrating $\mathsf{K}_t^{\vartheta}$ against the Lebesgue measure; see~\eqref{Lebfamily}. One objective of the present work is to explore the possibility of an alternative construction of the skew-product diffusion introduced in~\cite{Chen2} using this Doob transform based technique; see Section~\ref{GStDiffusion} for a formal implementation of this approach and a discussion of the technical difficulties that arise in this case. Another objective is to extend this Doob transform based construction to more general driving families generated by measures; see, for example,~\eqref{Dirfamily} and~\eqref{Gaufamily} for the driving functions obtained by integrating $\mathsf{K}_t^{\vartheta}$ against the Dirac mass at the origin and a Gaussian probability measure, respectively. In the present paper, our focus is on
developing an abstract axiomatic framework for planar diffusions with a point interaction and on identifying the structural properties of the associated driving functions. Since a complete verification of the required integrability and asymptotic conditions for each individual family would require a separate, model-specific analysis, we apply our results to the above examples only at a formal level; see Section~\ref{ExamplesDiffusion}. Below we summarize the Doob transform construction underlying planar diffusions with a point interaction and highlight the structural features that distinguish these processes from standard planar diffusions; see Figure~\ref{FigBasicPropertiesPD}
for a summary of the basic properties. In particular, we emphasize the role of the driving family $h^{\vartheta}$, the resulting singular drift, and the pathwise mechanisms that encode the point interaction. These considerations motivate the axiomatic definition of such diffusions.

Fix $T,\vartheta>0$. The family $\{\mathsf{K}_t^{\vartheta}(x,y)\}_{t\in[0,T]}$
consists of the integral kernels of the semigroup
$\{e^{\frac{t}{2}\Delta^{\vartheta}}\}_{t\in[0,\infty)}$, restricted to
$t\in[0,T]$, and therefore satisfies the semigroup property
\begin{align}\label{SemigroupP}
\int_{\R^2}\mathsf{K}_{t}^{\vartheta}(x,z)\,
\mathsf{K}_{T-t}^{\vartheta}(z,y)\,dz
\,=\,
\mathsf{K}_{T}^{\vartheta}(x,y)\,,
\qquad 0<t<T\,.
\end{align}
However, $\mathsf{K}_t^{\vartheta}(x,\cdot)$ is not a probability density, since $(\mathsf{K}_t^{\vartheta}\mathbf{1})(x)>1$ due to the presence of the interaction term in~\eqref{DefFullKer}. It is therefore natural to view planar diffusions with a point interaction as Doob transforms of the family of integral kernels $\{\mathsf{K}_t^{\vartheta}(x,y) \}_{ t \in[0,T]}$ with respect to an appropriate choice of driving family $h^{\vartheta}$. In particular, the process $X$ has transition density kernels given, for $0\le s<t\le T$ and $x,y\in\R^2\setminus\{0\}$, by
\begin{align}\label{FirstTrans}
\mathlarger{\mathsf{d}}_{s,t}^{T,\vartheta,h}(x,y)
\,:=\,
\frac{h_{T-t}^{\vartheta}(y)}{h_{T-s}^{\vartheta}(x)}\,
\mathsf{K}_{t-s}^{\vartheta}(x,y) \,.
\end{align}
For $x=0$, the density $y\mapsto\mathlarger{\mathsf{d}}_{s,t}^{T,\vartheta,h}(0,y)$ is defined by taking the limit of the right hand side above as $x\to0$. For fixed $s\in[0,T]$ and $x\in\R^2$, the map
$(t,y)\mapsto \mathlarger{\mathsf{d}}_{s,t}^{T,\vartheta,h}(x,y)$, defined on
$(s,T]\times\R^2\setminus\{0\}$, satisfies the forward Kolmogorov equation
\begin{align}\label{KolmogorovForD}
\partial_t\,
\mathlarger{\mathsf{d}}_{s,t}^{T,\vartheta,h}(x,y)
\,=\,
\frac12\,\Delta_y\,\mathlarger{\mathsf{d}}_{s,t}^{T,\vartheta,h}(x,y)
\,-\,
\nabla_y\!\cdot\!\Big[
b_{T-t}^{\vartheta,h}(y)\,
\mathlarger{\mathsf{d}}_{s,t}^{T,\vartheta,h}(x,y)
\Big]\,,
\end{align}
where $\nabla_y=(\partial_{y_1},\partial_{y_2})$ denotes the gradient in the space variable and the drift $b_t^{\vartheta,h} : \R^2 \setminus \{0\}\to\R^2$ is given by the logarithmic gradient of the deriving function,
\begin{align}\label{DefDriftFunD}
b_t^{\vartheta,h}(x)
\,:=\,
\nabla_x\log h_t^{\vartheta}(x)
\,=\,
\frac{\nabla_x h_t^{\vartheta}(x)}{h_t^{\vartheta}(x)} \,,
\qquad x\neq0 \,.
\end{align}
We consider the driving family $h^{\vartheta}$ with the logarithmic blowup near the origin in~\eqref{hSmallaAsympt} so that the associated drift
field $b^{\vartheta,h}$ exhibits a \textit{critical} singular behavior at the
origin that is strong enough for the resulting diffusion to reflect the attractive nature of the point interaction
encoded in the negative potential in~\eqref{WDHamiltonian}.\footnote{The current models describing the two-dimensional delta interaction
have drifts with the singularity~\ref{Bee}, which motivates our choice of the logarithmic singularity at the origin for the functions $h_t^{\vartheta}$.} More precisely, for $x\neq0$ the vector $b_t^{\vartheta,h}(x)$ points toward the origin, its Euclidean norm decays super-exponentially as $|x|\to\infty$, and it blows up as $|x|\downarrow0$ according to the asymptotic
\begin{align}\label{Bee}
|b_t^{\vartheta,h}(x)| \,\sim\, \frac{1}{|x|\log(1/|x|)}
\qquad\text{as }|x|\downarrow 0\,.
\end{align}
In particular, the process $X$ can reach the origin, unlike planar Brownian motion, and hence the associated path measure $\mathbb{P}_x^{T,\vartheta,h}$ on $C([0,T],\R^2)$ is not absolutely continuous with respect to the planar Wiener measure $\mathbb{P}_x^{T}$. However, the
origin does not act as a boundary state for the process, and the interaction at the origin is of zero range.

An alternative characterization of the process \(X\) is provided by the following SDE
\begin{align}\label{SDEToSolve}
dX_t
\,=\,
dW_t^{T,\vartheta}
\,+\,
b_{T-t}^{\vartheta,h}(X_t)\,dt \,,
\qquad
t\in[0,T] \,,
\end{align}
where \(\{W_t^{T,\vartheta}\}_{t\in[0,T]}\) is a two-dimensional Brownian
motion under \(\mathbb{P}_x^{T,\vartheta,h}\), and
\(b^{\vartheta,h}\) is the singular drift field defined in~\eqref{DefDriftFunD}.
Since the path measure \(\mathbb{P}_x^{T,\vartheta,h}\) is singular with respect to \(\mathbb{P}_x^{T}\), Girsanov's theorem
(e.g.\ \cite[Thm.~3.51]{Karatzas}) cannot be applied directly to construct a weak solution to~\eqref{SDEToSolve}. In our present framework, the drift field \(b^{\vartheta,h}\) is time-dependent. However, explicit time dependence in the coefficient \(b_{T-t}^{\vartheta}\) does not, by itself, pose a fundamental difficulty, since its magnitude decreases as \(t\uparrow T\). Rather, the principal analytical challenge stems from the singular behavior of the drift at the origin. Stochastic differential equations with singular drift coefficients have been studied extensively; see, for example,~\cite{Chen, Bass,Krylov,Zhang,Jin,Kinzebulatov,Rockner}, which focus on multidimensional SDEs with time-dependent singular drifts. In the absence of time dependence, the process $X$ governed by~\eqref{SDEToSolve} belongs to the class of \emph{distorted Brownian motions}, that is, diffusions with drift of the form $\nabla_x \log h^{\vartheta}(x)$ independent of time. Distorted Brownian motions form a well-studied class of diffusions; see, for example,~\cite{Ezawa,Fukushima,Streit,Kondratiev,Trutnau}. However, no general result from either of these theories applies to the present setting, due to the particularly strong norm blowup of the drift at the origin. In the present setting the dynamics are regular away from the origin, while the singular behavior at the origin reflects the zero-range nature of the interaction. In~\cite{Chen2}, a solution to an SDE of the above type with time-independent drift is constructed; see Remark~\ref{RemarkChen2}. In the present work, we generalize the Kolmogorov continuity and martingale based approach of~\cite{CM}, developed there for the specific driving family \(h_t^{\vartheta, \textup{Leb}} :=\mathsf{K}_t^{\vartheta} \mathbf{1}\), to more general families \(h^{\vartheta}\) satisfying suitable analytic properties, and we formulate the corresponding model dependent requirements as hypotheses.

The discussion above highlights that the diffusion $X$ with a point interaction at the origin admits explicit
descriptions in terms of transition densities and an SDE with a singular drift, whose singular behavior is concentrated at the origin. These descriptions provide concrete representations of the process, but they do not by themselves capture the nature of the delta interaction. Rather, the defining features of the model are pathwise and structural: $X$ is a continuous planar diffusion whose dynamics are regular on $\R^2\setminus\{0\}$, while all effects of the interaction are localized at a single point. The origin influences the motion through repeated, instantaneous contacts that accumulate over time; however, since the interaction is point-localized and acts only through randomness, the process does not dwell at the origin, is not killed, is not reflected in the sense of being deterministically pushed away—the random fluctuations allow it to move away despite the inward drift—exhibits no jumps, as the paths remain continuous even in the presence of a highly singular drift, and gives rise to no nonlocal effects, since the interaction acts only at the origin. It is this structural behavior that we isolate and formalize in the axiomatic definition below.

\begin{definition}[Planar diffusion with a point interaction]
\label{DefRelativehMotion}
Fix $T,\vartheta>0$ and let
$h^\vartheta=\{h_t^\vartheta\}_{t\in[0,T]}$
be a family of functions
$h_t^\vartheta:\R^2\setminus\{0\}\to(0,\infty)$
such that
$h_t^\vartheta(x)\to+\infty$ as $x\to0$ for each $t\in(0,T]$.
We say that $h^\vartheta$ is an \emph{admissible family} if there exists a
two-dimensional stochastic process
$X=\{X_t\}_{t\in[0,T]}$
such that, for every $x\in\R^2\setminus\{0\}$, under the law
$\mathbb{P}_x^{T,\vartheta,h}$ of $X$ started from
$X_0=x$, the following axioms are satisfied.
\begin{enumerate}
\item[(A1)]
$X$ is a continuous Markov process on
$\R^2$ whose transition density kernels on $\R^2\setminus\{0\}$ are given by the
Doob transform \eqref{FirstTrans}.

\item[(A2)]
On $\R^2\setminus\{0\}$, the infinitesimal generator of $X$ at time
$t\in[0,T)$ is given below.
\[
\mathscr{L}_t^{T,\vartheta,h}
\,= \,
\frac12\,\Delta
\,+ \,
\nabla\log h_{T-t}^{\vartheta}\cdot\nabla  \,
\]

\item[(A3)]
There exists a bounded, continuous function
$p^{\vartheta,h}:[0,T]\times\R^2\to[0,1]$ such that, defining
\[
\mathbf{S}_t^{T,\vartheta,h}
\, := \,
p^{\vartheta,h}(T-t,X_t) \,,
\qquad t\in[0,T] \,,
\]
the process $\mathbf{S}^{T,\vartheta,h}$ is a continuous submartingale.
Moreover, in the Doob-Meyer decomposition $\mathbf{S}^{T,\vartheta,h}
=
\mathbf{M}^{T,\vartheta,h}
+
\mathbf{A}^{T,\vartheta,h}$ the process $\mathbf{A}^{T,\vartheta, h}$ is continuous, nondecreasing, starts from the origin, and is supported at the origin in the sense that
\[
\int_0^{T} \mathbf{1}_{\{X_t\neq 0\}}\,d\mathbf{A}_t^{T,\vartheta,h} \,=0 \,,
\qquad
\mathbb{P}_x^{T,\vartheta,h}\text{-a.s.}
\]
\end{enumerate}
We refer to any such process
$X=\{X_t\}_{t\in[0,T]}$
as a \emph{planar diffusion with a point interaction at the origin driven by
$h^\vartheta$}, or equivalently as a \emph{planar $h^\vartheta$-diffusion with
point interaction}. When no confusion arises, we also simply call it a
\emph{planar $h^\vartheta$-diffusion}, or use the terms
\emph{planar diffusion with point potential} or
\emph{zero-range planar diffusion}.

\end{definition}

For the existence of a planar diffusion with delta interaction, the blowup of the driving family $h^\vartheta$ near the origin must be carefully balanced: it must be strong enough to induce repeated visits to the origin through the singular drift, yet not so strong as to trap the process or cause it to spend a nonnegligible amount of time there. Under such minimal analytic requirements on $h^\vartheta$, together with
additional regularity assumptions, existence can be established. In particular, the Markov family in \textup{(A1)} is obtained in
Propositions~\ref{PropCPD}, the stochastic dynamics corresponding to
\textup{(A2)} are constructed in Proposition~\ref{PropStochPreD}, and the existence and structural properties of the submartingale in \textup{(A3)} are established in Proposition~\ref{PropSubMartD}. For clarity and future reference, we summarize these results in the following existence theorem.

\begin{theorem}[Existence of point-interaction planar diffusions]
\label{ThmExistenceRelativeh}
Fix $T,\vartheta>0$.
There exist an admissible family
$h^\vartheta=\{h_t^\vartheta\}_{t\in[0,T]}$ and a corresponding stochastic process $X=\{X_t\}_{t\in[0,T]}$
such that $X$ is a planar diffusion with a point interaction at the origin driven by $h^\vartheta$ in the sense of Definition~\ref{DefRelativehMotion}.
\end{theorem}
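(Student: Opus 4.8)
The plan is to prove Theorem~\ref{ThmExistenceRelativeh} by exhibiting one concrete admissible family and then assembling the conclusion from the three structural propositions. The natural choice is the \emph{Lebesgue-driven family} $h^\vartheta:=h^{\vartheta,\mathrm{Leb}}=\{\mathsf{K}_t^\vartheta\mathbf{1}\}_{t\in[0,T]}$ obtained by integrating the point-interaction heat kernel $\mathsf{K}_t^\vartheta$ of~\eqref{DefFullKer} against Lebesgue measure, since the associated process is precisely the exponentially tilted planar Brownian motion whose construction is already carried out in~\cite{CM}. First I would record the elementary properties of $h_t^{\vartheta,\mathrm{Leb}}$ directly from the explicit double-integral representation~\eqref{DefFullKer}: strict positivity on $\R^2\setminus\{0\}$, smoothness in $(t,x)$ for $x\neq 0$, and the logarithmic blowup $h_t^{\vartheta,\mathrm{Leb}}(x)\to+\infty$ as $x\to 0$ for $t\in(0,T]$, which is required in Definition~\ref{DefRelativehMotion}. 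From the short-distance expansion of $h_t^{\vartheta,\mathrm{Leb}}$ (driven by the behavior of the Volterra-function derivative $\nu'$ in~\eqref{DefFullKer}) one also reads off the normalization~\eqref{hSmallaAsympt}, and hence that the drift $b^{\vartheta,\mathrm{Leb}}=\nabla\log h^{\vartheta,\mathrm{Leb}}$ has the critical singularity~\eqref{Bee} at the origin, points inward, and decays super-exponentially at infinity.

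Next I would verify that $h^{\vartheta,\mathrm{Leb}}$ satisfies all of the admissibility conditions and the additional regularity hypotheses that Propositions~\ref{PropCPD},~\ref{PropStochPreD}, and~\ref{PropSubMartD} take as standing assumptions. Concretely this means checking: the semigroup identity~\eqref{SemigroupP} for $\mathsf{K}^\vartheta$; uniform control of $h_t^{\vartheta}$ and its spatial derivatives on compact subsets of $(0,T]\times(\R^2\setminus\{0\})$; the integrability of $y\mapsto \mathsf{K}_{t-s}^\vartheta(x,y)\,h_{T-t}^{\vartheta}(y)/h_{T-s}^{\vartheta}(x)$ together with the existence of the limit as $x\to 0$, so that~\eqref{FirstTrans} genuinely defines a family of transition densities satisfying the forward equation~\eqref{KolmogorovForD}; and the existence of the bounded continuous auxiliary function $p^{\vartheta,h}:[0,T]\times\R^2\to[0,1]$ needed in axiom~(A3). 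For the Lebesgue family these are exactly the estimates established in~\cite{CM}, so this step proceeds by citing that analysis and, wherever the present hypothesis list is phrased slightly differently, re-deriving the needed bounds from~\eqref{DefFullKer}.

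With the family in hand, the conclusion is assembled from the three propositions. Proposition~\ref{PropCPD} produces a continuous Markov process $X=\{X_t\}_{t\in[0,T]}$ on $\R^2$ with transition kernels~\eqref{FirstTrans}, which yields axiom~(A1) and fixes the law $\mathbb{P}_x^{T,\vartheta,h}$ for each $x\in\R^2\setminus\{0\}$. Proposition~\ref{PropStochPreD} identifies the infinitesimal generator of $X$ on $\R^2\setminus\{0\}$ as $\tfrac12\Delta+\nabla\log h_{T-t}^{\vartheta}\cdot\nabla$ and shows that $X$ solves the singular SDE~\eqref{SDEToSolve}, giving axiom~(A2). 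Finally, Proposition~\ref{PropSubMartD} supplies the bounded continuous $p^{\vartheta,h}$, shows that $\mathbf{S}_t^{T,\vartheta,h}:=p^{\vartheta,h}(T-t,X_t)$ is a continuous submartingale, and establishes that in its Doob--Meyer decomposition the compensator $\mathbf{A}^{T,\vartheta,h}$ is continuous, nondecreasing, null at $0$, and supported on $\{X_t=0\}$, which is axiom~(A3). Combining these three facts shows that $h^{\vartheta,\mathrm{Leb}}$ is admissible and that the process $X$ produced above is a planar diffusion with a point interaction at the origin driven by it, which is the assertion of the theorem.

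I expect the main obstacle to be the second step: confirming that the concrete kernel $\mathsf{K}_t^\vartheta$ meets \emph{all} the regularity hypotheses invoked by the three propositions, uniformly over the relevant ranges of $s,t,x$. The delicate points are the precise short-distance asymptotics near the origin — requiring careful handling of the Volterra-function derivative $\nu'$ in the inner integral of~\eqref{DefFullKer} and of the limiting density $\mathsf{d}_{s,t}^{T,\vartheta,h}(0,\cdot)$ — and the verification that the compensator $\mathbf{A}^{T,\vartheta,h}$ is genuinely \emph{supported at the origin} rather than merely nondecreasing, which is the pathwise signature of the zero-range interaction. For the Lebesgue family both are available from~\cite{CM}; the remaining work is bookkeeping to match that analysis to the hypothesis list used here, and one may note in passing that the same scheme would apply verbatim to any other family satisfying these hypotheses (e.g.\ the ground-state or Gaussian-driven families, once their own model-specific estimates are supplied).
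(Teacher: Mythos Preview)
Your proposal is correct and follows essentially the same approach as the paper: the paper likewise exhibits the Lebesgue-driven family $h^{\vartheta,\mathrm{Leb}}=\mathsf{K}^\vartheta\mathbf{1}$ as the concrete admissible example (Section~\ref{LebDiffusion}, with the required pre-admissibility and Hypotheses~\ref{H2}--\ref{H4} verified by citing the estimates of~\cite{CM}), and then assembles (A1)--(A3) from Propositions~\ref{PropCPD},~\ref{PropStochPreD}, and~\ref{PropSubMartD} exactly as you describe.
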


An admissible driving family is given by \(h_t^{\vartheta}(x)=(\mathsf{K}_t^{\vartheta}\mathbf{1})(x)\), for which the corresponding delta-potential diffusion was constructed in~\cite{CM}; see Section~\ref{LebDiffusion}. In Definition~\ref{SpaceTimeHEFDef} we specify the minimal conditions required
for pre-admissibility, while Hypotheses~\ref{H2}-\ref{H4} list the additional
regularity assumptions under which the associated zero-range diffusion exists.
Accordingly, throughout this work, by an \textbf{admissible driving family} we
mean a pre-admissible driving family satisfying these standing assumptions.
Under this convention, the structural results stated below are proved in
Section~\ref{SubsectionThmSubMARTD} via the more technical
Theorem~\ref{ThmSubMARTD}.

\begin{theorem}[Structural properties of planar $h^\vartheta$-diffusions]
\label{ThmPropertiesRelativeh}
Fix $T,\vartheta>0$ and let $h^\vartheta=\{h_t^\vartheta\}_{t\in[0,T]}$ be an
admissible family, with corresponding point interaction planar diffusion $X=\{X_t\}_{t\in[0,T]}$ and law $\mathbb{P}_{x}^{T,\vartheta,h}$ started from
$x\in\R^2\setminus\{0\}$. Let
\[
\tau  \,:=  \, \inf\{t \in [0,\infty)  \,:  \,X_t = 0\}
\]
denote the first hitting time of the origin. Then the following statements hold.
\begin{enumerate}[(i)]
\item $\displaystyle \mathbb{P}_{x}^{T,\vartheta,h}[\tau>T]
\,=\,
\frac{(h_0^{\vartheta}*g_T)(x)}{h_T^{\vartheta}(x)}\,$, here and throughout, $*$ denotes convolution in the spatial variable.

\item
Let
\(
\mathring{\mathbb{P}}_{x}^{T,\vartheta,h}
:=
\mathbb{P}_{x}^{T,\vartheta,h}[\,\cdot \mid \tau>T]
\).
Then, under $\mathring{\mathbb{P}}_{x}^{T,\vartheta,h}$, the process $X$
satisfies the SDE~\eqref{SDEToSolve} with a
$\mathring{\mathbb{P}}_{x}^{T,\vartheta,h}$-Brownian motion
$\mathring{W}^{T,\vartheta,h}$ and the drift given below
\[
\mathring{b}_{t}^{\vartheta,h}(x)
\,=  \,
\nabla_x\log\big(h_0^{\vartheta}*g_t\big)(x) \,,
\qquad x\neq0 \,, \, t\in[0,T] \,.
\]

\item
Let $\mathsf{S}$ be a stopping time such that
$\mathbb{P}_{x}^{T,\vartheta,h}[\mathsf{S}<\tau]>0$, and define
\(
\mathring{\mathbb{P}}_{x}^{T,\vartheta,h, \mathsf{S}}
:=
\mathbb{P}_{x}^{T,\vartheta,h}[\,\cdot \mid \mathsf{S}<\tau]
\).
Then, on the event $\{\mathsf{S}<\tau\}$, the stopped process
$\{X_{t\wedge \mathsf{S}}\}_{t\in[0,T]}$
has the same law under
$\mathring{\mathbb{P}}_{x}^{T,\vartheta,h, \mathsf{S}}$
as it does under the weighted path measure
\[
\mathring{\mathbb{E}}_{x}^{T,\vartheta,h}\!\bigg[
\frac{h_{T-\mathsf{S}}^{\vartheta}\!\big(X_{\mathsf{S}}\big)}
{(h_0^{\vartheta} * g_{T-\mathsf{S}})\!\big(X_{\mathsf{S}}\big)}
\bigg]^{-1}
 \, \frac{h_{T-\mathsf{S}}^{\vartheta}\!\big(X_{\mathsf{S}}\big)}
{(h_0^{\vartheta} * g_{T-\mathsf{S}})\!\big(X_{\mathsf{S}}\big)}
\,
\mathring{\mathbb{P}}_{x}^{T,\vartheta,h} \,.
\]

\item
The conditional distribution of $\tau$ under
$\mathbb{P}_{x}^{T,\vartheta,h}$ given $\tau<T$ is absolutely continuous on
$(0,T)$, with density
\begin{align}\label{DistTau}
\frac{\mathbb{P}_x^{T,\vartheta,h}\!\big[\,\tau\in dt\,\big|\,\tau<T\big]}{dt}
\,=\,
\frac{-1}{h_T^{\vartheta}(x)-(h_0^{\vartheta} * g_T)(x)}\,
\frac{d}{dt}\,(h_{T-t}^\vartheta*g_t)(x) \,,
\qquad t\in(0,T) \,.
\end{align}

\end{enumerate}
\end{theorem}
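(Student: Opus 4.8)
Everything hinges on one object: for each level $a\in(0,T]$, the process
\[
M_t^{(a)}\,:=\,\mathbf 1_{\{t<\tau\}}\,\frac{\big(h_{T-a}^{\vartheta}*g_{a-t}\big)(X_t)}{h_{T-t}^{\vartheta}(X_t)}\,,\qquad t\in[0,a]\,,
\]
and in particular $M:=M^{(T)}$. I claim each $M^{(a)}$ is a bounded $\mathbb P_x^{T,\vartheta,h}$-martingale. This is most naturally obtained by stopping the submartingale $\mathbf S^{T,\vartheta,h}$ of axiom (A3)/Theorem~\ref{ThmSubMARTD} at $\tau$: since its compensator $\mathbf A^{T,\vartheta,h}$ is continuous and carried by $\{X=0\}$, one has $\mathbf A^{T,\vartheta,h}_{\cdot\wedge\tau}\equiv0$, so $\mathbf S^{T,\vartheta,h}_{\cdot\wedge\tau}=M$ is a bounded martingale (and similarly for $a<T$, using that $\{X_t\}_{t\le a}$ is itself a horizon-$a$ point-interaction diffusion driven by the time-shifted family $\{h_{T-a+r}^{\vartheta}\}_{r\in[0,a]}$); alternatively it is verified directly by It\^o's formula, as sketched next. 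Granting it: $M_0^{(a)}=(h_{T-a}^{\vartheta}*g_a)(x)/h_T^{\vartheta}(x)$ while the terminal analysis below gives $M_{a^-}^{(a)}=\mathbf 1_{\{\tau>a\}}$ a.s., so $\mathbb P_x^{T,\vartheta,h}[\tau>a]=(h_{T-a}^{\vartheta}*g_a)(x)/h_T^{\vartheta}(x)$ for all $a\in(0,T]$; the case $a=T$ is (i), the general formula drives (iv), and (ii)--(iii) follow by recognizing the conditionings as $h$-transforms governed by $M$.

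\textbf{The martingale and the killed kernel.} Fix $a\in(0,T]$ and a bounded continuous $\varphi\ge0$; put $N_t:=\mathbf 1_{\{t<\tau\}}\,\big(g_{a-t}*(h_{T-a}^{\vartheta}\varphi)\big)(X_t)/h_{T-t}^{\vartheta}(X_t)$ on $[0,a)$ (so $M^{(a)}=N$ when $\varphi\equiv1$). On $\{t<\tau\}$, where $X_t\neq0$, the numerator $\psi(t,\cdot):=g_{a-t}*(h_{T-a}^{\vartheta}\varphi)$ is a genuine heat evolution, hence $C^{1,2}$ with $\partial_t\psi=-\tfrac12\Delta\psi$, and $\phi(t,\cdot):=h_{T-t}^{\vartheta}$ satisfies $\partial_t\phi=-\tfrac12\Delta\phi$ on $\R^2\setminus\{0\}$ by the defining heat-equation property of a pre-admissible family; a two-line computation then gives $\big(\partial_t+\tfrac12\Delta+\nabla\log h_{T-t}^{\vartheta}\cdot\nabla\big)(\psi/\phi)\equiv0$ on $\R^2\setminus\{0\}$, so It\^o's formula with the generator in (A2) shows $N_{\cdot\wedge\tau}$ is a continuous local martingale. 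It is bounded by $\|\varphi\|_\infty$, using the sub-semigroup domination $g_b*h_c^{\vartheta}\le h_{b+c}^{\vartheta}$ that is part of pre-admissibility (equivalently $\mathsf K^{\vartheta}\ge g$ in~\eqref{DefFullKer}), hence a true martingale converging in $L^1$ as $t\uparrow a$; on $\{\tau>a\}$ the limit is $\varphi(X_a)$, on $\{\tau<a\}$ it is $0$, and $\{\tau=a\}\subseteq\{X_a=0\}$ is $\mathbb P_x^{T,\vartheta,h}$-null since $X_a$ has the absolutely continuous law~\eqref{FirstTrans} (so $\tau$ has no atoms). Thus $\mathbb E_x^{T,\vartheta,h}[\varphi(X_a)\mathbf 1_{\{\tau>a\}}]=N_0=(h_{T-a}^{\vartheta}\varphi*g_a)(x)/h_T^{\vartheta}(x)$, and running the argument from time $s$ upgrades this to the statement that the sub-transition kernel of $X$ killed at the origin is $\mathsf d_{s,t}^{\circ}(x,y)=\tfrac{h_{T-t}^{\vartheta}(y)}{h_{T-s}^{\vartheta}(x)}g_{t-s}(x-y)$. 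Taking $\varphi\equiv1$ gives the displayed formula for $\mathbb P_x^{T,\vartheta,h}[\tau>a]$ and, at $a=T$, part (i).

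\textbf{Conditioning: (ii) and (iii).} Since $M$ is a bounded continuous martingale with $M_T=\mathbf 1_{\{\tau>T\}}$, one has $M_t=\mathbb P_x^{T,\vartheta,h}[\tau>T\mid\mathcal F_t]$, hence $\frac{d\mathring{\mathbb P}_x^{T,\vartheta,h}}{d\mathbb P_x^{T,\vartheta,h}}\big|_{\mathcal F_t}=M_t/M_0$ for $\mathring{\mathbb P}_x^{T,\vartheta,h}=\mathbb P_x^{T,\vartheta,h}[\,\cdot\mid\tau>T]$. Combining this with $\mathsf d^{\circ}$ (legitimate since $\mathring{\mathbb P}_x^{T,\vartheta,h}$ is carried by $\{\tau>T\}$), the transition kernel of $X$ under $\mathring{\mathbb P}_x^{T,\vartheta,h}$ is $\frac{M_t(y)}{M_s(x)}\mathsf d_{s,t}^{\circ}(x,y)=\frac{(h_0^{\vartheta}*g_{T-t})(y)}{(h_0^{\vartheta}*g_{T-s})(x)}g_{t-s}(x-y)$, the Doob transform of planar Brownian motion by the space--time harmonic, source-free-heat function $(t,x)\mapsto(h_0^{\vartheta}*g_{T-t})(x)$; its forward Kolmogorov equation is~\eqref{KolmogorovForD} with $h^{\vartheta}$ replaced by $h_0^{\vartheta}*g_{\,\cdot}$, so by the martingale problem and L\'evy's characterization (the local-martingale part, hence $\langle X\rangle_t=t\,I_2$, is unchanged) $X$ solves~\eqref{SDEToSolve} under $\mathring{\mathbb P}_x^{T,\vartheta,h}$ with drift $\mathring b_t^{\vartheta,h}=\nabla\log(h_0^{\vartheta}*g_t)$; since $X$ avoids $0$ under this measure the log-singularity of the drift is never felt. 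This is (ii). For (iii), optional sampling of $M$ at $\mathsf S$ (replacing $\mathsf S$ by $\mathsf S\wedge T$) gives $\frac{d\mathring{\mathbb P}_x^{T,\vartheta,h}}{d\mathbb P_x^{T,\vartheta,h}}\big|_{\mathcal F_{\mathsf S}}=M_{\mathsf S}/M_0$ with $\{M_{\mathsf S}>0\}=\{\mathsf S<\tau\}$, so for any bounded functional $Z$ of the stopped path $\{X_{t\wedge\mathsf S}\}_{t\le T}$, $\mathbb E_x^{T,\vartheta,h}[Z\mathbf 1_{\{\mathsf S<\tau\}}]=M_0\,\mathring{\mathbb E}_x^{T,\vartheta,h}[Z/M_{\mathsf S}]$; dividing by the $Z\equiv1$ instance yields $\mathbb E_x^{T,\vartheta,h}[Z\mid\mathsf S<\tau]=\mathring{\mathbb E}_x^{T,\vartheta,h}[Z/M_{\mathsf S}]\big/\mathring{\mathbb E}_x^{T,\vartheta,h}[1/M_{\mathsf S}]$ with $1/M_{\mathsf S}=h_{T-\mathsf S}^{\vartheta}(X_{\mathsf S})/(h_0^{\vartheta}*g_{T-\mathsf S})(X_{\mathsf S})$ and $\mathring{\mathbb E}_x^{T,\vartheta,h}[1/M_{\mathsf S}]=\mathbb P_x^{T,\vartheta,h}[\mathsf S<\tau]/M_0\in(0,\infty)$, the asserted identity of laws on $\{\mathsf S<\tau\}$.

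\textbf{Law of $\tau$: (iv).} As $\tau$ has no atoms, $\mathbb P_x^{T,\vartheta,h}[\tau<T]=\big(h_T^{\vartheta}(x)-(h_0^{\vartheta}*g_T)(x)\big)/h_T^{\vartheta}(x)$, and for $0<t<T$ the formula $\mathbb P_x^{T,\vartheta,h}[\tau>t]=(h_{T-t}^{\vartheta}*g_t)(x)/h_T^{\vartheta}(x)$ gives $\mathbb P_x^{T,\vartheta,h}[\tau<t\mid\tau<T]=\big(h_T^{\vartheta}(x)-(h_{T-t}^{\vartheta}*g_t)(x)\big)/\big(h_T^{\vartheta}(x)-(h_0^{\vartheta}*g_T)(x)\big)$; differentiating in $t$ produces~\eqref{DistTau}, the density being nonnegative because this distribution function is nondecreasing, i.e.\ $t\mapsto(h_{T-t}^{\vartheta}*g_t)(x)$ is nonincreasing. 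The single genuinely model-dependent ingredient is that $t\mapsto(h_{T-t}^{\vartheta}*g_t)(x)$ be locally absolutely continuous on $(0,T)$ — so that $\tau$ restricted to $\{\tau<T\}$ has a density and not merely a continuous distribution function — which follows from differentiation under the integral once one invokes the regularity Hypotheses~\ref{H2}--\ref{H4} (the smoothing of the point-interaction heat flow and the uniform local integrability of the singularity of $h_u^{\vartheta}$ at the origin). I expect the main obstacles to be precisely this absolute-continuity point, the passage to the terminal time $t\uparrow a$, and the upgrade of $N_{\cdot\wedge\tau}$ from a local to a genuine martingale; everything else is It\^o calculus, $h$-transform bookkeeping, and optional stopping.
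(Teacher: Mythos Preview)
Your proposal is correct and follows essentially the same strategy as the paper: the core object is the stopped submartingale $\mathbf S^{T,\vartheta,h}_{\cdot\wedge\tau}$, which is a bounded martingale because the compensator $\mathbf A^{T,\vartheta,h}$ is carried by $\{X=0\}$, and everything flows from optional stopping and Girsanov-type change of measure.

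There are two minor organizational differences worth noting. First, you introduce the whole family $M^{(a)}$, $a\in(0,T]$, and thereby obtain $\mathbb P_x^{T,\vartheta,h}[\tau>a]=(h^\vartheta_{T-a}*g_a)(x)/h^\vartheta_T(x)$ for all $a$ in one stroke; the paper only proves the case $a=T$ directly (Part~(i)) and then recovers the general formula for Part~(iv) by routing through Part~(iii) with the deterministic stopping time $\mathsf S=t$ and computing $\mathring{\mathbb E}_x^{T,\vartheta}[\Lambda_t^{T,\vartheta}]$ under the conditioned density~\eqref{SecondTrans}. Your route is slightly more direct and also yields the killed sub-kernel $\mathsf d^{\circ}_{s,t}(x,y)=\tfrac{h^\vartheta_{T-t}(y)}{h^\vartheta_{T-s}(x)}g_{t-s}(x-y)$ explicitly, which the paper leaves implicit. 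Second, for Part~(ii) you identify the conditioned transition kernel first (as a Doob transform of the heat kernel by $(t,x)\mapsto(h_0^\vartheta*g_{T-t})(x)$) and then read off the SDE; the paper instead writes $dM_t^{T,\vartheta}=\mathbf 1_{\{t<\tau\}}M_t^{T,\vartheta}\nabla\log\mathsf p^\vartheta_{T-t}(X_t)\cdot dW_t^{T,\vartheta}$ and applies Girsanov explicitly to produce $\mathring W^{T,\vartheta}$. Both are standard and equivalent; the paper's Girsanov computation is perhaps more self-contained in that it makes the new Brownian motion explicit rather than appealing to the martingale problem.
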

Part~(i) shows that the survival probability up to the finite horizon $T$ is
strictly between zero and one. In particular, starting from any
$x\in\R^2\setminus\{0\}$, the process hits the origin before time $T$ with
positive probability, but it also survives without hitting the origin up to time
$T$ with positive probability. Thus the origin is neither absorbing nor
avoidable over the time interval $[0,T]$, reflecting the critical nature of the
two-dimensional point interaction. Part~(ii) identifies the dynamics under the
conditional law $\mathring{\mathbb P}_x^{T,\vartheta,h}$, under which the
singular point interaction disappears and the process $X$ is a regular planar
diffusion with transition density
\begin{align}\label{SecondTrans}
\mathlarger{\mathsf{\mathring{d}}}_{s,t}^{T,\vartheta,h}(x,y)
\,:=\,
\frac{(h_0^\vartheta*g_{T-t})(y)}{(h_0^\vartheta*g_{T-s})(x)}\, g_{t-s}(x-y)\,,
\qquad 0\le s<t\le T\,.
\end{align}

\begin{figure}[t]
\centering
\begin{tikzpicture}[font=\small,
  card/.style={draw, rounded corners, inner sep=7pt, align=left}
]

\node[card] (CARD) {%
\parbox{0.92\linewidth}{%
\centering \textbf{Planar diffusion $X$ with a point interaction at the origin driven by $h^\vartheta=\{h_t^\vartheta\}_{t\in[0,T]}$}\\[4pt]
\raggedright
\setlength{\tabcolsep}{0pt}
\begin{tabular}{@{}p{0.27\linewidth}p{0.65\linewidth}@{}}
\textbf{Base semigroup} &
$\displaystyle
\mathsf{K}_t^{\vartheta}(x,y)
\overset{\eqref{DefFullKer}}{:=}
g_t(x-y)
+ 2\,\pi\, \vartheta
\int_{0 < r < s < t}
g_{r}(x)\,
\nu'\!\big(\vartheta(s-r)\big)\,
g_{t-s}(y)\,ds\,dr
$
\end{tabular}
\\[4pt]

\setlength{\tabcolsep}{0pt}
\renewcommand{\arraystretch}{1.05}
\begin{tabular}{@{}p{0.27\linewidth}p{0.65\linewidth}@{}}

\textbf{Transition density} &
$\displaystyle
\mathsf d^{T,\vartheta,h}_{s,t}(x,y)
\overset{\eqref{FirstTrans}}{=}
\frac{h_{T-t}^\vartheta(y)}{h_{T-s}^\vartheta(x)}\,\mathsf{K}_{t-s}^\vartheta(x,y),
\qquad 0\le s<t\le T
$\\[9pt]

\textbf{Law} &
$\mathbb{P}^{T,\vartheta,h}_x$ on $C([0,T],\R^2)$ \qquad \big(Prop.\ \ref{PropCPD} \big) \\[8pt]

\textbf{Drift} &
$\displaystyle
b_t^{\vartheta,h}(x)
\overset{\eqref{DefDriftFunD}}{=}
\nabla\log h_t^\vartheta(x),
\qquad x\neq 0
$\\[8pt]

\textbf{Generator} &
$\displaystyle
\mathscr{L}^{T,\vartheta,h}_t
=
\frac12\,\Delta
+
b_{T-t}^{\vartheta,h}\cdot\nabla,
\qquad x\neq0
$\\[8pt]

\textbf{SDE} &
$\displaystyle
dX_t
=
dW_t^{T,\vartheta}
+
b_{T-t}^{\vartheta,h}(X_t)\,dt,
\qquad t\in[0,T] 
$ \qquad \big(Prop.\ \ref{PropStochPreD} \big)
\\[10pt]

\textbf{Radial SDE} &
$\displaystyle
dR_t
=
d\bar W_t^{T,\vartheta}
+
\Big(
\frac{1}{2R_t}
+
\bar b_{T-t}^{\vartheta,h}(R_t)
\Big)\,dt
$\\[10pt]

\textbf{Conditional density} &
$\displaystyle
\mathring{\mathsf d}^{\,T,\vartheta,h}_{s,t}(x,y)
\,\overset{\ref{SecondTrans}}{=}\,
\frac{(h_0^\vartheta*g_{T-t})(y)}{(h_0^\vartheta*g_{T-s})(x)}\, g_{t-s}(x,y),
\qquad 0\le s<t\le T
$\\[11pt]

\textbf{Conditional law} &
$\mathring{\mathbb{P}}_{x }^{T,\vartheta,h}
:=
\mathbb{P}_{x }^{T,\vartheta,h}[\,\cdot \mid \tau>T]$ \qquad \big(Thm.\ \ref{ThmPropertiesRelativeh}(ii) \big)
\\[8pt]

\textbf{Conditional drift} &
$\displaystyle
\mathring b_t^{\vartheta,h}(x)
=
\nabla_x\log (h_0^{\vartheta} * g_t)(x),
\qquad x\neq 0 
$ \qquad \big(Thm.\ \ref{ThmPropertiesRelativeh}(ii) \big)
\\[8pt]

\textbf{Conditional generator} &
$\displaystyle
\mathring{\mathscr{L}}^{T,\vartheta,h}_t
=
\frac12\,\Delta
+
\mathring b_{T-t}^{\vartheta,h}\cdot\nabla,
\qquad x\neq0
$
\\[8pt]

\textbf{Conditional SDE} &
$\displaystyle
dX_t
=
d\mathring W_t^{T,\vartheta}
+
\mathring b_{T-t}^{\vartheta,h}(X_t)\,dt,
\qquad t\in[0,T],
$ \qquad \big(Thm.\ \ref{ThmPropertiesRelativeh}(ii) \big)
\\[10pt]

\textbf{Survival} &
$\displaystyle
\mathbb{P}_{x}^{T,\vartheta,h}[\tau>T]
=
\frac{(h_0^\vartheta*g_T)(x)}{h_T^\vartheta(x)}
$ \qquad \big(Thm.\ \ref{ThmPropertiesRelativeh}(i) \big)
\\[10pt]

\textbf{Hit-time given hit} &
$\displaystyle
\frac{\mathbb{P}_x^{T,\vartheta,h}\!\big[\,\tau\in dt\,\big|\,\mathcal{O}_T\big]}{dt}
\,=\,
\frac{-\partial_t\,(h_{T-t}^\vartheta*g_t)(x)}{h_T^{\vartheta}(x)-(h_0^{\vartheta} * g_T)(x)}\, $ \qquad \big(Thm.\ \ref{ThmPropertiesRelativeh}(iv) \big)
\\[-2pt]
\end{tabular}
}%
};
\end{tikzpicture}
\caption{Here $R_t:=|X_t|$ is the radial process and $\tau:=\inf\{t\ge0:X_t=0\}$. The one-dimensional Brownian motion $\bar W^{T,\vartheta}$ is defined by radial projection of $W^{T,\vartheta}$ via $d\bar W_t^{T,\vartheta}=\frac{X_t}{|X_t|}\cdot dW_t^{T,\vartheta}$ for $t<\tau$, and the radial drift is the corresponding projection $\bar b_t^\vartheta(r):=\frac{x}{|x|}\cdot b_t^\vartheta(x)$ for $x\neq0$ with $|x|=r$.}
\label{FigBasicPropertiesPD}
\end{figure}

\subsection{Structure of the paper}
The remainder of the paper is organized as follows.

\begin{itemize}

\item In Section~\ref{ModelFormulation} we formulate an abstract model for planar diffusions with a point interaction on a finite time horizon and introduce admissible driving families together with standing hypotheses. In particular, Sections~\ref{MarkoveFamily}--\ref{SectionOriginEventD} describe the construction of the associated Markov family, the stochastic dynamics, and the submartingale characterization.

\item In Section~\ref{ExamplesDiffusion} we apply this abstract framework formally to the ground-state diffusion of~\cite{Chen2} and to some measure-driven diffusions, including the Lebesgue-driven diffusion of~\cite{CM}. Figure~\ref{GStFigure} provides a formal summary of the basic characteristics of the ground-state diffusion and its survival-conditioned law on a finite time horizon. A comparative summary of the laws obtained in this formal setting, their associated driving families, and the corresponding conditional laws is given in Figure~\ref{LawsRelations}.

\item  Sections~\ref{MarkoveFamilyProof}--\ref{SectionOriginEventDProof} contain the proofs of the results stated in Sections~\ref{MarkoveFamily}--\ref{SectionOriginEventD}, respectively.

\item The Appendix~\ref{AuxLemmas} contains auxiliary analytic lemmas.

\end{itemize}

\begin{notation}
If a function $F:\R^2\to\R$ is radially symmetric,
we write $\bar F:[0,\infty)\to\R$ for its radial representative, defined by $\bar F(r):=F(x)$ for any $x\in\R^2$ such that $ |x|=r$. When no ambiguity arises, we implicitly identify $F(x)$ with $\bar F(|x|)$.
\end{notation}

\section{Model formulation: point-interaction planar diffusions} \label{ModelFormulation}

We now isolate the analytic structure underlying planar diffusions with a point interaction at the origin and formulate an abstract class of driving families suitable for a finite-horizon Doob transform construction. Rather than fixing a specific stochastic differential equation, we take the driving family as the primary object and encode the properties required for the resulting diffusion to exhibit a point interaction.

\begin{definition}\label{SpaceTimeHEFDef}
Fix $\vartheta>0$ and a finite time horizon $T>0$.
For each $t\in[0,T]$, let $h_t^\vartheta:\R^2\to(0,\infty]$ be a function satisfying
the following properties.
\begin{enumerate}[(i)]

\item
The function $h_t^\vartheta$ is $C^2$ on $\R^2\setminus\{0\}$, radially symmetric,
and the radial profile $\bar h_t^\vartheta:(0,\infty)\to(0,\infty)$ is decreasing
and satisfies:

\begin{itemize}
\item
There exists a continuous function $c^{T,\vartheta}:[0,T]\to[0,\infty)$ such that
\begin{align}\label{hSmallaAsympt}
\bar h_t^\vartheta(a)
\,\sim\,
c^{T,\vartheta}(t)\,\log\frac{1}{a} \,,
\qquad\text{as }a\downarrow0,\quad t\in(0,T]  \,.
\end{align}
\item
There exists a radially symmetric function $\Psi^{T,\vartheta}:\R^2\to(0,\infty)$,
independent of $t$, with radial profile $\bar\Psi^{T,\vartheta}$ such that for
every $L>0$ there exist constants
$0<c_{L}^{T,\vartheta}\le C_{L}^{T,\vartheta}<\infty$ for which
\begin{align}\label{hInfinityAsympt}
c_{L}^{T,\vartheta}\,\bar\Psi^{T,\vartheta}(a)
\,\le\,
\bar h_t^\vartheta(a)
\,\le\,
C_{L}^{T,\vartheta}\,\bar\Psi^{T,\vartheta}(a) \,,
\qquad a\ge1 \,,\quad t\in(0,T] \,,\quad \vartheta t\le L \,.
\end{align}
\end{itemize}

\item
The family $\{h_t^\vartheta\}_{t\in[0,T]}$ is space-time harmonic with respect to the family of integral kernels $\{\mathsf{K}_t^\vartheta\}_{t\in[0,T]}$, in the sense that
\begin{align}\label{hspaceTimeHarmonic}
\int_{\R^2}
\mathsf{K}_{t-s}^{\vartheta}(x,y)\,h_{T-t}^{\vartheta}(y)\,dy
\,=\,
h_{T-s}^{\vartheta}(x) \,,
\qquad 0\le s<t\le T \,,\ x\in\R^2 \,.
\end{align}

\item
For all $(x,t)\in(\R^2\setminus\{0\})\times (0,T)$, the function $h_t^\vartheta$
satisfies the diffusion equation
\begin{align}\label{DEh}
\partial_t h_t^\vartheta(x)
\,=\,
\frac12\,\Delta h_t^\vartheta(x) \,.
\end{align}

\item
The map $(x,\vartheta,t)\mapsto h_t^\vartheta(x)$ is jointly continuous on
$(\R^2\setminus\{0\})\times(0,\infty)\times[0,T]$.
\end{enumerate}
We call any family $h^\vartheta=\{h_t^\vartheta\}_{t\in[0,T]}$ satisfying {\rm(i)}-{\rm(iv)} a \emph{pre-admissible driving family} for $\{\mathsf{K}_t^\vartheta\}_{t\in[0,T]}$.

\end{definition}

Notice that the class of pre-admissible driving families is stable under
positive linear combinations: if $h^{\vartheta}$ and $\hbar^{\vartheta}$ satisfy
the above definition, then so does $\alpha\,h^{\vartheta}+\beta\,\hbar^{\vartheta}$
for any constants $\alpha,\beta\ge 0$, provided that the resulting family remains
strictly positive. Two non-trivial examples satisfying the above definition are
the ground-state-driven family
$h_t^{\vartheta,\textup{GSt}}(x):=e^{\vartheta t}
K_0(\sqrt{2\vartheta}\,|x|)$, where $K_0$ denotes the modified Bessel function given by~\eqref{DefBessel}, and the Lebesgue-driven family
$h_t^{\vartheta,\textup{Leb}}(x):=1+ \mathsf{V}_t^{\vartheta,\textup{Leb}}(x)$, where
$\mathsf{V}_t^{\vartheta,\textup{Leb}}$ is the Volterra-type kernnel defined in~\eqref{DefH}; see Sections~\ref{GStDiffusion} and~\ref{LebDiffusion}.

For the remainder of the paper, we fix a pre-admissible driving family $h^\vartheta=\{h_t^\vartheta\}_{t\in[0,T]}$ and let $\mathlarger{\mathsf d}_{s,t}^{T,\vartheta}:\R^2\times\R^2\to(0,\infty]$ denote the associated Doob transform transition densities defined in~\eqref{FirstTrans} for all $0\le s<t\le T$. Although $\mathlarger{\mathsf d}_{s,t}^{T,\vartheta}$ depends on the choice of the driving family, this dependence is suppressed throughout unless multiple driving families are under consideration, in which case we write $\mathlarger{\mathsf d}_{s,t}^{T,\vartheta,h}$. The same convention applies to related objects, such as the vector field $b_t^{\vartheta}$ and the probability measures $\mathbb P_x^{T,\vartheta}$. We also adopt the following analytic conventions, which will be used throughout.

\begin{convention}\label{ConventionAnalytic}
We implicitly assume that all differentiations under the integral sign and
integrations by parts with respect to the spatial variable are justified, and
that boundary terms at infinity and at the origin vanish. In addition, for every
$\varphi\in C_c^2(\R^2)$,
\[
\lim_{h\downarrow 0} \,\int_{\R^2} \,
\mathlarger{\mathsf{d}}_{t,t+h}^{T,\vartheta}(x,y)\,\varphi(y)\,dy
 \,= \,
\varphi(x) \,,
\qquad t\in[0,T) \,,\ x\in\R^2\setminus\{0\} \,.
\]
\end{convention}
The proof of the following lemma follows from the semigroup property~\eqref{SemigroupP} of the family of integral kernels $\{\mathsf{K}_t^{\vartheta}(x,y)\}_{t\in[0,T]}$, together with the space-time harmonicity relation~\eqref{hspaceTimeHarmonic} of the driving family $h^\vartheta$. We therefore omit the proof.

\begin{lemma}\label{LemTranKern}
Fix $T,\vartheta>0$. Then $\{\mathlarger{\mathsf{d}}_{s,t}^{T,\vartheta}\}_{0\le s<t\le T}$ forms a family of transition probability densities on $[0,T]$. In particular, for all $0\le s<u<t\le T$ and $x,y\in\R^2$,
\[
\int_{\R^2} \,
\mathlarger{\mathsf{d}}_{s,u}^{T,\vartheta}(x,z)\,
\mathlarger{\mathsf{d}}_{u,t}^{T,\vartheta}(z,y)\,dz
 \,= \,
\mathlarger{\mathsf{d}}_{s,t}^{T,\vartheta}(x,y) \,,
\qquad
\int_{\R^2} \,
\mathlarger{\mathsf{d}}_{s,t}^{T,\vartheta}(x,y)\,dy
 \,= \,1 \,.
\]
\end{lemma}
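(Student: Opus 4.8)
The plan is to obtain both displayed identities by a direct substitution of the Doob-transform formula~\eqref{FirstTrans}, thereby reducing the Chapman--Kolmogorov identity to the semigroup property~\eqref{SemigroupP} of $\{\mathsf{K}_t^\vartheta\}_{t\in[0,T]}$ and the normalization to the space-time harmonicity~\eqref{hspaceTimeHarmonic} of $h^\vartheta$. Nonnegativity of $\mathlarger{\mathsf{d}}_{s,t}^{T,\vartheta}$ is immediate from $\mathsf{K}_t^\vartheta\ge 0$ and $h_t^\vartheta>0$, so the content is the two integral identities, and since all integrands below are nonnegative every interchange of integrals is justified by Tonelli's theorem.

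For $x\in\R^2\setminus\{0\}$ and $y\in\R^2$ I would first insert~\eqref{FirstTrans} into the left-hand side of the Chapman--Kolmogorov relation and observe the key cancellation: the factor $h_{T-u}^\vartheta(z)$ in the numerator of $\mathlarger{\mathsf{d}}_{s,u}^{T,\vartheta}(x,z)$ cancels the $h_{T-u}^\vartheta(z)$ in the denominator of $\mathlarger{\mathsf{d}}_{u,t}^{T,\vartheta}(z,y)$, leaving
\[
\int_{\R^2}\mathlarger{\mathsf{d}}_{s,u}^{T,\vartheta}(x,z)\,\mathlarger{\mathsf{d}}_{u,t}^{T,\vartheta}(z,y)\,dz
=\frac{h_{T-t}^\vartheta(y)}{h_{T-s}^\vartheta(x)}\int_{\R^2}\mathsf{K}_{u-s}^\vartheta(x,z)\,\mathsf{K}_{t-u}^\vartheta(z,y)\,dz .
\]
Applying the semigroup property~\eqref{SemigroupP} (which is available for the decomposition $(u-s)+(t-u)=t-s$, since $\{\mathsf{K}_a^\vartheta\}_{a\ge 0}$ is the restriction of the genuine semigroup $\{e^{\frac a2\Delta^\vartheta}\}_{a\ge 0}$) turns the remaining $z$-integral into $\mathsf{K}_{t-s}^\vartheta(x,y)$, and the right-hand side is exactly $\mathlarger{\mathsf{d}}_{s,t}^{T,\vartheta}(x,y)$. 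For the normalization I would pull the $x$-dependent factor out of the integral and invoke~\eqref{hspaceTimeHarmonic}:
\[
\int_{\R^2}\mathlarger{\mathsf{d}}_{s,t}^{T,\vartheta}(x,y)\,dy
=\frac{1}{h_{T-s}^\vartheta(x)}\int_{\R^2}\mathsf{K}_{t-s}^\vartheta(x,y)\,h_{T-t}^\vartheta(y)\,dy
=\frac{h_{T-s}^\vartheta(x)}{h_{T-s}^\vartheta(x)}=1 .
\]

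It then remains to dispose of the degenerate point $x=0$, at which $\mathlarger{\mathsf{d}}_{s,t}^{T,\vartheta}(0,\cdot)$ is defined as the $x\to0$ limit of the right-hand side of~\eqref{FirstTrans}. Both identities should follow by letting $x\to0$ in the versions just established for $x\ne 0$: the normalization passes through at once by continuity, and the only step demanding genuine care --- the one I expect to be the main obstacle, modest as it is --- is exchanging the limit $x\to0$ with the $z$-integral in the Chapman--Kolmogorov relation. I would justify this by a local uniform integrability estimate in $z$ for the family $\{\mathlarger{\mathsf{d}}_{s,u}^{T,\vartheta}(x,z)\}_{|x|\le 1}$, obtained from the $|x|\downarrow0$ asymptotics~\eqref{hSmallaAsympt} of $\bar h^\vartheta$ together with the explicit near-origin form~\eqref{DefFullKer} of $\mathsf{K}^\vartheta$; alternatively, this interchange is already subsumed by the standing analytic conventions of Convention~\ref{ConventionAnalytic}. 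Everything else is bookkeeping.
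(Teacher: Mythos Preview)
Your argument is exactly the one the paper has in mind: it explicitly states that the lemma follows from the semigroup property~\eqref{SemigroupP} of $\{\mathsf{K}_t^\vartheta\}$ and the space-time harmonicity~\eqref{hspaceTimeHarmonic} of $h^\vartheta$, and omits the details. Your additional care with the $x=0$ case goes slightly beyond what the paper records, but is consistent with the definition of $\mathlarger{\mathsf d}_{s,t}^{T,\vartheta}(0,\cdot)$ as a limit and with Convention~\ref{ConventionAnalytic}.
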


\subsection{Markov family} \label{MarkoveFamily}

We now construct, for a fixed pre--admissible driving family $h^\vartheta$ and the associated Doob--transformed transition densities $\{\mathlarger{\mathsf d}_{s,t}^{T,\vartheta}\}_{0\le s<t\le T}$ introduced above, the corresponding finite-horizon Markov family on the path space $\boldsymbol{\Upsilon} := C([0,T];\R^2)$ of continuous $\R^2$--valued paths on the finite time interval $[0,T]$. Endowed with the metric
\[
d(p,p')
 \,:= \,
\sum_{n=1}^{\infty} \,\frac{1\wedge d_n(p,p')}{2^n} \,,
\qquad
d_n(p,p')
 \,:= \,
\sup_{t\in[0,n\wedge T]}|p(t) \,- \,p'(t)| \,,
\]
the space $\boldsymbol{\Upsilon}$ is Polish. We write
$\mathcal B(\boldsymbol{\Upsilon})$ for the associated Borel $\sigma$--algebra,
denote by $X=\{X_t\}_{t\in[0,T]}$ the canonical coordinate process defined by
$X_t(p):=p(t)$, and let $\{\mathcal{F}_t\}_{t\in[0,T]}$ be the filtration generated
by $X$.


For $t>0$ and $x\in\R^2\setminus\{0\}$, define the map
$\mathsf{H}_t^\vartheta:\R^2\setminus\{0\}\to\R^2$ by
\[
\mathsf{H}_t^\vartheta(x)
 \,:= \,
\frac{(\hat h_0^\vartheta * g_t)(x)}{h_t^\vartheta(x)} \,,
\qquad\text{where}\qquad
\hat h_0^\vartheta(x) \,:= \,x\,h_0^\vartheta(x) \,.
\]
Since $h_t^\vartheta(x)\to\infty$ as $x\to0$ due to the logarithmic singularity
\eqref{hSmallaAsympt}, while $(\hat h_0^\vartheta*g_t)(x)$ is finite for every
fixed $t>0$, it follows that $\mathsf{H}_t^\vartheta(x)\to0$ as $x\to0$; we therefore
set $\mathsf{H}_t^\vartheta(0):=0$. Using the identity
$\nabla_x g_t(x-y)=-t^{-1}(x-y)g_t(x-y)$,
it follows that
\(
\nabla_x(h_0^\vartheta * g_t)(x)
=
- t^{-1}\!\int_{\R^2}(x-y)\,h_0^\vartheta(y)\,g_t(x-y)\,dy,
\)
and hence a simple rearrangement yields the equivalent representation
\begin{align}\label{DefUpsilonD}
\mathsf{H}_t^\vartheta(x)
\,=\,
\frac{(h_0^\vartheta * g_t)(x)}{h_t^\vartheta(x)}\,x
\,+\,
t\,\frac{\nabla_x(h_0^\vartheta * g_t)(x)}{h_t^\vartheta(x)} \,.
\end{align}
Moreover, $\mathsf{H}_t^\vartheta$ is radially symmetric as a vector field, and its
radial profile $\bar{\mathsf{H}}_t^\vartheta:(0,\infty)\to\R$ is given by
\begin{align}\label{DefUpsilonDRadial}
\bar{\mathsf{H}}_t^\vartheta(|x|)
 \,= \,
\frac{(h_0^\vartheta*g_t)(x)}{h_t^\vartheta(x)}
 \,+ \,
t\,\frac{x\cdot\nabla_x(h_0^\vartheta*g_t)(x)}
{|x|^2\,h_t^\vartheta(x)} \,,
\qquad x\neq0  \,.
\end{align}
The family $\mathsf{H}^\vartheta=\{\mathsf{H}_t^\vartheta\}_{t \in [0,T]}$ admits a well-defined Jacobian on
$\R^2\setminus\{0\}$ with a simple radial-tangential structure
(see~\eqref{JacobianRepresentation}) and uniform eigenvalue bounds, which will be
used repeatedly in the sequel to control quadratic variations. A direct computation using Lemma~\ref{LemmaSpaceTimeHarmonicHath}  shows that $\{\mathsf{H}_t^\vartheta\}_{t \in [0,T]}$ is space-time harmonic
with respect to the family of transformed kernels
$\mathlarger{\mathsf d}^{T,\vartheta}
=\{\mathlarger{\mathsf d}_{s,t}^{T,\vartheta}\}_{0\le s<t\le T}$ defined in~\eqref{FirstTrans}, in the sense that
\begin{align}\label{SpaceTimeharmonicD}
\int_{\R^2} \mathlarger{\mathsf{d}}_{s,t}^{T,\vartheta}(x,y)\,\mathsf{H}_{T-t}^\vartheta(y)\,dy
 \,= \,
\mathsf{H}_{T-s}^\vartheta(x) \,,\qquad 0\le s<t\le T \,, x\in\R^2 \,.    
\end{align}

The next lemma which we prove in Section~\ref{LemmaKolmogorovDProof} is used to obtain the moment estimate required for
Kolmogorov's continuity criterion for the process $Y_t^{T, \vartheta} := \mathsf{H}_{T-t}^{\vartheta}(X_t)$.
\begin{lemma} \label{LemmaKolmogorovD}
Fix $T>0$ and $\vartheta>0$. For every $L>0$ there exists a constant $\mathbf{c}_L>0$
such that for all $x\in\R^2$, $m\in\mathbb{N}_0$, $0\le s<t\le T$, and
$\vartheta T\le L$,
\begin{align*}
    \int_{\R^2}  \,
       \mathlarger{\mathsf{d}}_{s,t}^{T,\vartheta}(x,y)\,
       \big|
           \mathsf{H}_{T-t}^{\vartheta}(y)
            \,-  \, \mathsf{H}_{T-s}^{\vartheta}(x)
       \big|^{2m}\,
    dy
    \,\le\,
    \mathbf{c}_L^{\,m}\,(m!)^2\,(t-s)^m \,.
\end{align*}
\end{lemma}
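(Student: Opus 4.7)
The plan is to exploit the fact that, by the space-time harmonicity \eqref{SpaceTimeharmonicD}, the vector field $\mathsf H^\vartheta_{T-\cdot}$ behaves like a martingale along the Markov kernel $\mathsf d^{T,\vartheta}$. A carr\'e-du-champ identity, combined with the uniform eigenvalue bound on the Jacobian of $\mathsf H^\vartheta$, will yield a first-order differential inequality for the $2m$-th conditional moment of the increment, and the stated bound will follow by induction on $m$.

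More concretely, fix $s\in[0,T)$, $x\in\R^2\setminus\{0\}$, write $c:=\mathsf H_{T-s}^\vartheta(x)$, and set $\phi(u,z):=\mathsf H_{T-u}^\vartheta(z)$. Differentiating the harmonicity identity \eqref{SpaceTimeharmonicD} in $t$ componentwise, inserting the forward Kolmogorov equation \eqref{KolmogorovForD} for $\mathsf d^{T,\vartheta}$, and integrating by parts (justified by Convention~\ref{ConventionAnalytic} in conjunction with \eqref{hSmallaAsympt}--\eqref{hInfinityAsympt}) one first obtains the backward-in-$u$ PDE $(\partial_u+\mathscr L_u)\phi\equiv 0$ on $(\R^2\setminus\{0\})\times(0,T)$, where $\mathscr L_u:=\tfrac12\Delta+b^{\vartheta,h}_{T-u}\cdot\nabla$. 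Define then
\[
f_m(u)\;:=\;\int_{\R^2}\mathsf{d}_{s,u}^{T,\vartheta}(x,z)\,|\phi(u,z)-c|^{2m}\,dz,\qquad u\in(s,T].
\]
The same forward-equation-plus-integration-by-parts argument applied to $\partial_u f_m$ gives $f_m'(u)=\int \mathsf{d}_{s,u}^{T,\vartheta}(x,z)\,(\partial_u+\mathscr L_u)|\phi-c|^{2m}\,dz$, and the chain rule applied to $\Phi(w):=|w|^{2m}$, combined with the PDE for $\phi$, kills the $\nabla\Phi\cdot(\partial_u+\mathscr L_u)\phi$ term and leaves the pure carr\'e-du-champ contribution
\[
(\partial_u+\mathscr L_u)|\phi-c|^{2m}\;=\;\tfrac12\operatorname{tr}\!\bigl((\nabla\phi)^{T}\,\nabla^2\Phi(\phi-c)\,\nabla\phi\bigr).
\]
Inserting $\nabla^2\Phi(w)=2m(2m-2)|w|^{2m-4}ww^{T}+2m|w|^{2m-2}I$ (for $m\ge 1$) and applying Cauchy--Schwarz in the form $|(\nabla\phi)^{T}(\phi-c)|\le\|\nabla\phi\|_{\mathrm{HS}}|\phi-c|$ simplifies this to $(\partial_u+\mathscr L_u)|\phi-c|^{2m}\le m(2m-1)\,\|\nabla\phi\|_{\mathrm{HS}}^2\,|\phi-c|^{2m-2}$.

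The radial-tangential decomposition of $\nabla\mathsf H_t^\vartheta$ alluded to in the paragraph after \eqref{DefUpsilonDRadial}, with its uniform eigenvalue bounds, produces a constant $K=K(L)>0$ such that $\|\nabla\mathsf H_t^\vartheta(z)\|_{\mathrm{HS}}^2\le K^2$ for all $z\neq 0$, $t\in[0,T]$, and $\vartheta T\le L$. Combined with the above this yields $f_m'(u)\le m(2m-1)K^2\,f_{m-1}(u)$ for $m\ge 1$. Since $f_0\equiv 1$, an induction on $m$ (integrating from $s$ to $t$ and using $f_m(s)=0$) gives $f_m(t)\le (2m-1)K^2\,\mathbf c_L^{m-1}((m-1)!)^2(t-s)^m$, which is bounded by $\mathbf c_L^m(m!)^2(t-s)^m$ upon setting $\mathbf c_L:=2K^2$, because $(2m-1)/m^2\le 2$ for every $m\ge 1$. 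The $m=0$ case is trivial, and the case $x=0$ is recovered by letting $x\to 0$ in the resulting bound, using continuity of $\mathsf d^{T,\vartheta}$ and of $\mathsf H_t^\vartheta$ (with $\mathsf H_t^\vartheta(0)=0$).

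The main obstacle is making rigorous the two integrations by parts used above. Both $\mathsf d^{T,\vartheta}$ and the drift $b^{\vartheta,h}$ carry logarithmic singularities at the origin (cf.\ \eqref{Bee}), while $\phi$ and its derivatives are controlled at infinity only through the envelope \eqref{hInfinityAsympt}; these must conspire so that the boundary terms on $\partial B(0,\varepsilon)$ as $\varepsilon\downarrow 0$ and at infinity both vanish. Convention~\ref{ConventionAnalytic} is precisely the shorthand absorbing these boundary-term considerations, the cancellations near the origin stemming from the vanishing of $\mathsf H^\vartheta$ there (a consequence of the logarithmic blow-up \eqref{hSmallaAsympt} of $h_t^\vartheta$). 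A parallel analytic input is the uniform Hilbert--Schmidt bound on $\nabla\mathsf H^\vartheta$, whose proof requires sharp asymptotic analysis of the ratio $(\hat h_0^\vartheta * g_t)/h_t^\vartheta$ and of its gradient both near $0$ and at infinity.
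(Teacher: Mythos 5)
Your proof is correct and follows essentially the same strategy as the paper's: fix the reference point $c=\mathsf H_{T-s}^\vartheta(x)$, differentiate the $2m$-th conditional moment in the time variable, convert the forward Kolmogorov equation into a backward (carré-du-champ) identity via Convention~\ref{ConventionAnalytic}, bound the resulting second-order term using the uniform bound on the Jacobian of $\mathsf H^\vartheta$, and close the recursion by induction on $m$. The one place where the execution differs is in the treatment of the carré-du-champ term. The paper (Lemma~\ref{LemmaFlowPDEBullets}(ii)) exploits the fact that the Jacobian is a symmetric combination $\lambda\, P_x + \lambda^\perp(I-P_x)$ of orthogonal projections: convexity of $z\mapsto|z-c|^{2m}$ makes both directional second derivatives nonnegative, so the eigenvalue bound collapses the expression directly to $\tfrac{C_L^2}{2}\Delta_z G_m=2C_L^2 m^2 G_{m-1}$, giving the recursion constant $\mathbf c_L=2C_L^2$ immediately. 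You instead expand the full Hessian of $\Phi(w)=|w|^{2m}$ into its rank-one and scalar parts and apply Cauchy--Schwarz with the Hilbert--Schmidt norm of $\nabla\phi$, arriving at the factor $m(2m-1)\|\nabla\phi\|_{\mathrm{HS}}^2\le 2m^2\,\|\nabla\phi\|_{\mathrm{HS}}^2$, which costs you the extra step $(2m-1)/m^2\le 2$ and a slightly larger constant (since $\|\nabla\phi\|_{\mathrm{HS}}^2=\lambda^2+(\lambda^\perp)^2\le 2C_L^2$). Both routes rest on exactly the same inputs — the eigenvalue bound~\eqref{EigenvalueBoundD}, the space-time harmonicity~\eqref{SpaceTimeharmonicD}, and Convention~\ref{ConventionAnalytic} — and both yield a valid choice of $\mathbf c_L$; the paper's version is marginally cleaner because it uses the radial-tangential structure to reduce to an exact Laplacian computation $\Delta_z G_m=4m^2G_{m-1}$, whereas your version is more generic and would apply without the spectral decomposition of the Jacobian.
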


To obtain a path measure under which the coordinate process $X$ has continuous
sample paths, we must transfer the regularity established for the transformed
process $Y^{T,\vartheta}$ back to $X$. This requires the transformation maps
$\mathsf{H}_t^\vartheta$ to admit continuous, locally Lipschitz inverses, uniformly
over finite time horizons. We record this requirement in
Hypothesis~\ref{H2} below. More generally, to keep the presentation flexible and
to separate structural considerations from model-specific analytic estimates,
we introduce such hypotheses only when they become relevant. These assumptions
encode quantitative regularity properties that are not automatic consequences
of the abstract framework and must be verified in each concrete setting.

\begin{hypothesis}\label{H2}
Fix $T>0$ and $\vartheta>0$.
For each $t\in[0,T]$, the map
$\mathsf{H}_t^\vartheta:\R^2\setminus\{0\}\to\R^2\setminus\{0\}$ is a bijection with inverse $\mathsf{G}_t^\vartheta$. The families
\[
(t,x) \,\mapsto  \, \mathsf{H}_t^\vartheta(x)
\quad\text{and}\quad
(t,y) \,\mapsto  \,\mathsf{G}_t^\vartheta(y)
\]
are jointly continuous on $[0,T]\times(\R^2\setminus\{0\})$, and for every compact
set $K\subset\R^2\setminus\{0\}$ there exists $L_K>0$ such that
\[
|\mathsf{G}_t^\vartheta(y_1) \,- \,\mathsf{G}_t^\vartheta(y_2)|
 \,\le  \, L_K|y_1 \,- \,y_2| \,,
\qquad y_1,y_2\in K,\ t\in[0,T] \,.
\]
Moreover, defining $\mathsf{H}_t^\vartheta(0):=0$, the map
$(t,x)\mapsto\mathsf{H}_t^\vartheta(x)$ extends jointly continuously to
$(0,T]\times\R^2$.
\end{hypothesis}

The proof of the next proposition is in Section~\ref{PropCPDProof}.

\begin{proposition}\label{PropCPD}
Fix $T,\vartheta>0$ and $x\in\R^2\setminus \{0\}$, and assume Hypothesis~\ref{H2}.
There exists a unique probability measure $\mathbb{P}^{T,\vartheta}_{x}$
on $\big(\boldsymbol{\Upsilon},\mathcal{B}(\boldsymbol{\Upsilon})\big)$, where
$\boldsymbol{\Upsilon}:=C([0,T];\R^2)$, under which the coordinate process
$\{X_t\}_{t\in[0,T]}$ has initial distribution $\delta_x$ and is Markov with
transition density function $\mathlarger{\mathsf{d}}_{s,t}^{T,\vartheta}$
with respect to the filtration $\{\mathcal{F}_t\}_{t\in[0,T]}$. Moreover, the family $\{\mathbb{P}^{T,\vartheta}_{x}\}_{(x,\vartheta,T)}$
depends continuously on $(x,\vartheta,T)\in\R^2\times(0,\infty)\times(0,\infty)$
with respect to the weak topology on probability measures on $\boldsymbol{\Upsilon}$.
Finally, as $\vartheta\searrow 0$, $\mathbb{P}^{T,\vartheta}_{x}$
converges weakly to the Wiener measure $\mathbb{P}_{x}$ on $\boldsymbol{\Upsilon}$.
\end{proposition}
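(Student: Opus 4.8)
The plan is to build $\mathbb{P}^{T,\vartheta}_{x}$ in three stages: first construct a Markov process with the prescribed finite-dimensional distributions by Kolmogorov's extension theorem; then upgrade it to one with continuous sample paths by transferring a Kolmogorov--Chentsov moment bound from the auxiliary process $Y_{t}:=\mathsf{H}_{T-t}^{\vartheta}(X_{t})$ (for which Lemma~\ref{LemmaKolmogorovD} is tailor-made) back to $X$; and finally deduce uniqueness, joint continuity in the parameters, and the small-$\vartheta$ limit from convergence of finite-dimensional distributions together with tightness. For the first stage, Lemma~\ref{LemTranKern} gives the Chapman--Kolmogorov identity and normalization of $\{\mathlarger{\mathsf{d}}_{s,t}^{T,\vartheta}\}_{0\le s<t\le T}$, so the finite-dimensional distributions obtained by chaining these densities (starting from $X_{0}=x$) are consistent; since $\R^{2}$ is Polish, Kolmogorov's theorem yields a probability measure $\widetilde{\mathbb{P}}$ on $(\R^{2})^{[0,T]}$ under which the coordinate process is Markov with transition densities $\mathlarger{\mathsf{d}}_{s,t}^{T,\vartheta}$ and $X_{0}=x$. (Note $X_{s}\neq 0$ $\widetilde{\mathbb{P}}$-a.s.\ for each fixed $s$, although the not-yet-continuous paths may reach the origin at random times.)

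For the second stage, put $Y_{t}:=\mathsf{H}_{T-t}^{\vartheta}(X_{t})$ with $\mathsf{H}^{\vartheta}$ as in~\eqref{DefUpsilonD}, so that $\mathsf{H}_{0}^{\vartheta}=\mathrm{id}$ and $\mathsf{H}_{t}^{\vartheta}(0)=0$. Evaluating Lemma~\ref{LemmaKolmogorovD} at the point $x=X_{s}$ and using the Markov property,
\[
\E\big[\,|Y_{t}-Y_{s}|^{2m}\,\big|\,\mathcal{F}_{s}\,\big]
\,=\,
\int_{\R^{2}}\mathlarger{\mathsf{d}}_{s,t}^{T,\vartheta}(X_{s},y)\,\big|\mathsf{H}_{T-t}^{\vartheta}(y)-\mathsf{H}_{T-s}^{\vartheta}(X_{s})\big|^{2m}\,dy
\,\le\,
\mathbf{c}_{L}^{\,m}\,(m!)^{2}\,(t-s)^{m}
\]
for $\vartheta T\le L$, hence $\E\big[|Y_{t}-Y_{s}|^{2m}\big]\le \mathbf{c}_{L}^{\,m}(m!)^{2}(t-s)^{m}$ for every $m\in\N$. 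Taking, say, $m=2$ and applying the Kolmogorov--Chentsov criterion furnishes a modification $\widetilde{Y}$ of $Y$ with continuous (in fact locally H\"older) sample paths. Under Hypothesis~\ref{H2}, each extended map $\mathsf{H}_{t}^{\vartheta}:\R^{2}\to\R^{2}$ is a continuous bijection fixing the origin, hence a homeomorphism by invariance of domain, so its inverse $\mathsf{G}_{t}^{\vartheta}$ is continuous on all of $\R^{2}$ and $(t,y)\mapsto\mathsf{G}_{t}^{\vartheta}(y)$ is jointly continuous on $[0,T]\times\R^{2}$ (continuity at points $(t_{0},0)$ following from the fiberwise homeomorphism property together with the joint continuity and properness of $\mathsf{H}^{\vartheta}$, the latter because $\mathsf{H}_{t}^{\vartheta}(x)\sim x$ as $|x|\to\infty$ by~\eqref{hInfinityAsympt}). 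Consequently $\widetilde{X}_{t}:=\mathsf{G}_{T-t}^{\vartheta}(\widetilde{Y}_{t})$ has continuous paths; since $X_{t}=\mathsf{G}_{T-t}^{\vartheta}(Y_{t})$ identically and $\widetilde{Y}_{t}=Y_{t}$ $\widetilde{\mathbb{P}}$-a.s.\ for each $t$, the process $\widetilde{X}$ is a modification of $X$, and pushing $\widetilde{\mathbb{P}}$ forward along the measurable map $\omega\mapsto\widetilde{X}(\omega)\in\boldsymbol{\Upsilon}=C([0,T];\R^{2})$ defines $\mathbb{P}^{T,\vartheta}_{x}$, with initial law $\delta_{x}$ and transition densities $\mathlarger{\mathsf{d}}_{s,t}^{T,\vartheta}$.

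Uniqueness is immediate, since a probability measure on the Polish space $\boldsymbol{\Upsilon}$ is determined by its cylinder-set marginals, which are fixed by $\delta_{x}$ and $\{\mathlarger{\mathsf{d}}_{s,t}^{T,\vartheta}\}$. For continuity in $(x,\vartheta,T)$, let $(x_{n},\vartheta_{n},T_{n})\to(x,\vartheta,T)$ and, extending paths past their horizons by constancy, view all laws as measures on a common path space. Then (a) the finite-dimensional distributions converge, because $\mathlarger{\mathsf{d}}_{s,t}^{T,\vartheta}(x,y)$ depends jointly continuously on all of its arguments (from the joint continuity of $\mathsf{K}_{t}^{\vartheta}$, of $h_{t}^{\vartheta}$ in Definition~\ref{SpaceTimeHEFDef}(iv), and of $\mathsf{H}_{t}^{\vartheta}$), together with a dominated-convergence argument when integrating against bounded continuous functionals; and (b) the family $\{\mathbb{P}^{T_{n},\vartheta_{n}}_{x_{n}}\}$ is tight, because for large $n$ the constant $\mathbf{c}_{L}$ in the increment estimate of the second stage can be chosen once with $\vartheta_{n}T_{n}\le L$, giving a uniform Kolmogorov modulus bound for $\{Y^{T_{n},\vartheta_{n}}\}$ which, transferred through the locally uniformly convergent inverses $\mathsf{G}^{\vartheta_{n}}$, yields tightness of $\{X^{T_{n},\vartheta_{n}}\}$. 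Convergence of finite-dimensional distributions plus tightness gives weak convergence, and the limit coincides with $\mathbb{P}^{T,\vartheta}_{x}$ by uniqueness. The $\vartheta\downarrow0$ assertion is the same argument: the interaction term in~\eqref{DefFullKer} carries a vanishing prefactor while the driving ratios $h_{T-t}^{\vartheta}(y)/h_{T-s}^{\vartheta}(x)$ tend to $1$, so $\mathlarger{\mathsf{d}}_{s,t}^{T,\vartheta}(x,y)\to g_{t-s}(x-y)$ and the finite-dimensional distributions converge to those of planar Brownian motion, while tightness again follows from Lemma~\ref{LemmaKolmogorovD} with a fixed $L\ge\vartheta T$; hence $\mathbb{P}^{T,\vartheta}_{x}$ converges weakly to the Wiener measure $\mathbb{P}_{x}$.

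The main obstacle is the passage between $X$ and $Y$ across the origin in the second stage. Away from the origin the change of variables $\mathsf{H}^{\vartheta}$ and the associated estimates are routine, but $\mathlarger{\mathsf{d}}_{s,t}^{T,\vartheta}(x,\cdot)$ degenerates as $x\to0$, so path continuity of $X$ at the (random) times when it visits the origin---and, later, the uniform tightness transfer---can only be extracted indirectly, through the moment bound for $Y$ and the global-homeomorphism input of Hypothesis~\ref{H2}. Making the tightness estimate uniform over a convergent sequence of parameters, rather than at a single parameter value, is the most delicate point of the third stage.
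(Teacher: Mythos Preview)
Your proof follows essentially the same three-stage route as the paper: Kolmogorov extension from the consistent transition densities of Lemma~\ref{LemTranKern}, the Kolmogorov--Chentsov criterion applied to $Y_t=\mathsf{H}_{T-t}^{\vartheta}(X_t)$ via Lemma~\ref{LemmaKolmogorovD}, transfer back to $X$ through the inverses $\mathsf{G}^{\vartheta}$ of Hypothesis~\ref{H2}, and finally finite-dimensional convergence plus uniform-in-parameter tightness for the continuity and $\vartheta\downarrow0$ statements. If anything you are slightly more careful than the paper about extending $\mathsf{G}_t^{\vartheta}$ continuously through the origin (your invariance-of-domain remark), a point the paper handles only implicitly; the properness claim $\mathsf{H}_t^{\vartheta}(x)\sim x$ you invoke is plausible but not an immediate consequence of~\eqref{hInfinityAsympt} alone, though it is not strictly needed once one argues, as you do, pathwise on the compact range of the continuous process $\widetilde{Y}$.
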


Observe that the law $\mathbb{P}_{x}^{T,\vartheta}$ in the above proposition depends on the choice of the driving family
$h^{\vartheta}$, which is fixed throughout the sequel; as mentioned above, this
dependence is therefore suppressed in the notation. The following remark
describes the precise relationship between the corresponding path measures
arising from different choices of driving families.

\begin{remark} \label{RemarkRNDDoobTransformRelationship}
Fix $T,\vartheta>0$ and $x\in\R^2 \setminus \{0\}$.
Let $h^\vartheta=\{h_t^\vartheta\}_{t\in[0,T]}$ and
$\hbar^{\vartheta}=\{\hbar_t^{\vartheta}\}_{t\in[0,T]}$
be two families satisfying Definition~\ref{SpaceTimeHEFDef}, and denote by
$\mathbb{P}^{T,\vartheta,h}_x$ and $\mathbb{P}^{T,\vartheta,\hbar}_x$
the corresponding path measures on $\boldsymbol{\Upsilon}$.
Then the measures $\mathbb{P}^{T,\vartheta,h}_x$ and
$\mathbb{P}^{T,\vartheta,\hbar}_x$ are mutually absolutely continuous on
$\mathcal{F}_T$, and their Radon-Nikodym derivative is given by
\begin{align}\label{RNDDoobTransformRelationship}
\frac{d\mathbb{P}^{T,\vartheta,h}_x}{d\mathbb{P}^{T,\vartheta,\hbar}_x}
\bigg|_{\mathcal{F}_T}
\, = \,
\frac{h_0^\vartheta(X_T)}{\hbar_0^\vartheta(X_T)}\,
\frac{\hbar_T^\vartheta(x)}{h_T^\vartheta(x)}\, .
\end{align}
\end{remark}

\subsection{Stochastic dynamics} \label{SubsectionStochdDiff}

We now describe the stochastic dynamics associated with the finite-horizon Markov family $\{\mathbb{P}^{T, \vartheta}_{x}\}_{x\in\R^2}$ constructed above and formulate conditions under which the coordinate process admits a weak SDE representation.

Given a Borel probability measure $\mu$ on $\R^2$, we define the law
$\mathbb{P}^{T,\vartheta}_{\mu}$ with initial distribution $\mu$ by
$\mathbb{P}^{T,\vartheta}_{\mu}
:= \int_{\R^2} \mathbb{P}^{T,\vartheta}_{x}\,\mu(dx)$, and we denote by
$\mathbb{E}^{T,\vartheta}_{\mu}$ the expectation with respect to
$\mathbb{P}^{T,\vartheta}_{\mu}$.
As a preliminary step toward constructing a weak solution to the SDE associated
with the path measure $\mathbb{P}^{T,\vartheta}_{\mu}$, we introduce the usual
augmentation of the underlying $\sigma$-algebras.
For fixed $T,\vartheta>0$ and a Borel probability measure $\mu$ on $\R^2$, let
$\mathcal{N}^{T,\vartheta}_{\mu}$ denote the collection of all subsets of
$\mathbb{P}^{T,\vartheta}_{\mu}$-null sets. We then define the augmented
$\sigma$-algebras
\begin{align}\label{AugmentedD}
\mathcal{F}_t^{T,\vartheta,\mu}
\,:=\,
\sigma\!\left( \mathcal{F}_t \,\cup\, \mathcal{N}^{T,\vartheta}_{\mu} \right)
\hspace{.5cm}\text{and}\hspace{.5cm}
\mathcal{B}^{T,\vartheta}_{\mu}
\,:=\,
\sigma\!\left( \mathcal{B}(\boldsymbol{\Upsilon}) \,\cup\, \mathcal{N}^{T,\vartheta}_{\mu} \right) \,.
\end{align}
The probability measure $\mathbb{P}^{T,\vartheta}_{\mu}$ extends uniquely to the
augmented $\sigma$-algebra $\mathcal{B}^{T,\vartheta}_{\mu}$, and we use the same
notation for this extension. In the special case $\mu=\delta_x$, we adopt the
simplified notations
\[
\mathcal{F}_t^{T,\vartheta,x}  \,:=  \,\mathcal{F}_t^{T,\vartheta,\mu} \,, 
\qquad
\mathcal{B}^{T,\vartheta}_{x} \, :=  \,\mathcal{B}^{T,\vartheta}_{\mu} \,,
\qquad
\mathbb{P}^{T,\vartheta}_{x}  \,:=  \,\mathbb{P}^{T,\vartheta}_{\mu} \,.
\]
The augmented filtration
$\mathcal{F}^{T,\vartheta,\mu}
:= \{\mathcal{F}_t^{T,\vartheta,\mu}\}_{t\in[0,T]}$
is right-continuous and complete, and
$\{\mathbb{P}^{T,\vartheta}_{x}\}_{x\in\R^2}$ remains Markov with respect to $\mathcal{F}^{T,\vartheta,\mu}$; see \cite[Section~2.7(A-B)]{Karatzas}. For fixed $\varepsilon>0$  let $\{\rho_{n}^{\varepsilon\downarrow 0}\}_{n\in \mathbb{N}}$ and $\{\rho_{n}^{0\uparrow\varepsilon}\}_{n\in \mathbb{N}_0}$ be the  sequences of stopping times such that $\rho_{0}^{0\uparrow\varepsilon}=0$ and for $n\in \mathbb{N}$
\begin{align}\label{VARRHOSD}
\rho_{n}^{\varepsilon\downarrow 0}
\,:=\,\inf\big\{s\in(\rho_{n-1}^{0\uparrow\varepsilon},T] \,:\, X_s=0\big\}\,, \quad \textup{ and } \quad
\rho_{n}^{0\uparrow\varepsilon}
\,:=\,\inf\big\{s\in(\rho_{n}^{\varepsilon\downarrow 0},T] \,:\, |X_s|=\varepsilon\big\}\,,
\end{align}
where we interpret $\inf \emptyset =\infty$.  Moreover, for $t\geq 0$ let $N_t^\varepsilon
:=
\sum_{n\ge1}\mathbf{1}_{\{\rho_{n}^{\varepsilon\downarrow 0}\le t\}}$ denote the number of indices $n\ge1$ such that
$\rho_{n}^{\varepsilon\downarrow 0}\in[0,t]$.

\begin{hypothesis}\label{H3}
Fix $T,\vartheta>0$. The following conditions hold.
\begin{enumerate}[(i)]
\item 
For every Borel probability measure $\mu$ on $\R^2$,
\begin{align}
\mathbb{E}_{\mu}^{T,\vartheta}\!\bigg[
\int_0^T \big| b_{T-s}^{\vartheta}(X_s) \big|\,ds
\bigg]
 \,< \,\infty \,. \nonumber
\end{align}

\item
There exist a constant $C=C(T,\vartheta)>0$ and functions
$\psi,\eta:(0,1]\to(0,\infty)$ such that
\begin{align}\label{H3PsiCond}
\lim_{\varepsilon\downarrow 0}\,\varepsilon\,\psi(\varepsilon) \,= \,0 \,,
\qquad\text{and}\qquad
\lim_{\varepsilon\downarrow 0}\,\eta(\varepsilon) \,= \,0 \,,
\end{align}
and for every $\varepsilon\in(0,1]$ and every Borel probability measure $\mu$ on $\R^2$,
\begin{align}
&\mathbb{E}_{\mu}^{T,\vartheta}\!\big[\,N_T^{\varepsilon}\,\big]
\,\le\,
C\,\psi(\varepsilon) \,,
\label{H3Upcross}
\\
&\mathbb{E}_{\mu}^{T,\vartheta}\!\bigg[\int_0^T \mathbf{1}_{\{|X_s|\le \varepsilon\}}\,ds\bigg]
\,+\,
\mathbb{E}_{\mu}^{T,\vartheta}\!\bigg[\int_0^T \mathbf{1}_{\{|X_s|\le \varepsilon\}}\,
\big|b_{T-s}^{\vartheta}(X_s)\big|\,ds\bigg]
\,\le\,
C\,\eta(\varepsilon) \,,
\label{H3OccAndDriftSmallBall}
\end{align}
where $N_T^\varepsilon$ is the number of excursions of $X$ from $0$ to the level
$\{|x|=\varepsilon\}$ defined in~\eqref{VARRHOSD}.
\end{enumerate}
\end{hypothesis}

In the proof of the following proposition, given in
Appendix~\ref{AppendixWeakSolConstD}, no additional It\^o-admissibility assumptions for the
family of inverse maps $\{\mathsf{G}_t^\vartheta\}_{t\in(0,T]}$ are required.
Indeed, on $\{Y_s^{T,\vartheta}\neq0\}=\{X_s\neq0\}$ we have
$Y_s^{T,\vartheta}=\mathsf{H}_{T-s}^\vartheta(X_s)$, and the identities
\eqref{PDEUp1D_corr} and \eqref{PDEUp2D_corr} imply that
\[
\big(\nabla^\dagger \mathsf{G}_{T-s}^\vartheta\big)\big(Y_s^{T,\vartheta}\big)\,
\mathsf J_{T-s}^\vartheta(X_s) \,= \,I_2
\quad\text{and}\quad
\partial_s \mathsf{G}_{T-s}^\vartheta\big(Y_s^{T,\vartheta}\big)
 \,+ \,\frac12\,\Tr_2\!\big[\cdots\big]
 \,= \, b_{T-s}^\vartheta(X_s) \,,
\]
where $\Tr_2[\cdots]$ denotes the trace term appearing in~\eqref{PDEUp2D_corr}
evaluated at $(t,x)=\big(T-s,X_s\big)$ (so that $\mathsf{H}_t^\vartheta(x)=Y_s^{T,\vartheta}$).
As a consequence, the stochastic integral term in It\^o's formula for
$t\mapsto \mathsf{G}_{T-t}^\vartheta(Y_t^{T,\vartheta})$ is automatically
well-defined, while the finite-variation term reduces to
$\int_0^t b_{T-s}^\vartheta(X_s)\,ds$, whose integrability follows from~(i) above.

\begin{proposition}\label{PropStochPreD}
Fix $T,\vartheta>0$ and let $\mu$ be a Borel probability measure on $\R^2$. Assume Hypothesis~\ref{H2}. There exists an $\R^2$-valued adapted process $\{W_t^{T,\vartheta}\}_{t\in[0,T]}$ on the probability space $(\boldsymbol{\Upsilon},\mathcal{B}^{T,\vartheta}_\mu,\mathbb P^{T,\vartheta}_\mu)$ such that $W^{T,\vartheta}$ is a two-dimensional standard Brownian motion with respect to the filtration $\{\mathcal{F}_t^{T,\vartheta,\mu}\}_{t\in[0,T]}$. If, in addition, Hypothesis~\ref{H3} holds, then the coordinate process $X$ is a weak solution, under $\mathbb P^{T,\vartheta}_\mu$, of the SDE~\eqref{SDEToSolve}, with drift field $b_t^\vartheta$ given by
\eqref{DefDriftFunD}.
\end{proposition}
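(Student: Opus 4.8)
The plan is to transfer the SDE representation from the coordinate process $X$, whose drift is singular at the origin, to the linearized process $Y_t^{T,\vartheta}:=\mathsf{H}_{T-t}^{\vartheta}(X_t)$, which carries no drift at all, and then to recover $X$ from $Y^{T,\vartheta}$ by It\^o's formula applied to the inverse maps $\mathsf{G}_t^{\vartheta}$ of Hypothesis~\ref{H2}. We work on $\big(\boldsymbol{\Upsilon},\mathcal{B}^{T,\vartheta}_{\mu},\mathbb{P}^{T,\vartheta}_{\mu}\big)$ with the augmented filtration $\{\mathcal{F}_t^{T,\vartheta,\mu}\}_{t\in[0,T]}$, available by Proposition~\ref{PropCPD}. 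The first step is to check that $Y^{T,\vartheta}$ is a continuous $\R^2$-valued martingale. The martingale property follows from the space-time harmonicity \eqref{SpaceTimeharmonicD} of $\mathsf{H}^{\vartheta}$ with respect to $\mathlarger{\mathsf{d}}^{T,\vartheta}$ and the Markov property of Proposition~\ref{PropCPD}, giving $\mathbb{E}^{T,\vartheta}_{\mu}[\mathsf{H}_{T-t}^{\vartheta}(X_t)\mid\mathcal{F}_s]=\mathsf{H}_{T-s}^{\vartheta}(X_s)$ componentwise (a genuine martingale for $\mu=\delta_x$ with $x\ne0$, and in general after the usual localization). Continuity of $t\mapsto Y_t^{T,\vartheta}$ follows from Kolmogorov's criterion, since Lemma~\ref{LemmaKolmogorovD} with $m=2$ gives $\mathbb{E}^{T,\vartheta}_{\mu}\big[|Y_t^{T,\vartheta}-Y_s^{T,\vartheta}|^{4}\big]\le 4\,\mathbf{c}_L^{\,2}(t-s)^{2}$.

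The second step is to identify the bracket of $Y^{T,\vartheta}$ and construct $W^{T,\vartheta}$. Lemma~\ref{LemmaKolmogorovD} with $m=1$ gives $\mathbb{E}^{T,\vartheta}_{\mu}\big[|Y_t^{T,\vartheta}-Y_s^{T,\vartheta}|^{2}\mid\mathcal{F}_s\big]\le\mathbf{c}_L(t-s)$, which makes $t\mapsto\mathbf{c}_L t-\langle Y^{T,\vartheta,i}\rangle_t$ a continuous submartingale; being of finite variation, its martingale part is constant, so $\langle Y^{T,\vartheta,i}\rangle$ is Lipschitz in $t$, hence absolutely continuous. Its density is read off from the martingale problem solved by $X$ on $\R^2\setminus\{0\}$: the forward equation \eqref{KolmogorovForD} and its dual backward form (which follows from \eqref{DEh} and the heat equation for $\mathsf{K}^{\vartheta}$ off the origin) make $\varphi(X_t)-\varphi(X_0)-\int_0^t\big(\tfrac12\Delta+b_{T-s}^{\vartheta}\!\cdot\!\nabla\big)\varphi(X_s)\,ds$ a martingale for $\varphi\in C_c^2(\R^2\setminus\{0\})$ (Convention~\ref{ConventionAnalytic}); feeding this into It\^o's formula for $\mathsf{H}_{T-t}^{\vartheta}(X_t)$ on intervals where $X$ avoids the origin, and using $\int_0^T\mathbf{1}_{\{X_s=0\}}\,ds=0$ a.s.\ (Fubini and $\mathbb{P}^{T,\vartheta}_{\mu}[X_s=0]=0$), yields $d\langle Y^{T,\vartheta,i},Y^{T,\vartheta,j}\rangle_t=\big[\mathsf{J}_{T-t}^{\vartheta}(X_t)\,\mathsf{J}_{T-t}^{\vartheta}(X_t)^{\dagger}\big]_{ij}\,dt$. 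I would then set $W_t^{T,\vartheta}:=\int_0^t \mathsf{J}_{T-s}^{\vartheta}(X_s)^{-1}\,dY_s^{T,\vartheta}$, with $\mathsf{J}_{T-s}^{\vartheta}(X_s)^{-1}$ read as $I_2$ on the $ds$-null set $\{X_s=0\}$. The key point is that this stochastic integral is well defined despite the logarithmic blowup of $\mathsf{J}^{\vartheta,-1}$ near the origin, because $\langle W^{T,\vartheta,i},W^{T,\vartheta,j}\rangle_t=t\,\delta_{ij}$: that blowup is exactly absorbed by the degeneracy of $d\langle Y^{T,\vartheta}\rangle_s$. L\'evy's characterization then shows $W^{T,\vartheta}$ is a standard two-dimensional Brownian motion for $\{\mathcal{F}_t^{T,\vartheta,\mu}\}$, which proves the first assertion under Hypothesis~\ref{H2} alone.

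Assuming also Hypothesis~\ref{H3}, the third step recovers the SDE on the excursions of $X$ away from the origin. On $\{X_t\ne0\}$ one has $X_t=\mathsf{G}_{T-t}^{\vartheta}(Y_t^{T,\vartheta})$; by Definition~\ref{SpaceTimeHEFDef} and the smoothing property of the Gaussian kernel $g_t$ ($t>0$), $\mathsf{H}_t^{\vartheta}$ is $C^2$ in $x$ and $C^1$ in $t$ on $(0,T]\times(\R^2\setminus\{0\})$ with invertible Jacobian (by its uniform eigenvalue bounds), so the inverse and implicit function theorems make $(t,y)\mapsto\mathsf{G}_{T-t}^{\vartheta}(y)$ of class $C^{1,2}$ there. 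Applying It\^o's formula to $t\mapsto\mathsf{G}_{T-t}^{\vartheta}(Y_t^{T,\vartheta})$ on any stochastic interval on which $X$ stays in a compact $K\subset\R^2\setminus\{0\}$, and using $dY_t^{T,\vartheta}=\mathsf{J}_{T-t}^{\vartheta}(X_t)\,dW_t^{T,\vartheta}$ together with the pointwise identities $(\nabla^{\dagger}\mathsf{G}_{T-t}^{\vartheta})(Y_t^{T,\vartheta})\,\mathsf{J}_{T-t}^{\vartheta}(X_t)=I_2$ and $\partial_t\mathsf{G}_{T-t}^{\vartheta}(Y_t^{T,\vartheta})+\tfrac12\Tr_2\!\big[\cdots\big]=b_{T-t}^{\vartheta}(X_t)$ (these are \eqref{PDEUp1D_corr} and \eqref{PDEUp2D_corr} along the path), the It\^o expansion simplifies before any delicate estimate on $\mathsf{G}$ is needed: the stochastic term collapses to $dW_t^{T,\vartheta}$ and the finite-variation term to $b_{T-t}^{\vartheta}(X_t)\,dt$ (integrable by Hypothesis~\ref{H3}(i)). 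Hence $dX_t=dW_t^{T,\vartheta}+b_{T-t}^{\vartheta}(X_t)\,dt$ on each such interval.

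The main obstacle, and the last step, is to promote this to all of $[0,T]$. Fix $\varepsilon\in(0,1]$ and recall the stopping times $\rho_n^{\varepsilon\downarrow0},\rho_n^{0\uparrow\varepsilon}$ of \eqref{VARRHOSD}. On each outer interval $[\rho_{n-1}^{0\uparrow\varepsilon},\rho_n^{\varepsilon\downarrow0})$ the path avoids the origin, so, after a further localization at the exit times of $\{\delta\le|x|\le1/\delta\}$ and $\delta\downarrow0$ (using continuity of $X$ and $W^{T,\vartheta}$ and the drift integrability of Hypothesis~\ref{H3}(i) via dominated convergence), the increment of $X-W^{T,\vartheta}-\int_0^{\cdot}b_{T-s}^{\vartheta}(X_s)\,ds$ over each closed outer interval vanishes. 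Summing over $n$, the residual $X_t-X_0-W_t^{T,\vartheta}-\int_0^t b_{T-s}^{\vartheta}(X_s)\,ds$ equals the contribution of the inner intervals $[\rho_n^{\varepsilon\downarrow0},\rho_n^{0\uparrow\varepsilon}]$, on which $|X|\le\varepsilon$, and its $L^1(\mathbb{P}^{T,\vartheta}_{\mu})$-norm is at most
\[
\varepsilon\,\Big(1+\mathbb{E}^{T,\vartheta}_{\mu}\big[N_T^{\varepsilon}\big]\Big)
\;+\;
\bigg(2\,\mathbb{E}^{T,\vartheta}_{\mu}\bigg[\int_0^T\mathbf{1}_{\{|X_s|\le\varepsilon\}}\,ds\bigg]\bigg)^{1/2}
\;+\;
\mathbb{E}^{T,\vartheta}_{\mu}\bigg[\int_0^T\mathbf{1}_{\{|X_s|\le\varepsilon\}}\,\big|b_{T-s}^{\vartheta}(X_s)\big|\,ds\bigg],
\]
controlling respectively the displacement of $X$ over the (at most $N_T^{\varepsilon}$) inner intervals, each of size at most $\varepsilon$; the Brownian increments accumulated over the predictable union of inner intervals (It\^o's isometry, the bracket bounded by the occupation time of the $\varepsilon$-ball); and the drift increments there. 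By \eqref{H3Upcross}, \eqref{H3OccAndDriftSmallBall} and \eqref{H3PsiCond} this bound tends to $0$ as $\varepsilon\downarrow0$, and since the residual does not depend on $\varepsilon$, it is $0$ a.s., which is \eqref{SDEToSolve}. I expect this bookkeeping over the a priori infinitely many excursions, keeping the three near-origin estimates in play simultaneously, to be the delicate part of the argument; the construction of $W^{T,\vartheta}$ in the first two steps is, by contrast, a fairly clean consequence of the space-time harmonicity of $\mathsf{H}^{\vartheta}$, the moment bounds of Lemma~\ref{LemmaKolmogorovD}, and L\'evy's theorem.
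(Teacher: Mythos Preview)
Your proposal is correct and follows essentially the same route as the paper: establish that $Y_t^{T,\vartheta}=\mathsf{H}_{T-t}^{\vartheta}(X_t)$ is a continuous martingale with bracket $\mathsf{J}\mathsf{J}^{\dagger}\,dt$ (the paper does this via Lemma~\ref{LemmaMartID} and the backward identity~\eqref{KolmogorovForDBack} with~\eqref{ChainRuleD}, rather than your martingale-problem phrasing), define $W^{T,\vartheta}$ by integrating $\mathsf{J}^{-1}$ against $dY^{T,\vartheta}$ and invoke L\'evy (Corollary~\ref{CorollaryMartID}), apply It\^o to $\mathsf{G}_{T-t}^{\vartheta}(Y_t^{T,\vartheta})$ on the outer excursion intervals using~\eqref{PDEUp1D_corr}--\eqref{PDEUp2D_corr}, and then show the residual $R_t=X_t-X_0-W_t^{T,\vartheta}-\int_0^t b_{T-s}^{\vartheta}(X_s)\,ds$ vanishes by bounding the inner-interval contributions exactly via the three estimates in Hypothesis~\ref{H3}. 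The only cosmetic differences are that the paper assumes $\int|x|^2\,\mu(dx)<\infty$ in Lemma~\ref{LemmaMartID} to get a genuine square-integrable martingale (your localization remark covers this), and that the paper's displacement bound on each inner interval is $2\varepsilon$ rather than your $\varepsilon$.
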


\subsection{Hitting behavior at the origin} \label{SectionOriginEventD}

In this section, we study the behavior of the diffusion under
$\mathbb{P}_x^{T,\vartheta}$ at the origin and analyze the structure induced by visits to the interaction point. Fix $T,\vartheta>0$. For each $t\in[0,T]$, define the function
$\mathsf{p}_t^{\vartheta}:\R^2\to[0,1]$ by
\begin{align}\label{DefPD}
\mathsf{p}_t^{\vartheta}(x)
\,:=\,
\frac{(h_0^{\vartheta}*g_t)(x)}{h_t^{\vartheta}(x)} \, .
\end{align}
The function $\mathsf{p}_t^{\vartheta}$ satisfies the PDE below for $t\geq 0$ 
\begin{align}\label{PartialForPD}
\partial_t\,\mathsf{p}_t^{\vartheta}(x)
\,=\,
\frac{1}{2}\,\Delta \mathsf{p}_t^{\vartheta}(x)
\,+\,
b_t^{\vartheta}(x)\cdot\nabla \mathsf{p}_t^{\vartheta}(x) \,,
\qquad x\in\R^2\setminus\{0\} \,,
\end{align}
where the drift field is given by
$b_t^{\vartheta}(x):=\nabla\log h_t^{\vartheta}(x)$; see~\eqref{DefDriftFunD}. The above  follows from the identities
$\partial_t h_t^{\vartheta}(x)=\frac{1}{2}\Delta h_t^{\vartheta}(x)$ and
$\partial_t (h_0^{\vartheta}*g_t)(x)=\frac{1}{2}\Delta (h_0^{\vartheta}*g_t)(x)$,
which hold on $\R^2\setminus\{0\}$. Moreover, for every $x\neq 0$ and $t>0$, the spatial gradient of
$\mathsf{p}_t^{\vartheta}$ admits the representation
\begin{align}\label{GradPD}
\nabla \mathsf{p}_{t}^{\vartheta}(x)
\,=\,
\mathsf{p}_{t}^{\vartheta}(x)\,\mathring{b}_t^{\vartheta}(x)
\,-\,
\mathsf{p}_{t}^{\vartheta}(x)\,b_t^{\vartheta}(x)\, ,
\end{align}
where $\mathring b_t^{\vartheta}(x):=\nabla\log (h_0^{\vartheta}*g_t)(x)$ is the
drift associated with the conditioned law
$\mathring{\mathbb P}_x^{T,\vartheta}$ appearing in
Theorem~\ref{ThmPropertiesRelativeh}(ii).

\begin{hypothesis}\label{H4}
Fix $T>0$ and $\vartheta>0$.  
The family $\mathsf{p}^\vartheta=\{\mathsf{p}_t^\vartheta\}_{t\in[0,T]}$
defined in~\eqref{DefPD} satisfies the following properties.
\begin{enumerate}[(i)]

\item For every Borel probability measure $\mu$ on $\R^2$,
\begin{align}\label{H4GradL2}
\mathbb{E}_\mu^{T,\vartheta}\!\bigg[
\int_0^T \,
\mathbf{1}_{\{X_s\neq 0\}}\,
\big|\nabla \mathsf{p}_{T-s}^\vartheta(X_s)\big|^2\,ds
\bigg]
 \,< \,\infty  \,.
\end{align}

\item There exists a function $\kappa:(0,1]\to(0,\infty)$ with
$\lim_{\varepsilon\downarrow 0}\kappa(\varepsilon)=0$
such that for every $\varepsilon\in(0,1]$ and every Borel probability measure $\mu$,
\begin{align}\label{H4SmallBallGrad}
\mathbb{E}_\mu^{T,\vartheta}\!\bigg[
\int_0^T \,
\mathbf{1}_{\{|X_s|\le \varepsilon\}}\,
\big|\nabla \mathsf{p}_{T-s}^\vartheta(X_s)\big|^2\,ds
\bigg]
 \,\le  \,\kappa(\varepsilon) \,.
\end{align}

\item The process $\mathbf{1}_{\{s<\tau\}}\,
\nabla\log \mathsf{p}_{T-s}^\vartheta(X_s)$ is progressively measurable and locally square-integrable with respect to
$\mathbb{P}_\mu^{T,\vartheta}$, so that the stochastic integral
\[
\int_0^t \,
\mathbf{1}_{\{s<\tau\}}\,
\nabla\log \mathsf{p}_{T-s}^\vartheta(X_s)\cdot dW_s^{T,\vartheta}
\]
is well defined for all $t\in[0,T]$.

\end{enumerate}
\end{hypothesis}

The proof of the following proposition is in Section~\ref{PropSubMartDProof}.
\begin{proposition}\label{PropSubMartD}
Fix $T,\vartheta>0$ and a Borel probability measure $\mu$ on $\R^2$. Assume Hypotheses~\ref{H2}-\ref{H4}. Let $\mathsf{p}^\vartheta = \{ \mathsf{p}_t^\vartheta \}_{t\in[0,T]}$ be the family defined in~\eqref{DefPD}, and define the process $\mathbf{S}^{T, \vartheta} =\{\mathbf{S}_t^{T, \vartheta}\}_{t\in[0,T]}$ by $\mathbf{S}_t^{T, \vartheta}:= \mathsf{p}_{T-t}^{\vartheta}(X_t) $ for $t\in[0,T]$. Then $\mathbf{S}^{T,\vartheta}$ is a bounded, continuous $\mathbb{P}^{T,\vartheta}_\mu$-submartingale with respect to the filtration $\{\mathcal{F}_t^{T, \vartheta, \mu}\}_{t \in[0,T] }$. Moreover, in the Doob-Meyer decomposition
\[
\mathbf{S}_t^{T,\vartheta}
 \,= \,
\mathbf{M}_t^{T,\vartheta}
 \,+ \,
\mathbf{A}_t^{T,\vartheta} \,,
\qquad t\in[0,T] \,,
\]
the martingale component is given by
\begin{align}\label{Martingale}
    \mathbf{M}_{t}^{T,\vartheta}
\,:=\,
\mathsf{p}_{T}^{\vartheta}(X_0)
\,+\,
\int_0^t \nabla\mathsf{p}_{T-s}^{\vartheta}(X_s)\cdot dW_{s}^{T,\vartheta},
\end{align}
and satisfies $\mathbf{M}^{T,\vartheta}\in L^2\!\big(\mathbb{P}^{T,\vartheta}_\mu\big)$.
The increasing component $\mathbf{A}^{T,\vartheta}$ is constant during excursions
of $X$ away from $0$, in the sense that
\[
\int_0^{\infty}  \,\mathbf{1}_{\{X_t\neq 0\}}\,d\mathbf{A}_t^{T,\vartheta} \,= \,0 \,,
\qquad \mathbb{P}^{T,\vartheta}_\mu\text{-a.s.}
\]
\end{proposition}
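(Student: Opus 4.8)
The plan is to prove the statement in three stages. First I show that $\mathbf S^{T,\vartheta}$ is a bounded continuous submartingale and extract its Doob--Meyer decomposition. Second, applying It\^o's formula on the excursions of $X$ away from the origin, I produce an explicit candidate for the martingale part, namely the stochastic integral in~\eqref{Martingale}. Third --- and this is the main obstacle --- I use the near-origin estimates of Hypotheses~\ref{H3}--\ref{H4} to show that the compensator is nondecreasing and charges only the time set $\{t:X_t=0\}$. Throughout, the SDE representation $dX_t=dW_t^{T,\vartheta}+b_{T-t}^\vartheta(X_t)\,dt$ and the Brownian motion $W^{T,\vartheta}$ are those supplied by Proposition~\ref{PropStochPreD} (valid since Hypotheses~\ref{H2}--\ref{H3} are assumed), and the Markov property is that of Proposition~\ref{PropCPD}.

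For the first stage, note first that $\mathsf p_t^\vartheta$ takes values in $[0,1]$: from~\eqref{DefFullKer} one has the pointwise bound $\mathsf K_a^\vartheta(x,y)\ge g_a(x-y)$ (the double integral is nonnegative), so the instance $s=0$, $t=T$ of~\eqref{hspaceTimeHarmonic} gives $h_a^\vartheta(x)=\int_{\R^2}\mathsf K_a^\vartheta(x,y)h_0^\vartheta(y)\,dy\ge(h_0^\vartheta*g_a)(x)$, whence $0\le\mathsf p_a^\vartheta\le1$; thus $\mathbf S^{T,\vartheta}$ is bounded. Joint continuity of $(t,x)\mapsto\mathsf p_t^\vartheta(x)$ on $[0,T]\times\R^2$, with $\mathsf p_t^\vartheta(0)=0$ for $t\in(0,T]$ (which I verify from~\eqref{hSmallaAsympt} against the finiteness of $h_0^\vartheta*g_t$) and $\mathsf p_0^\vartheta\equiv1$, then yields a.s.\ path-continuity of $t\mapsto\mathbf S_t^{T,\vartheta}=\mathsf p_{T-t}^\vartheta(X_t)$; the only delicate point is $t=T$ on the event $\{X_T=0\}$, which is null since $\mathsf d_{0,T}^{T,\vartheta}(x,\cdot)$ is a probability density (Lemma~\ref{LemTranKern}). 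The submartingale property is a direct computation: for $0\le s<t\le T$, using the Markov property, the factorization $\mathsf d_{s,t}^{T,\vartheta}(x,y)=h_{T-t}^\vartheta(y)\,h_{T-s}^\vartheta(x)^{-1}\mathsf K_{t-s}^\vartheta(x,y)$, the definition~\eqref{DefPD}, the bound $\mathsf K_{t-s}^\vartheta\ge g_{t-s}$ and the Gaussian identity $g_{t-s}*g_{T-t}=g_{T-s}$,
\begin{align*}
\mathbb E_\mu^{T,\vartheta}\!\big[\mathbf S_t^{T,\vartheta}\,\big|\,\mathcal F_s^{T,\vartheta,\mu}\big]
\,=\,\frac{1}{h_{T-s}^\vartheta(X_s)}\int_{\R^2}\mathsf K_{t-s}^\vartheta(X_s,y)\,(h_0^\vartheta*g_{T-t})(y)\,dy
\,\ge\,\frac{(h_0^\vartheta*g_{T-s})(X_s)}{h_{T-s}^\vartheta(X_s)}
\,=\,\mathbf S_s^{T,\vartheta}.
\end{align*}
Being bounded and continuous, $\mathbf S^{T,\vartheta}$ is of class (D), so Doob--Meyer gives $\mathbf S^{T,\vartheta}=\mathbf M^{T,\vartheta}+\mathbf A^{T,\vartheta}$ with $\mathbf M^{T,\vartheta}$ a continuous martingale, $\mathbf A^{T,\vartheta}$ continuous predictable nondecreasing, $\mathbf A_0^{T,\vartheta}=0$ and $\mathbf M_0^{T,\vartheta}=\mathsf p_T^\vartheta(X_0)$.

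For the second stage, put $\mathbf N_t:=\mathsf p_T^\vartheta(X_0)+\int_0^t\mathbf 1_{\{X_s\neq0\}}\nabla\mathsf p_{T-s}^\vartheta(X_s)\cdot dW_s^{T,\vartheta}$, a well-defined $L^2$-martingale by Hypothesis~\ref{H4}(i); since $\{t:X_t=0\}$ is Lebesgue-null a.s.\ (let $\varepsilon\downarrow0$ in~\eqref{H3OccAndDriftSmallBall}), $\mathbf N$ is exactly the process in~\eqref{Martingale}. On any interval contained in the open set $\{t:X_t\neq0\}$, localizing by exit times of $X$ from compact subsets of $\R^2\setminus\{0\}$ (on which $\mathsf p$ is $C^{1,2}$), It\^o's formula for $t\mapsto\mathsf p_{T-t}^\vartheta(X_t)$ together with the SDE and the PDE~\eqref{PartialForPD} (whose term $b_{T-t}^\vartheta\cdot\nabla\mathsf p_{T-t}^\vartheta$ cancels the finite-variation contribution, using Convention~\ref{ConventionAnalytic}) gives $\mathbf S_t^{T,\vartheta}-\mathbf S_s^{T,\vartheta}=\int_s^t\nabla\mathsf p_{T-u}^\vartheta(X_u)\cdot dW_u^{T,\vartheta}=\mathbf N_t-\mathbf N_s$. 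Hence $D:=\mathbf S^{T,\vartheta}-\mathbf N$ is constant on every connected component of $\{t:X_t\neq0\}$ and, by continuity, on its closure; moreover on $\{X_t\neq0\}$ one has $d\langle\mathbf S^{T,\vartheta}\rangle_t=|\nabla\mathsf p_{T-t}^\vartheta(X_t)|^2\,dt=d\langle\mathbf N\rangle_t=d\langle\mathbf S^{T,\vartheta},\mathbf N\rangle_t$.

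For the third stage --- the main obstacle --- since $\mathbf S^{T,\vartheta}$ is a submartingale and $\mathbf N$ a martingale, $D$ is a continuous submartingale with $D_0=0$; write its Doob--Meyer decomposition $D=D^{\mathrm m}+D^{\mathrm a}$. By the bracket identities just recorded, $\langle D\rangle$ grows only on $\{X_t=0\}$, and there $d\langle\mathbf N\rangle_t=0$, hence $d\langle\mathbf S^{T,\vartheta},\mathbf N\rangle_t=0$ by Kunita--Watanabe, so $d\langle D\rangle_t=d\langle\mathbf S^{T,\vartheta}\rangle_t$ on $\{X_t=0\}$. It therefore suffices to show
\[
\int_0^T\mathbf 1_{\{X_t=0\}}\,d\langle\mathbf S^{T,\vartheta}\rangle_t\,=\,0,
\]
i.e.\ that the bracket of $\mathbf S^{T,\vartheta}$ does not accumulate on the null time set where $X$ sits at the origin; given this, $D^{\mathrm m}\equiv0$, so $\mathbf M^{T,\vartheta}=\mathbf N$ (identifying~\eqref{Martingale} and giving $\mathbf M^{T,\vartheta}\in L^2$), $\mathbf A^{T,\vartheta}=D=D^{\mathrm a}$ is nondecreasing, and $\int_0^T\mathbf 1_{\{X_t\neq0\}}\,d\mathbf A_t^{T,\vartheta}=0$ follows from the constancy of $D$ off $\{X_t=0\}$. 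I would prove the displayed identity by a quantitative excursion argument: for $\varepsilon>0$ the stopping times~\eqref{VARRHOSD} split $[0,T]$ into the finitely many (by~\eqref{H3Upcross}) intervals on which $|X_t|\ge\varepsilon$, where the bracket equals $\int|\nabla\mathsf p|^2\,dt$ by the second stage, and the complementary near-origin pieces; on the latter the contribution is controlled, via~\eqref{GradPD}, the PDE, and the excursion count $N_T^\varepsilon$, by $N_T^\varepsilon$ times $\sup_{|x|\le\varepsilon,\,s\le T}\mathsf p_s^\vartheta(x)$ together with $\int_0^T\mathbf 1_{\{|X_s|\le\varepsilon\}}\big(|\nabla\mathsf p_{T-s}^\vartheta(X_s)|^2+|b_{T-s}^\vartheta(X_s)|\big)\,ds$, all of which vanish in $L^1$ as $\varepsilon\downarrow0$ by~\eqref{H3PsiCond}, \eqref{H3Upcross}, \eqref{H3OccAndDriftSmallBall} and Hypothesis~\ref{H4}(ii), so that $\langle\mathbf S^{T,\vartheta}\rangle_T=\int_0^T\mathbf 1_{\{X_t\neq0\}}|\nabla\mathsf p_{T-t}^\vartheta(X_t)|^2\,dt$ in the limit. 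The hard part is precisely this bookkeeping: controlling the near-origin bracket uniformly over the (unboundedly many as $\varepsilon\downarrow0$) excursions across the singular drift, and extracting monotonicity of the limit rather than mere bounded variation --- which is exactly what the quantitative Hypotheses~\ref{H3}--\ref{H4} are designed to furnish, Hypothesis~\ref{H4}(iii) in particular keeping the associated logarithmic stochastic integrals well defined.
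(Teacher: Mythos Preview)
Your first two stages coincide with the paper's proof: the submartingale inequality via $\mathsf{K}^\vartheta_{t-s}\ge g_{t-s}$ and the Gaussian semigroup, continuity via the extension $\mathsf{p}_t^\vartheta(0)=0$, and It\^o's formula on excursion intervals using the PDE~\eqref{PartialForPD} to kill the drift. The divergence is entirely in your third stage, and there the sketched argument has a genuine gap.

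You reduce the identification $\mathbf M^{T,\vartheta}=\mathbf N$ to showing $\int_0^T\mathbf 1_{\{X_t=0\}}\,d\langle\mathbf S^{T,\vartheta}\rangle_t=0$, and propose to prove this by an $\varepsilon$-excursion decomposition. But the near-origin intervals $[\rho_n^{\varepsilon\downarrow0},\rho_n^{0\uparrow\varepsilon}]$ still contain the zero set $\{t:X_t=0\}$ for every $\varepsilon$, and on those intervals the bracket of $\mathbf S$ splits as $\int\mathbf 1_{\{X\neq0\}}|\nabla\mathsf p|^2\,dt$ (controlled by Hypothesis~\ref{H4}(ii)) plus the unknown $\int\mathbf 1_{\{X=0\}}\,d\langle\mathbf S\rangle$ itself. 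The bounds you list --- $N_T^\varepsilon\cdot\sup_{|x|\le\varepsilon}\mathsf p$ and $\int\mathbf 1_{\{|X|\le\varepsilon\}}(|\nabla\mathsf p|^2+|b|)\,ds$ --- control the \emph{values} of $\mathbf S$ and the Lebesgue-absolutely-continuous parts of the dynamics near the origin, but they do not bound the quadratic variation of the abstract Doob--Meyer martingale $\mathbf M$ on a Lebesgue-null set; a priori $\langle\mathbf S\rangle=\langle\mathbf M\rangle$ could have a singular part there. (A clean way to close this would be a martingale-representation argument showing $\langle\mathbf M\rangle$ is absolutely continuous, but that requires establishing the filtration is Brownian, which you do not invoke.)

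The paper sidesteps this entirely. It does \emph{not} first appeal to Doob--Meyer and then identify the martingale part; rather it sets $\mathbf A^{T,\vartheta}:=\mathbf S^{T,\vartheta}-\mathbf N$ (your $D$) and proves monotonicity of this concrete process directly. The key observation is that the near-origin increment of $\mathbf S$ over $[\rho_n^{\varepsilon\downarrow0},\rho_n^{0\uparrow\varepsilon}]$ equals $\mathbf S_{\rho_n^{0\uparrow\varepsilon}}-0\ge0$, since $\mathbf S$ vanishes whenever $X$ hits the origin. Writing $\mathbf S_t=\mathbf M_t^{\varepsilon}+\mathbf A_t^{\varepsilon}$ with $\mathbf M^{\varepsilon}$ the sum of the away-from-origin increments and $\mathbf A^{\varepsilon}$ the sum of these nonnegative near-origin increments, Hypothesis~\ref{H4}(ii) gives $\mathbf M^{\varepsilon}\to\mathbf N$ in $L^2$, hence $\mathbf A^{\varepsilon}\to\mathbf A$ uniformly in $L^2$; since each $\mathbf A^{\varepsilon}$ deviates from being nondecreasing by at most $\sup_{|x|=\varepsilon,\,r\ge\delta}\mathsf p_r^\vartheta(x)\to0$, the limit $\mathbf A$ is nondecreasing. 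This is a statement about \emph{increments} of $D$, which the hypotheses do control, rather than about its bracket.
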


The fact that the increasing component $\mathbf{A}^{T,\vartheta}$ in the above proposition is supported on the set $\{t\in[0,T]:X_t=0\}$ indicates that $\mathbf{S}^{T,\vartheta}$ behaves as a
martingale during excursions of $X$ away from the origin. Although this
structure suggests a connection with local time at the origin, such an interpretation is not pursued here and is not required for the results of this paper.

The proof of the following theorem is in Section~\ref{SubsectionThmSubMARTD}.

\begin{theorem} \label{ThmSubMARTD}
Fix $T,\vartheta>0$ and $x\in\R^2\setminus\{0\}$.
Assume Hypotheses~\ref{H2}-\ref{H4}. Then the assertions \textup{(i)}-\textup{(iv)} of
Theorem~\ref{ThmPropertiesRelativeh} are valid under
$\mathbb{P}_x^{T,\vartheta}$.
\end{theorem}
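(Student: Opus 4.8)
The plan is to read off all four assertions from the submartingale structure of Proposition~\ref{PropSubMartD} and the weak SDE of Proposition~\ref{PropStochPreD}, supplemented by one ``survival identity'' established by a separate It\^o/optional-stopping argument. Write $\mathbf{S}_t:=\mathbf{S}_t^{T,\vartheta}=\mathsf{p}_{T-t}^{\vartheta}(X_t)$ and recall from Proposition~\ref{PropSubMartD} that $\mathbf{S}_t=\mathbf{M}_t^{T,\vartheta}+\mathbf{A}_t^{T,\vartheta}$ with $\mathbf{A}^{T,\vartheta}$ supported on $\{t:X_t=0\}$ and $d\mathbf{M}_t^{T,\vartheta}=\nabla\mathsf{p}_{T-t}^{\vartheta}(X_t)\cdot dW_t^{T,\vartheta}$ by~\eqref{Martingale}. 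Since $\{t:X_t=0\}$ is disjoint from $[0,\tau)$, the stopped process $\{\mathbf{S}_{t\wedge\tau}\}_{t\in[0,T]}$ equals $\{\mathbf{M}_{t\wedge\tau}^{T,\vartheta}\}$ and is a bounded martingale. Using that $X_t$ has a density for $t>0$ (Proposition~\ref{PropCPD}), that $\mathsf{p}_0^{\vartheta}\equiv1$ on $\R^2\setminus\{0\}$ while $\mathsf{p}_s^{\vartheta}(z)\to0$ as $z\to0$ by~\eqref{hSmallaAsympt}, optional stopping at time $T$ gives $\mathbb{P}_x^{T,\vartheta}[\tau>T]=\mathbf{S}_0=\mathsf{p}_T^{\vartheta}(x)$, which is assertion~(i).

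The technical core is the survival identity
\[
\mathbb{P}_x^{T,\vartheta}\big[\tau>t,\ X_t\in dy\big]\ =\ \frac{h_{T-t}^{\vartheta}(y)}{h_T^{\vartheta}(x)}\,g_t(x-y)\,dy,\qquad 0<t\le T,
\]
which I would prove as follows. Fix $t$ and $\varphi\in C_c(\R^2\setminus\{0\})$ with $\varphi\ge0$, and set $u(s,z):=h_{T-s}^{\vartheta}(z)^{-1}\big((\varphi\,h_{T-t}^{\vartheta})*g_{t-s}\big)(z)$ for $0\le s\le t$ and $z\neq0$. The convolution $(\varphi\,h_{T-t}^{\vartheta})*g_{t-s}$ solves the backward heat equation and $h_{T-s}^{\vartheta}$ solves~\eqref{DEh} off the origin, so a direct computation (writing $u=F/h$) gives the exact cancellation $\partial_s u+\tfrac12\Delta u+\nabla\log h_{T-s}^{\vartheta}\cdot\nabla u=0$ on $(0,t)\times(\R^2\setminus\{0\})$. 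By It\^o's formula and the SDE of Proposition~\ref{PropStochPreD}, $s\mapsto u(s,X_s)$ is then a bounded local martingale, which I would make rigorous by stopping at $\tau_\delta:=\inf\{s:|X_s|\le\delta\}$ to stay off the origin. Since~\eqref{hSmallaAsympt} forces $u(s,z)\to0$ as $z\to0$ while $u(t,z)=\varphi(z)$ for $z\neq0$, optional stopping followed by $\delta\downarrow0$ (using path continuity and that $X$ first meets $0$ at $\tau$) yields $\mathbb{E}_x^{T,\vartheta}[\mathbf{1}_{\{\tau>t\}}\varphi(X_t)]=u(0,x)$; together with the observation that $\mathbb{P}_x^{T,\vartheta}[X_t=0]=0$ this gives the survival identity, and the case $t=T$ re-derives~(i). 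Combining this with the Markov property and the elementary time-shift identity $\mathsf{d}_{t+s,\,t+u}^{T,\vartheta}=\mathsf{d}_{s,u}^{T-t,\vartheta}$, read off from~\eqref{FirstTrans}, yields the multi-time survival law $\mathbb{P}_x^{T,\vartheta}[\tau>t_n,\ X_{t_1}\in dy_1,\dots,X_{t_n}\in dy_n]=\prod_{i=1}^n\frac{h_{T-t_i}^{\vartheta}(y_i)}{h_{T-t_{i-1}}^{\vartheta}(y_{i-1})}\,g_{t_i-t_{i-1}}(y_{i-1}-y_i)\,dy_i$, with $y_0=x$.

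With this in hand the rest is bookkeeping. Integrating the survival identity gives $\mathbb{P}_x^{T,\vartheta}[\tau>t]=(h_{T-t}^{\vartheta}*g_t)(x)/h_T^{\vartheta}(x)$, which tends to $1$ as $t\downarrow0$ and is $C^1$ on $(0,T)$ (differentiating under the integral, licensed by Convention~\ref{ConventionAnalytic}); since $\{\tau\in\{0,T\}\}\subseteq\{X_0=0\}\cup\{X_T=0\}$ is null, dividing its $t$-derivative by $\mathbb{P}_x^{T,\vartheta}[\tau<T]=1-\mathsf{p}_T^{\vartheta}(x)$ produces the density~\eqref{DistTau}, proving~(iv). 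For~(ii), the martingale property of $\{\mathbf{S}_{t\wedge\tau}\}$ and~(i) give, for $t<T$, $d\mathring{\mathbb{P}}_x^{T,\vartheta}/d\mathbb{P}_x^{T,\vartheta}|_{\mathcal{F}_t}=\mathbf{S}_{t\wedge\tau}/\mathsf{p}_T^{\vartheta}(x)=\mathbf{1}_{\{\tau>t\}}\mathsf{p}_{T-t}^{\vartheta}(X_t)/\mathsf{p}_T^{\vartheta}(x)$; feeding this together with the multi-time survival law into $\mathring{\mathbb{E}}_x^{T,\vartheta}[\prod_i f_i(X_{t_i})]$, the factors $h_{T-t_i}^{\vartheta}$ cancel against $\mathsf{p}_{T-t_i}^{\vartheta}=(h_0^{\vartheta}*g_{T-t_i})/h_{T-t_i}^{\vartheta}$ and the product telescopes to $\prod_i\mathring{\mathsf{d}}_{t_{i-1},t_i}^{T,\vartheta}(y_{i-1},y_i)$ from~\eqref{SecondTrans}, so $X$ is Markov under $\mathring{\mathbb{P}}_x^{T,\vartheta}$ with the claimed transition density. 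For the drift statement in~(ii), I would apply this change of measure to the decomposition of Proposition~\ref{PropStochPreD}: since $\mathbf{A}^{T,\vartheta}$ is constant before $\tau$ and $d\mathbf{M}^{T,\vartheta}=\nabla\mathsf{p}_{T-s}^{\vartheta}(X_s)\cdot dW^{T,\vartheta}_s$, Hypothesis~\ref{H4}(iii) makes the Girsanov drift well defined, and $\nabla\log\mathsf{p}_{T-s}^{\vartheta}+\nabla\log h_{T-s}^{\vartheta}=\nabla\log(h_0^{\vartheta}*g_{T-s})=\mathring{b}_{T-s}^{\vartheta}$; hence $\mathring{W}_t^{T,\vartheta}:=X_t-x-\int_0^t\mathring{b}_{T-s}^{\vartheta}(X_s)\,ds$ is a continuous $\mathring{\mathbb{P}}_x^{T,\vartheta}$-local martingale with $\langle\mathring{W}^{T,\vartheta,i},\mathring{W}^{T,\vartheta,j}\rangle_t=\delta_{ij}t$, so a Brownian motion by L\'evy's criterion.

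Finally, for~(iii), optional sampling of the bounded martingale $\{\mathbf{S}_{t\wedge\tau}/\mathsf{p}_T^{\vartheta}(x)\}$ at the stopping time $\mathsf{S}$ gives $d\mathring{\mathbb{P}}_x^{T,\vartheta}/d\mathbb{P}_x^{T,\vartheta}|_{\mathcal{F}_{\mathsf{S}}}=\mathbf{S}_{\mathsf{S}\wedge\tau}/\mathsf{p}_T^{\vartheta}(x)$, and on $\{\mathsf{S}<\tau\}$ one has $\mathbf{S}_{\mathsf{S}\wedge\tau}=\mathsf{p}_{T-\mathsf{S}}^{\vartheta}(X_{\mathsf{S}})>0$ since $X_{\mathsf{S}}\neq0$; hence $\mathbb{E}_x^{T,\vartheta}[F\,\mathbf{1}_{\{\mathsf{S}<\tau\}}]=\mathsf{p}_T^{\vartheta}(x)\,\mathring{\mathbb{E}}_x^{T,\vartheta}[F/\mathsf{p}_{T-\mathsf{S}}^{\vartheta}(X_{\mathsf{S}})]$ for bounded $\mathcal{F}_{\mathsf{S}}$-measurable $F$, and taking $F\equiv1$ and dividing cancels the normalizing constant. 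Since $1/\mathsf{p}_{T-\mathsf{S}}^{\vartheta}(X_{\mathsf{S}})=h_{T-\mathsf{S}}^{\vartheta}(X_{\mathsf{S}})/(h_0^{\vartheta}*g_{T-\mathsf{S}})(X_{\mathsf{S}})$ and every bounded $\mathcal{F}_{\mathsf{S}}$-measurable functional is a function of the stopped path $\{X_{t\wedge\mathsf{S}}\}$, this is exactly the weighted path-measure identity of~(iii). I expect the main obstacle to be the rigor of the survival identity near the origin: both $u$ and the drift $b^{\vartheta}$ blow up there, so the It\^o step and the passage $\delta\downarrow0$ must be controlled using the excursion times~\eqref{VARRHOSD} and the occupation/drift bounds in Hypotheses~\ref{H3}--\ref{H4}, while the boundedness of $u$ at infinity needed for optional stopping is supplied by~\eqref{hInfinityAsympt} together with the vanishing-boundary-term conventions of Convention~\ref{ConventionAnalytic}.
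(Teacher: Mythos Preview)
Your proof is correct and, for parts~(i), (ii), and~(iii), essentially matches the paper's argument: optional stopping of the stopped submartingale $\mathbf{S}_{\cdot\wedge\tau}$ for~(i), identification of $\mathbf{S}_{\cdot\wedge\tau}/\mathsf{p}_T^{\vartheta}(x)$ as the Radon--Nikodym martingale followed by Girsanov for~(ii), and optional sampling of that martingale at $\mathsf{S}$ for~(iii).

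The genuine difference is in how you reach~(iv). You go through a separate ``survival identity'' $\mathbb{P}_x^{T,\vartheta}[\tau>t,\,X_t\in dy]=h_T^{\vartheta}(x)^{-1}h_{T-t}^{\vartheta}(y)\,g_t(x-y)\,dy$, proved by an additional It\^o/optional-stopping argument for the auxiliary function $u(s,z)=h_{T-s}^{\vartheta}(z)^{-1}\big((\varphi h_{T-t}^{\vartheta})*g_{t-s}\big)(z)$, then integrate in $y$ and differentiate in $t$. The paper bypasses this entirely: once~(iii) is in hand, it specializes to the deterministic stopping time $\mathsf{S}=t$ to obtain $\mathbb{P}_x^{T,\vartheta}[\tau>t]=\mathsf{p}_T^{\vartheta}(x)\,\mathring{\mathbb{E}}_x^{T,\vartheta}[\Lambda_t^{T,\vartheta}]$, and the right-hand side is computed directly from the explicit transition density~\eqref{SecondTrans} under $\mathring{\mathbb{P}}_x^{T,\vartheta}$, giving $(h_{T-t}^{\vartheta}*g_t)(x)/h_T^{\vartheta}(x)$ with no further stochastic analysis. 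Your route yields strictly more (the joint law of $(\mathbf{1}_{\{\tau>t\}},X_t)$, and the multi-time extension), at the cost of an extra martingale argument whose boundedness at infinity and $\delta\downarrow0$ limit require the localizations you flag; the paper's route is shorter and stays entirely within the objects already built in Proposition~\ref{PropSubMartD}. Note also that your derivation of the conditional transition density~\eqref{SecondTrans} in~(ii), while correct, is not part of the theorem's statement and the paper does not prove it here.
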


\section{Examples: finite-horizon planar diffusions with a point interaction} \label{ExamplesDiffusion}

We now illustrate the abstract Doob-transform framework by applying it to some concrete driving families. We first revisit the ground-state diffusion introduced by Chen in~\cite{Chen2} and examine its restriction to a finite time horizon from the present perspective. Although the finite-horizon law is well defined as the restriction of the infinite-horizon diffusion, the present
framework provides an alternative way of understanding its basic properties. We do not, however, verify the hypotheses of our abstract results in this case, as the required estimates are technically delicate; instead, we apply the results formally and compare the resulting predictions with known properties of the
infinite-horizon ground-state law. These characteristics and their
survival-conditioned counterparts are summarized in Figure~\ref{GStFigure}. We also consider additional driving families generated by measures, including the Lebesgue-driven family of~\cite{CM}. The formal relationships among the resulting laws are summarized in Figure~\ref{LawsRelations}.

\subsection[Chen]{Ground-state diffusion~\cite{Chen2}: a heuristic construction via Kolmogorov continuity arguments} \label{GStDiffusion}

Fix $T,\vartheta>0$ and define the ground-state family
$h^{\vartheta, \textup{GSt}}=\{h_t^{\vartheta, \textup{GSt}}\}_{t\in[0,T]}$ by
\begin{align}\label{GStFamily}
    h_t^{\vartheta, \textup{GSt}}(x)\,:=\,e^{\vartheta t}\,K_0(\sqrt{2\vartheta}\,|x|),
    \qquad t\in[0,T],\ x\in\mathbb R^2\setminus\{0\} \,,
\end{align}
where $K_0$ denotes the modified Bessel function of the second kind of order zero defined in~\eqref{DefBessel}. Then $h^{\vartheta, \textup{GSt}}$ satisfies the assumptions of Definition~\ref{SpaceTimeHEFDef}. In particular, $K_0(x)\sim\log x^{-1}$ as $x\searrow0$, and~\eqref{hInfinityAsympt} holds with the choice $\bar\Psi^\vartheta(a)=a^{-1/2}e^{-\sqrt{2\vartheta}\,a}$ for $a\ge1$, since $K_0(x)\sim \sqrt{\frac{\pi}{2x}}\, e^{-x}$ as $x\nearrow\infty$; these asymptotics of $K_0$ can be found in~\cite[p.~136]{Lebedev}. Moreover,~\eqref{hspaceTimeHarmonic} follows from the fact that
$K_0(\sqrt{2\vartheta}|\cdot|)$ is a positive ground state of the semigroup $\{e^{\frac{t}{2}\Delta^\vartheta}\}_{t\in[0,\infty)}$, in the sense that it is an eigenfunction of the operator $e^{\frac{t}{2}\Delta^\vartheta}$ with kernel $\mathsf{K}_t^\vartheta$ associated with the eigenvalue $e^{\vartheta t}$. This property is made precise in Lemma~\ref{LemRealUnnormEigen}(iii), which implies that the function $\mathlarger{\mathsf d}_t^{\vartheta, \textup{GSt}}:\R^2\times\R^2\to(0,\infty]$
defined by
\begin{align}\label{FirstTransBessel}
\mathlarger{\mathsf d}_t^{\vartheta, \textup{GSt}}(x,y)
\,=\,
e^{-\vartheta t}\,
\frac{K_0\!\big(\sqrt{2\vartheta}|y|\big)}
     {K_0\!\big(\sqrt{2\vartheta}|x|\big)}
\,\mathsf{K}_t^{\vartheta}(x,y) \,,
\qquad t\in[0,\infty) \,,\ x,y\in\mathbb R^2\setminus\{0\} \,,
\end{align}
defines a time-homogeneous transition probability density. In other words, the family
of transition kernels $\{\mathlarger{\mathsf d}_t^{\vartheta, \textup{GSt}}\}_{t\in[0,T]}$ is
obtained as the Doob transform~\eqref{FirstTrans} of $\mathsf{K}_t^{\vartheta}$
driven by $h_t^{\vartheta, \textup{GSt}}(x)=e^{\vartheta t}K_0(\sqrt{2\vartheta}\,|x|)$. For the function $\mathsf{H}_t^{\vartheta, \textup{GSt}}:\R^2\to\R^2$ defined by
\begin{align}
\mathsf{H}_t^{\vartheta, \textup{GSt}}(x)
\,=\,
\frac{e^{-\vartheta t}}{K_0(\sqrt{2\vartheta}|x|)}
\int_{\R^2}  \,y\,K_0(\sqrt{2\vartheta}|y|)\,g_t(x-y)\,dy \,,
\nonumber
\end{align}
we have the space-time harmonic relation
$\int_{\R^2}\mathlarger{\mathsf d}_t^{\vartheta, \textup{GSt}}(x,y)\,
\mathsf{H}_{T-t}^{\textup{GSt}}(y)\,dy=\mathsf{H}_t^{\vartheta, \textup{GSt}}(x)$ for $0<t<T$ and $x\in\R^2\setminus\{0\}$.  A direct computation using Lemma~\ref{LemRealUnnormEigen} yields the following
equivalent representation
\begin{align}
\mathsf{H}_{t}^{\vartheta, \textup{GSt}} (x)
\,=\,
\frac{K_0\!\big(\sqrt{2\vartheta}|x| ,\vartheta t \big)}
     {K_0\!\big(\sqrt{2\vartheta}|x|\big)}\,
x
\,-\,
\sqrt{2\vartheta}\,t\,
\frac{K_{1}\!\big(\sqrt{2\vartheta}|x|,\vartheta t \big)}
     {K_{0}\!\big(\sqrt{2\vartheta}|x| \big)}\,
\frac{x}{|x|},
\qquad x\neq 0,
\nonumber
\end{align}
where $K_{\nu}(z,y)$ denotes the incomplete modified Bessel function of the second kind of order $\nu$ defined in~\eqref{DefIncBessel}.

However, we do not verify Hypotheses~\ref{H2}-\ref{H4} in order to establish the existence of the finite-horizon ground-state-driven planar diffusion with a point interaction at the origin. Indeed, such a diffusion exists, and its law coincides with the restriction to $\mathcal{F}_T$ of the infinite-horizon ground-state law $\mathbb{P}_x^{\vartheta, \textup{GSt}}$. Moreover, under $\mathbb{P}_x^{\vartheta, \textup{GSt}}|_{\mathcal{F}_T}$, axioms \emph{(A1)} and \emph{(A2)} of Definition~\ref{DefRelativehMotion} are satisfied; see Remark~\ref{RemarkChen2}. We do not verify \emph{(A3)} either, since the computations in this case are more delicate. For instance, using Lemma~\ref{LemRealUnnormEigen}(i), the function $\mathsf{p}_t^{\vartheta}:\R^2\to[0,1]$ defined in~\eqref{DefPD} can be written as
\begin{align}
\mathsf{p}_{t}^{\vartheta, \textup{GSt}}(x)
\,:=\,
\frac{K_0\big(\sqrt{2\vartheta}|x|,\vartheta t \big)}
     {K_0\big(\sqrt{2\vartheta}|x|\big)} \,,
\nonumber
\end{align}
and satisfies the PDE~\eqref{PartialForPD} for the drift vector $b^{\vartheta,\textup{GSt}}(x):=\nabla \log K_0\!\big(\sqrt{2\vartheta}\,|x|\big)$. Moreover, the gradient of
$\mathsf{p}_{t}^{\vartheta, \textup{GSt}}$ has the form
\begin{align}
\nabla \mathsf{p}_t^{\vartheta, \textup{GSt}}(x)
\,=\,
\frac{\nabla K_0\!\big(\sqrt{2\vartheta}|x|,\vartheta t\big)}
     {K_0\!\big(\sqrt{2\vartheta}|x|\big)}
\,-\,
\frac{K_0\!\big(\sqrt{2\vartheta}|x|,\vartheta t\big)}
     {K_0\!\big(\sqrt{2\vartheta}|x|\big)}\,
\frac{\nabla K_0\!\big(\sqrt{2\vartheta}|x|\big)}
     {K_0\!\big(\sqrt{2\vartheta}|x|\big)} \,,
\qquad x\in\R^2\setminus\{0\} \,.
\nonumber
\end{align}
Therefore, the integrability conditions in Hypotheses~\ref{H4} are not
straightforward to verify. Below, we apply Theorem~\ref{ThmPropertiesRelativeh} heuristically and compare the resulting formal observations with the
restriction of the law $\mathbb{P}_{x}^{\vartheta, \textup{GSt}}$. To this end, suppose that
$X$ is the diffusion with law $\mathbb{P}_{x}^{T,\vartheta,\textup{GSt}}$
corresponding to the transition densities~\eqref{FirstTransBessel}, so that the
associated SDE is given by
\begin{align}\label{SDEBessel}
dX_t \,=\, dW_t \,+\,
\nabla \log K_0\!\big(\sqrt{2\vartheta}\,|X_t|\big)\,dt \,,
\end{align}
where $W$ is a two-dimensional Brownian motion. Then a \textbf{formal} application of
Theorem~\ref{ThmPropertiesRelativeh} implies that, under
$\mathbb{P}_{x}^{T,\vartheta,\textup{GSt}}$, the diffusion $X$ has the following characteristics. Figure~\ref{GStFigure} summarizes these and related basic properties of the laws
$\mathbb{P}_{x}^{T,\vartheta,\textup{GSt}}$ and
$\mathbb{P}_{x}^{T,\vartheta,\textup{GSt}}\!\left[\,\cdot\,\middle|\,
\mathcal{O}_T^c\right]$, where $\mathcal{O}_T^c:=\{\tau>T\}$.

\begin{enumerate}[(i)]
\item The terminal survival probability is given by
    \begin{align}\label{SurProbBessel}
         \mathbb{P}_{x}^{T,\vartheta,\textup{GSt}}[\mathcal{O}_T^c]\,=\,\frac{K_0\big(\sqrt{2\vartheta}|x|\,,\vartheta T \big)}{ K_0\big(\sqrt{2\vartheta}|x|\big)}\,,
    \end{align}
    where $K_0(z,y)$ is the incomplete Bessel function given in~\eqref{DefIncBessel}. Notice that the above probability is the tail mass one would obtain if the hitting time $\tau$ were distributed as a $\mathrm{GIG}(0;|x|,\sqrt{2\vartheta})$ random variable; see Remark~\ref{RemarkChen2}.

\item The diffusion $X$ under
$\mathring{\mathbb{P}}_{x}^{T,\vartheta,\textup{GSt}}
:=
\mathbb{P}_{x}^{T,\vartheta,\textup{GSt}}\!\left[\,\cdot\,\middle|\,\mathcal{O}_T^c\right]$
satisfies the SDE
\[
dX_t
\,=\,
dW_t^{T,\vartheta}
\,+\,
\nabla_x\log K_0\!\big(\sqrt{2\vartheta}\,|X_t|\,,\,\vartheta (T-t)\big)\,dt \,,
\qquad t\in[0,T] \,,
\]
with respect to some two-dimensional
$\mathring{\mathbb{P}}_{x}^{T,\vartheta,\textup{GSt}}$-Brownian motion $W^{T,\vartheta}$. In particular, conditioning on the survival event $\mathcal{O}_T^c$ removes the instantaneous effect of the point interaction at the origin: the singular contribution of the delta interaction is absorbed into the conditioning, and the resulting dynamics are described by a time-inhomogeneous Doob transform of planar Brownian motion. Equivalently, the conditional law
$\mathring{\mathbb{P}}_{x}^{T,\vartheta,\textup{GSt}}$ has transition density kernels given by the Doob transform of the Gaussian heat kernel driven by $\mathring{h}_t^{\textup{GSt}}(x):=e^{\vartheta t}
K_0\!\big(\sqrt{2\vartheta}\,|x|\,,\,\vartheta t\big)$; that is, for
$0\le s<t\le T$ and $x,y\in\R^2\setminus\{0\}$,
\begin{align}\label{SecondTransBessel}
\mathlarger{\mathsf{\mathring d}}_{s,t}^{\vartheta, \textup{GSt}}(x,y)
\,=\,
e^{-\vartheta (t-s)}\,
\frac{K_0\!\big(\sqrt{2\vartheta}|y|\,,\vartheta(T-t)\big)}
     {K_0\!\big(\sqrt{2\vartheta}|x|\,,\vartheta(T-s)\big)}
\,g_{t-s}(x-y)\,.
\end{align}

\item If $\mathsf{S}$ is an $\mathcal{F}$-stopping time such that $\mathbb{P}_{x}^{T,\vartheta,\textup{GSt}}[  \mathsf{S}<\tau ]>0$, then  the stopped  process $\{X_{t\wedge \mathsf{S}} \}_{t\in [0,T]}$ has the same law under $\mathbb{P}_{x}^{T,\vartheta,\textup{GSt}}[\,\cdot\,|\,\mathsf{S}<\tau ]$ as it does under the path measure
\begin{align}
\mathring{\mathbb{E}}_{x}^{T,\vartheta,\textup{GSt}}\!\bigg[
\frac{K_0\!\big(\sqrt{2\vartheta}\,|X_{\mathsf{S}}|\big)}
     {K_0\!\big(\sqrt{2\vartheta}\,|X_{\mathsf{S}}|\,,\,\vartheta (T-\mathsf{S}\big)}\bigg]^{-1}
     \,
\frac{K_0\!\big(\sqrt{2\vartheta}\,|X_{\mathsf{S}}|\big)}
     {K_0\!\big(\sqrt{2\vartheta}\,|X_{\mathsf{S}}|\,,\,\vartheta (T-\mathsf{S}\big)}
     \, \mathring{\mathbb{P}}_{x}^{T,\vartheta,\textup{GSt}} \,.    \nonumber
\end{align}

\item Since we can write $h_t^{\vartheta, \textup{GSt}}(x)=e^{\vartheta t}\,h_0^{\vartheta, \textup{GSt}}(x)$ with
$h_0^{\vartheta, \textup{GSt}}(x):=K_0(\sqrt{2\vartheta}\,|x|)$, we have
\begin{align}
(h_0^{\vartheta, \textup{GSt}}*g_T)(x)
    \,=\,
\int_{\R^2} g_T(x-y)\, K_0\!\big(\sqrt{2\vartheta}|y|\big)\,dy
    \,=\,
e^{\vartheta T}\, K_0\big(\sqrt{2 \vartheta} |x|\,,\vartheta T\big) \,, \nonumber
\end{align}
where the last equality follows from Lemma~\ref{LemRealUnnormEigen}(i). Similarly,
$(h_{T-t}^{\textup{GSt}}*g_t)(x)=e^{\vartheta T}\,K_0\big(\sqrt{2 \vartheta} |x|\,,\vartheta t\big)$.
Hence, a formal application of~\eqref{DistTau} implies that the distribution of the
random variable $\tau$ under $\mathbb{P}_{x}^{T,\vartheta,\textup{GSt}}$, conditioned on the
event $\mathcal{O}_T:=\{\tau<T\}$, has the density
\begin{align}\label{DistTauBessel}
\frac{\mathbb{P}_{x}^{T,\vartheta,\textup{GSt}}\big[ \,\tau \in dt  \,\big|\, \mathcal{O}_T\, \big] }{dt}
\,=\,
\frac{1
}{
K_0\big(\sqrt{2\vartheta}|x|\big)
\,-\,
K_0\big(\sqrt{2\vartheta}|x|,\vartheta T\big)
}\,\frac{1}{2t}e^{-\vartheta t-\frac{|x|^2}{2t} }\, ,
\qquad 0<t<T \, .
\end{align}
Since $K_0(\sqrt{2\vartheta}|x|\,,\vartheta T)\nearrow K_0(\sqrt{2\vartheta}|x|)$ as
$T\uparrow\infty$, the above expression may be viewed as the density of the
$\mathrm{GIG}(0;|x|,\sqrt{2\vartheta})$ law truncated to $(0,T)$ and renormalized; see the following remark.

\end{enumerate}

\begin{figure}[t]
\centering
\begin{minipage}{\linewidth}
\centering
\begin{tikzpicture}[font=\small,
  card/.style={draw, rounded corners, inner sep=7pt, align=left}
]

\node[card] (CARD) {%
\parbox{0.92\linewidth}{%
\centering \textbf{\footnotemark Ground-state diffusion~\cite{Chen2} and its survival-conditioned law on $[0,T]$}\\[4pt]
\raggedright
\setlength{\tabcolsep}{0pt}
\renewcommand{\arraystretch}{1.05}
\begin{tabular}{@{}p{0.27\linewidth}p{0.65\linewidth}@{}}

\textbf{Base semigroup} &
$\displaystyle
\mathsf{K}_t^{\vartheta}(x,y)
\overset{\eqref{DefFullKer}}{:=}
g_t(x-y)
+ 2\,\pi\, \vartheta
\int_{0 < r < s < t}
g_{r}(x)\,
\nu'\!\big(\vartheta(s-r)\big)\,
g_{t-s}(y)\,ds\,dr
$
\\[9pt]

\textbf{Driving family} &
$\displaystyle
h_t^{\vartheta, \textup{GSt}}(x)
\overset{\eqref{GStFamily}}{=}
e^{\vartheta t}\,K_0\!\big(\sqrt{2\vartheta}\,|x|\big),
\qquad t\in[0,T],\ x\neq0
$
\\[9pt]

\textbf{Transition density} &
$\displaystyle
\mathsf d_{t}^{\textup{GSt}}(x,y)
\overset{\eqref{FirstTransBessel}}{=}
e^{-\vartheta t}\,
\frac{K_0\!\big(\sqrt{2\vartheta}|y|\big)}
     {K_0\!\big(\sqrt{2\vartheta}|x|\big)}\,
\mathsf{K}_t^{\vartheta}(x,y),
\qquad t\in(0,T]
$
\\[9pt]

\textbf{Law} &
$\mathbb{P}^{T,\vartheta,\textup{GSt}}_x$ on $C([0,T],\R^2)$ 
\\[8pt]

\textbf{Drift} &
$\displaystyle
b^{\vartheta,\textup{GSt}}(x)
=
\nabla \log K_0\!\big(\sqrt{2\vartheta}\,|x|\big),
\qquad x\neq0
$
\\[8pt]

\textbf{Generator} &
$\displaystyle
\mathscr{L}^{\vartheta,\textup{GSt}}
=
\frac12\,\Delta
+
b^{\vartheta,\textup{GSt}}\cdot\nabla,
\qquad x\neq0
$
\\[8pt]

\textbf{SDE} &
$\displaystyle
dX_t
=
dW_t
+
b^{\vartheta,\textup{GSt}}(X_t)\,dt,
\qquad t\in[0,T]
$
\\[10pt]

\textbf{Radial SDE} &
$\displaystyle
dR_t
=
d\bar W_t
+
\Big(
\frac{1}{2R_t}
+
\bar b^{\vartheta,\textup{GSt}}(R_t)
\Big)\,dt
$
\\[12pt]

\textbf{Survival} &
$\displaystyle
\mathbb{P}_{x}^{T,\vartheta,\textup{GSt}}[\tau>T]
\overset{\eqref{SurProbBessel}}{=}
\frac{K_0\!\big(\sqrt{2\vartheta}|x|\,,\vartheta T\big)}
     {K_0\!\big(\sqrt{2\vartheta}|x|\big)}
$
\\[11pt]

\textbf{Conditional law} &
$\displaystyle
\mathring{\mathbb{P}}_{x}^{T,\vartheta,\textup{GSt}}
\,:=\,
\mathbb{P}_{x}^{T,\vartheta,\textup{GSt}}\!\left[\,\cdot\,\middle|\,\tau>T\right]
$
\\[8pt]

\textbf{Conditional density} &
$\displaystyle
\mathsf{\mathring d}^{\vartheta,\textup{GSt}}_{s,t}(x,y)
\,\overset{\eqref{SecondTransBessel}}{=}\,
e^{-\vartheta (t-s)}\,
\frac{K_0\!\big(\sqrt{2\vartheta}|y|\,,\vartheta(T-t)\big)}
     {K_0\!\big(\sqrt{2\vartheta}|x|\,,\vartheta(T-s)\big)}
\,g_{t-s}(x-y)
$
\\[11pt]

\textbf{Conditional drift} &
$\displaystyle
\mathring b_t^{\vartheta,\textup{GSt}}(x)
=
\nabla_x\log K_0\!\big(\sqrt{2\vartheta}\,|x|\,,\vartheta t\big),
\qquad x\neq0
$
\\[8pt]

\textbf{Conditional generator} &
$\displaystyle
\mathring{\mathscr{L}}^{\,T,\vartheta,\textup{GSt}}_t
=
\frac12\,\Delta
+
\mathring b^{\vartheta,\textup{GSt}}_{T-t}\cdot\nabla,
\qquad x\neq0
$
\\[8pt]

\textbf{Conditional SDE} &
$\displaystyle
dX_t
=
dW_t^{T,\vartheta}
+
\nabla_x\log K_0\!\big(\sqrt{2\vartheta}\,|X_t|\,,\vartheta(T-t)\big)\,dt,
\qquad t\in[0,T]
$
\\[11pt]

\textbf{Hit-time given hit} &
$\displaystyle
\frac{\mathbb{P}_{x}^{T,\vartheta,\textup{GSt}}\!\big[\,\tau\in dt\,\big|\,\tau<T\big]}{dt}
\overset{\eqref{DistTauBessel}}{=}
\frac{(2t)^{-1}\,
e^{-\vartheta t-\frac{|x|^2}{2t}}}{
K_0\!\big(\sqrt{2\vartheta}|x|\big)
-
K_0\!\big(\sqrt{2\vartheta}|x|\,,\vartheta T\big)
}\,\mathbf{1}_{(0,T)}(t)
$
\\[-2pt]

\end{tabular}
}%
};
\end{tikzpicture}

\caption{Here $\bar b^{\vartheta,\textup{GSt}}(|x| ):= \frac{x}{|x|}\cdot b^{\vartheta,\textup{GSt}}(x)$ for $x\neq0$ and
$\bar W_t=\int_0^{t\wedge\tau}\frac{X_s}{|X_s|}\cdot dW_s$ for $t\in[0,T]$.
}
\footnotetext{%
We identify $\mathbb{P}_{x}^{T,\vartheta,\textup{GSt}}
=\mathbb{P}_{x}^{\vartheta, \textup{GSt}}\big|_{\mathcal{F}_T}$. The existence and
characteristics of $\mathring{\mathbb{P}}_{x}^{T,\vartheta,\textup{GSt}}$ listed
above should be understood as heuristic applications of
Theorem~\ref{ThmPropertiesRelativeh}, although some of these statements can be justified alternatively, as explained in Remark~\ref{RemarkChen2}.}
\label{GStFigure}
\end{minipage}
\end{figure}

\begin{remark}\label{RemarkChen2}
The heuristic derivations~\eqref{SurProbBessel} and~\eqref{DistTauBessel} are in fact correct and can be justified as follows.

Consider the time-homogeneous ground-state diffusion
$X=\{X_t\}_{t\in[0,\infty)}$ with its infinite-horizon law
$\mathbb{P}_{x}^{\vartheta, \textup{GSt}}$ introduced in~\cite{Chen2}. For each
$x\in\mathbb R^2\setminus\{0\}$, the process $X$ satisfies the
SDE~\eqref{SDEBessel} under $\mathbb P_x^{\vartheta, \textup{GSt}}$. Chen in~\cite{Chen2}
shows that the semigroup $\{\mathsf{K}_t^{\vartheta}\}_{t\in[0,\infty)}$
admits the following representation: for every bounded measurable function
$\varphi:\mathbb R^2\to\mathbb R$ and every $x\neq 0$,
\begin{align}
\int_{\mathbb R^2}
\mathsf{K}_t^{\vartheta}(x,y)\,\varphi(y)\,dy
\,=\,
\mathbb E_x^{\vartheta, \textup{GSt}}\!\bigg[
\frac{e^{\vartheta t}\,
K_0\!\big(\sqrt{2\vartheta}\,|x|\big)}
{K_0\!\big(\sqrt{2\vartheta}\,|X_t|\big)}
\,\varphi(X_t)
\bigg]\, . \nonumber
\end{align}
In particular, $X$ has transition density kernels of the
form~\eqref{FirstTransBessel} for all $t\in[0,\infty)$. Consequently, when restricted to the finite time interval $[0,T]$, the diffusion $X$ under the measure
$\mathbb{P}_{x}^{\vartheta, \textup{GSt}}\big|_{\mathcal{F}_T}$ satisfies axioms
\emph{(A1)} and \emph{(A2)} of Definition~\ref{DefRelativehMotion} with the
driving family
$h_t^{\vartheta, \textup{GSt}}(x)=e^{\vartheta t}K_0(\sqrt{2\vartheta}\,|x|)$.\footnote{%
The law $\mathbb{P}_{x}^{\vartheta, \textup{GSt}}|_{\mathcal{F}_T}$ agrees with the finite-horizon law $\mathbb{P}_{x}^{T,\vartheta,\textup{GSt}}$. We keep a distinct notation to emphasize that the infinite-horizon law $\mathbb{P}_{x}^{\vartheta, \textup{GSt}}$ was constructed in~\cite{Chen2} by methods different from the Kolmogorov continuity-based approach discussed here.}

Next, recall that the hitting time $\tau$ is distributed as a generalized inverse Gaussian
random variable $\textup{GIG}(0;|x|,\sqrt{2\vartheta})$ under $\mathbb{P}_{x}^{\vartheta, \textup{GSt}}$;
see, for instance, \cite[p.~884]{Donati} or \cite[p.~3206]{Chen2}. Equivalently,
\begin{align}
\frac{\mathbb{P}_{x}^{\vartheta, \textup{GSt}}[\tau\in dt]}{dt}
\,=\,
\frac{1}{K_0\big(\sqrt{2\vartheta}|x|\big)}\,
\frac{1}{2t}\,
e^{-\vartheta t-\frac{|x|^2}{2t}}\,,
\qquad t>0 \,. \nonumber
\end{align}
Fix $T>0$ and set $\mathcal O_T^c:=\{\tau>T\}$. Then
\begin{align}
\mathbb{P}_{x}^{\vartheta, \textup{GSt}}\big|_{\mathcal{F}_T}[\mathcal O_T^c]
\,=\,
\mathbb{P}_{x}^{\vartheta, \textup{GSt}}[\tau>T]
\,=\,
\frac{1}{K_0\big(\sqrt{2\vartheta}|x|\big)}
\int_T^\infty \frac{1}{2t}\,
e^{-\vartheta t-\frac{|x|^2}{2t}}\,dt
\,=\,
\frac{K_0\!\big(\sqrt{2\vartheta}|x|,\vartheta T\big)}
{K_0\!\big(\sqrt{2\vartheta}|x|\big)}\, , \nonumber
\end{align}
where the last equality follows from~\eqref{DefIncBessel}, and thus~\eqref{SurProbBessel} holds. Moreover, the conditional density of $\tau$ given $\mathcal O_T$ is obtained as
\begin{align}
\frac{\mathbb{P}_{x}^{\vartheta, \textup{GSt}}\big|_{\mathcal{F}_T}\!\big[\,\tau\in dt\,\big|\,\mathcal O_T\,\big]}{dt}
\,=\,&
\frac{1}
{\mathbb{P}_{x}^{\vartheta, \textup{GSt}}[\tau\le T]}\,
\frac{\mathbb{P}_{x}^{\vartheta, \textup{GSt}}[\tau\in dt]}{dt}\,
\mathbf{1}_{(0,T)}(t) \nonumber\\
\,=\,&
\frac{1}{1-\mathbb{P}_{x}^{\vartheta, \textup{GSt}}\big|_{\mathcal{F}_T}[\mathcal O_T^c]}\,
\frac{\mathbb{P}_{x}^{\vartheta, \textup{GSt}}[\tau\in dt]}{dt}\,
\mathbf{1}_{(0,T)}(t)\, . \nonumber
\end{align}
Substituting the values in the expressions above yields~\eqref{DistTauBessel}.

\end{remark}

\subsection{Finite-horizon measure-driven diffusions}
In this section, we consider a general class of driving families generated by measures. Fix $T,\vartheta>0$ and let $\mathsf{K}_t^{\vartheta}$ be defined as
in~\eqref{DefFullKer}. Given a positive Borel measure $\mu$ on $\R^2$, define
\begin{align}
h_t^{\vartheta, \mu}(x)
\,:=\,
\int_{\R^2}\mathsf{K}_t^{\vartheta}(x,y)\,\mu(dy) \,,
\qquad (t,x)\in[0,T]\times\R^2 \,. \nonumber
\end{align}
Then $h_t^{\vartheta, \mu}(x)>0$ for all $(t,x)\in[0,T]\times\R^2$, and the family $h^{\vartheta, \mu}=\{h_t^{\vartheta, \mu}\}_{t\in[0,T]}$ satisfies the space--time harmonicity relation~\eqref{hspaceTimeHarmonic}. Indeed, for $0\le s<t\le T$ and $x\in\R^2$, by Tonelli's theorem and the semigroup property of $\{\mathsf{K}_t^{\vartheta}\}_{t\in[0,\infty)}$,
\begin{align*}
\int_{\R^2}\mathsf{K}_{t-s}^{\vartheta}(x,y)\,h_{T-t}^{\mu}(y)\,dy
\,=\,
\int_{\R^2}\int_{\R^2}
\mathsf{K}_{t-s}^{\vartheta}(x,y)\,
\mathsf{K}_{T-t}^{\vartheta}(y,z)\,dy\,\mu(dz) 
\,=\,
\int_{\R^2}\mathsf{K}_{T-s}^{\vartheta}(x,z)\,\mu(dz)
\,=\,
h_{T-s}^{\mu}(x)\,.
\end{align*}
Consequently, the Doob transform~\eqref{FirstTrans} driven by $h^{\vartheta, \mu}$ defines a family of transition density kernels on $[0,T]$. We write $h_t^{\vartheta, \mu}:=\mathsf{K}_t^{\vartheta}\mu$, where $(\mathsf{K}_t^{\vartheta}\mu)(x) := \int_{\R^2}\mathsf{K}_t^{\vartheta}(x,y)\,\mu(dy)$. Next, for $t\in[0,T]$ and $x,y\in\R^2$, define $\mathsf v_t^{\vartheta}:\R^2\times\R^2\to[0,\infty]$ by
\begin{align}\label{DefFullKerSecondTerm}
\mathsf v_t^{\vartheta}(x,y)
\,:=\,
2\pi\,\vartheta
\int_{0<r<s<t}
g_r(x)\,
\nu'\!\big(\vartheta(s-r)\big)\,
g_{t-s}(y)\,ds\,dr  \,,
\end{align}
which is the interaction term in~\eqref{DefFullKer}, and define the Volterra-type kernel $\mathsf{V}_t^{\vartheta,\mu} :\R^2 \to [0,\infty]$ associated to $\mu$ by
\begin{align}\label{VolKernelmu}
\mathsf{V}_t^{\vartheta, \mu}(x)
\,:=\,
\int_{\R^2}\mathsf v_t^{\vartheta}(x,y)\,\mu(dy)\,. 
\end{align}
Notice that \( h_t^{\vartheta, \mu}(x) = \int_{\R^2} g_t(x-y)\,\mu(dy) + \mathsf{V}_t^{\vartheta, \mu}(x)\). In the following subsections, we consider some specific choices of the measure
$\mu$ and describe the corresponding explicit forms of $\mathsf{V}_t^{\vartheta, \mu}$ and the associated driving family $h^{\vartheta, \mu}$. We discuss the resulting planar diffusions driven by these families at a formal level.

\subsubsection[CM]{Lebesgue-driven diffusion: recovery of \cite{CM}}\label{LebDiffusion}
For $\mu(dy)=dy$, the Volterra-type kernel $\mathsf{V}_t^{\vartheta,\textup{Leb}}:\R^2\to[0,\infty]$ defined in~\eqref{VolKernelmu} can be rewritten as a single convolution in the time variable $r$ as
\begin{align}\label{DefH}
\mathsf{V}_t^{\vartheta,\textup{Leb}}(x)
\,=\,
\int_0^t \frac{e^{-\frac{|x|^2}{2r}}}{r}\,
\nu\big(\vartheta(t-r)\big)\,dr \, ,
\end{align}
for $x\in\R^2\setminus\{0\}$. The corresponding driving family
$h^{\vartheta,\mathrm{Leb}}:=\mathsf{K}_t^{\vartheta}\mathbf{1}$
is therefore given by
\begin{align}\label{Lebfamily}
h_t^{\vartheta,\mathrm{Leb}}(x)
\,=\,
1
\,+\,
\mathsf{V}_t^{\vartheta,\textup{Leb}}(x)\, ,
\qquad t\in [0,T]\,,\ x\in\R^2\setminus\{0\}\,.
\end{align}
Notice that if $h^\vartheta=\{h_t^\vartheta\}_{t\in[0,T]}$ is any family satisfying the space-time harmonicity condition~\eqref{hspaceTimeHarmonic}, then $h_0^\vartheta(x)\equiv1$ on $\R^2$ if and only if $h^\vartheta=h^{\vartheta, \mathrm{Leb}}$. Thus, the Lebesgue-driven family is the unique family satisfying~\eqref{hspaceTimeHarmonic} with unit initial profile. Moreover, $h^{\vartheta, \mathrm{Leb}}$ is a pre-admissible driving family in the sense of Definition~\ref{SpaceTimeharmonicD}. Indeed, the radial profile satisfies $\bar{h}_t^{\vartheta, \mathrm{Leb}}(r)\sim 2\,\nu(\vartheta T)\log r^{-1}$ as $r\to0$ by~\cite[Prop.~8.11]{CM}, while the asymptotic condition~\eqref{hInfinityAsympt}
follows from \cite[Lem.~8.12]{CM}. The space-time harmonicity relation
\eqref{hspaceTimeHarmonic} and the differential identity~\eqref{DEh} follow, respectively, from the semigroup property of the integral kernel $\mathsf{K}_t^{\vartheta}$ and the diffusion equation satisfied by the Gaussian density $g_t$. The family $h^{\vartheta, \mathrm{Leb}}$ further satisfies all the assumptions of Theorem~\ref{ThmSubMARTD}; for instance, the integrability condition~\eqref{H4GradL2} follows from~\cite[Lem.~3.2]{CM}, while~\eqref{H3Upcross} is verified in~\cite[Lem.~5.3]{CM}. The corresponding planar
diffusion $X$ with law $\mathbb{P}_{x}^{T,\vartheta,\mathrm{Leb}}$ exists by~\cite[Prop.~2.2]{CM} and admits the desired submartingale characterization established in~\cite[Prop.~3.4]{CM}. In this special case, the conclusions of Theorem~\ref{ThmPropertiesRelativeh}, which we record below for comparison, reduce exactly to~\cite[Thm.~2.4]{CM}.

\begin{enumerate}[(i)]
\item  $\displaystyle \mathbb{P}_{x }^{T,\vartheta, \textup{Leb}}[\mathcal{O}_T^c]\,=\,\frac{1}{1+\mathsf{V}_t^{\vartheta,\textup{Leb}}(x)}   $

\item The  coordinate process $\{X_t \}_{t\in [0,T]}$  is a two-dimensional Brownian motion with initial position $x$ under the path measure $\mathbb{P}_{x }^{T,\vartheta, \textup{Leb}}[\,\cdot \mid \mathcal{O}_T^c]$.

\item Let $\mathsf{S}$ be an $\mathcal{F}$-stopping time such that $\mathbb{P}_{x }^{T,\vartheta, \textup{Leb}}[  \mathsf{S}<\tau ]>0$. Then the stopped coordinate process $\{X_{t\wedge \mathsf{S}}\}_{t\in[0,T]}$ has the same law under $\mathbb{P}_{x }^{T,\vartheta, \textup{Leb}}\!\left[\,\cdot\,\middle|\,\mathsf{S}<\tau\right]$ as it does under the path measure
\begin{align*}
\frac{1+\mathsf{V}_{T-\mathsf{S}}^{\vartheta}\!\big(X_{\mathsf{S}}\big)}
{\mathbb{E}_x^T\!\big[1+\mathsf{V}_{T-\mathsf{S}}^{\vartheta}\!\big(X_{\mathsf{S}}\big)\big]}\,
\mathbb{P}_x^T\, ,
\end{align*}
where recall that $\mathbb{P}_x^T$ is the two-dimensional Wiener measure.

\item Using~\eqref{DefH} and a change of variables, we may write
$\mathbb{E}_x^T\!\big[\,\mathsf{V}_{T-t}^{\vartheta}\!\big(X_t\big)\,\big]
= 2\pi \int_t^{T} g_r(x)\,\nu\!\big(\vartheta (T-r)\big)\,dr$.
Consequently, the distribution of the random variable $\tau$ under
$\mathbb{P}_{x}^{T,\vartheta, \textup{Leb}}$ conditional on the event
$\mathcal{O}_T:=\{\tau < T\}$ is absolutely continuous on $(0,T)$ with density
\begin{align}
\frac{\mathbb{P}_{x}^{T,\vartheta, \textup{Leb}}\!\big[\,\tau\in dt\,\big|\,
\mathcal{O}_T\,\big]}{dt}
\,=\,
\frac{2\pi}{\mathsf{V}_T^{\vartheta,\textup{Leb}}(x)}\,
g_t(x)\,\nu\!\big(\vartheta (T-t)\big),
\qquad t\in(0,T).
\nonumber
\end{align}

\end{enumerate}

\subsubsection{Dirac--driven diffusion (pinning at the origin)}

Take $\mu=\delta_{0}$. Then the associated driving family
$h^{\vartheta,\mathrm{Dir}} := \mathsf{K}^{\vartheta}\delta_{0}$ is given by
\begin{align}\label{Dirfamily}
h_t^{\vartheta,\mathrm{Dir}}(x)
\,=\,
g_t(x)
\,+\,
\mathsf{V}_t^{\vartheta,\textup{Dir}}(x)\,,
\qquad t\in [0,T]\,,\ x\in\R^2 \,.
\end{align}
Here $\mathsf{V}_t^{\vartheta,\textup{Dir}}$ denotes the contribution of the point
interaction at the origin obtained by taking $\mu=\delta_0$ in~\eqref{VolKernelmu}, and is given explicitly by
\begin{align}
\mathsf{V}_t^{\vartheta,\textup{Dir}}(x)
\,:=\,
\int_{0}^{t}
g_{r}(x)\,
\nu^{\vartheta,\textup{Dir}}\!\big(t-r\big)\,dr \,,
\qquad t>0\,,\ x\in\R^2 \,,
\nonumber
\end{align}
where the time kernel $\nu^{\vartheta,\textup{Dir}}:(0,\infty)\to(0,\infty]$ associated with the Dirac-driven family and is given by
\begin{align}
\nu^{\vartheta,\textup{Dir}}(a)
\,:=\,
\int_{0}^{a} \frac{\vartheta\,\nu'\!\big(\vartheta(a-s)\big)}{s}\,ds \,.
\nonumber
\end{align}
Since the initial profile satisfies $h_0^{\vartheta, \mathrm{Dir}}=\delta_{0}$, that is, $h_0^{\vartheta,\mathrm{Dir}}(x)=\delta(x)$ in the distributional sense, convolution with the Gaussian heat kernel yields $(h_0^{\vartheta, \mathrm{Dir}}*g_t)(x)=g_t(x)$. The corresponding drift of the conditioned dynamics is therefore
\[
\mathring{b}_t^{\vartheta, \mathrm{Dir}}(x)
\,:=\,
\nabla_x\log\big(h_0^{\vartheta, \mathrm{Dir}}*g_t\big)(x)
\,=\,
\nabla_x\log g_t(x)
\,=\,
-\frac{x}{t}\,,
\]
which coincides with the drift of a two-dimensional Brownian bridge pinned at the origin.

Fix $x\in \R^2\backslash \{0\}$ and suppose that there exists a planar diffusion $X=\{X_t\}_{t\in[0,T]}$, with law $\mathbb{P}_{x}^{T,\vartheta, \mathrm{Dir}}$, whose
transition density kernels are obtained via the Doob transform of the family of integral kernels $\{\mathsf{K}_t^{\vartheta}\}_{t\in[0,T]}$ driven by $h^{\vartheta, \mathrm{Dir}}$. As the existence of such a process is not established \emph{a priori} in this setting, the discussion below is to be understood as a \textbf{formal} characterization. In this sense, an application of Theorem~\ref{ThmPropertiesRelativeh} yields that under $\mathbb{P}_{x}^{T,\vartheta, \mathrm{Dir}}$ the process $X$ exhibits the following characteristics.

\begin{enumerate}[(i)]
\item $\displaystyle
\mathbb{P}_{x}^{T,\vartheta, \mathrm{Dir}}[\mathcal{O}_T^c]
=
\frac{g_T(x)}{g_T(x)+\mathsf{V}_T^{\vartheta,\textup{Dir}}(x)}
$

\item
If $\mathring{\mathbb{P}}_{x}^{T,\vartheta, \mathrm{Dir}}
:=
\mathbb{P}_{x}^{T,\vartheta, \mathrm{Dir}}[\,\cdot \mid \mathcal{O}_T^c]$ denotes the
conditioning of $\mathbb{P}_{x}^{T,\vartheta, \mathrm{Dir}}$ on the event
$\mathcal{O}_T^c$, then under $\mathring{\mathbb{P}}_{x}^{T,\vartheta, \mathrm{Dir}}$
the process $\{X_t\}_{t\in[0,T]}$ satisfies the SDE
\begin{align}\label{SDEBridge}
dX_t
&\,=\,
d\mathring{W}_t^{T,\vartheta}
\,-\,
\frac{X_t}{T-t}\,dt\,,
\qquad t\in[0,T)\,, 
\end{align}
where $\mathring{W}^{T,\vartheta}$ is a
$\mathring{\mathbb{P}}_{x}^{T,\vartheta, \mathrm{Dir}}$-Brownian motion. Consequently, $\mathring{\mathbb{P}}_{x}^{T,\vartheta, \mathrm{Dir}}$ coincides with the law $\mathbb{P}_{x\to 0}^{\,T}$ of a two-dimensional Brownian bridge from $x$ to $0$ over $[0,T]$, under the survival conditioning $\mathcal{O}_T^c$. In particular, the conditional transition density under $\mathring{\mathbb P}_x^{T,\vartheta, \mathrm{Dir}}$
has the form~\eqref{SecondTrans},
\begin{align}
\mathlarger{\mathsf{\mathring{d}}}_{s,t}^{T, \mathrm{Dir}}(x,y)
\,=\,
\frac{g_{T-t}(y)}{g_{T-s}(x)}\, g_{t-s}(x-y)\,,
\qquad 0\le s<t<T\,. \nonumber
\end{align}

\item
Let $\mathring{\mathbb{P}}_{x}^{T,\vartheta, \mathrm{Dir},\mathsf{S}}
:=\mathbb{P}_{x}^{T,\vartheta, \mathrm{Dir}}\!\left[\,\cdot\,\middle|\,\mathsf{S}<\tau\right]$
denote the conditioning of $\mathbb{P}_{x}^{T,\vartheta, \mathrm{Dir}}$ on the event
$\{\mathsf{S}<\tau\}$. Then, on the event $\{\mathsf{S}<\tau\}$, the stopped process
$\{X_{t\wedge \mathsf{S}}\}_{t\in[0,T]}$ has the same law under
$\mathring{\mathbb{P}}_{x}^{T,\vartheta, \mathrm{Dir},\mathsf{S}}$ as it does under
the path measure
\begin{align}
\displaystyle 
\mathbb{E}_{x\to 0}^{\,T}\!\bigg[
\frac{g_{T-\mathsf{S}}\!\big(X_{\mathsf{S}}\big)
      \,+\,
      \mathsf{V}_{T-\mathsf{S}}^{\vartheta,\textup{Dir}}\!\big(X_{\mathsf{S}}\big)}
     {g_{T-\mathsf{S}}\!\big(X_{\mathsf{S}}\big)}
\bigg]^{-1}
\,
\displaystyle 
\frac{g_{T-\mathsf{S}}\!\big(X_{\mathsf{S}}\big)
\,+\,
\mathsf{V}_{T-\mathsf{S}}^{\vartheta,\textup{Dir}}\!\big(X_{\mathsf{S}}\big)}
     {g_{T-\mathsf{S}}\!\big(X_{\mathsf{S}}\big)}
\,\mathbb{P}_{x\to 0}^{\,T}\,
\qquad \text{on }\{\mathsf{S}<\tau\}\, . \nonumber
\end{align}

\item
The conditional distribution of the random variable $\tau$ under
$\mathbb{P}_x^{T,\vartheta,\mathrm{Dir}}$ given the event
$\mathcal O_T:=\{\tau<T\}$ is absolutely continuous on $(0,T)$, with density
\begin{align}
\frac{\mathbb{P}_{x}^{T,\vartheta,\mathrm{Dir}}\!\big[\,\tau\in dt\,\big|\,
\mathcal O_T\,\big]}{dt}
\,=\,
\frac{1}{\mathsf{V}_T^{\vartheta,\textup{Dir}}(x)}\,
g_t(x)\,\nu^{\vartheta,\textup{Dir}}(T-t)\,,
\qquad t\in(0,T)\, .
\nonumber
\end{align}
The above uses the identity $\frac{d}{dt}\,
\big(\mathsf V_{T-t}^{\vartheta,\textup{Dir}}*g_t\big)(x) = -\,g_t(x)\,\nu^{\vartheta,\textup{Dir}}(T-t)$, which in turn is obtained by computing, under the two-dimensional Wiener
measure $\mathbb{P}_x^{T}$,
\begin{align*}
\mathbb{E}_x^{T}\!\big[\mathsf{V}_{T-t}^{\vartheta,\textup{Dir}}(X_t)\big]
\,=\,
\int_t^{T} \, g_{r}(x)\, \nu^{\vartheta,\textup{Dir}}(T-r)\,dr \, .
\end{align*}

\end{enumerate}

\subsubsection{Gaussian-endpoint driven diffusion}

Fix $T,\vartheta>0$ and $\alpha>0$. Consider the measure
$\mu(dy)=g_{\alpha}(y)\,dy$ and the corresponding Gaussian-driven family $h_t^{\vartheta,\mathrm{Gau}}(x) := \int_{\R^2}\mathsf{K}_t^{\vartheta}(x,y)\,g_{\alpha}(y)\,dy$. Then the family $h^{\vartheta,\mathrm{Gau}}$ admits the decomposition
\begin{align}\label{Gaufamily}
h_t^{\vartheta,\mathrm{Gau}}(x)
\,=\,
g_{t+\alpha}(x)
\,+\,
\mathsf V_t^{\vartheta,\mathrm{Gau}}(x),
\qquad t>0,\ x\in\R^2 \setminus \{0\} \, , 
\end{align}
where the function $\mathsf V_t^{\vartheta,\mathrm{Gau}}$, obtained by Gaussian averaging in~\eqref{VolKernelmu}, has the simplified form
\begin{align}
\mathsf V_t^{\vartheta,\mathrm{Gau}}(x)
\,=\,
\int_{0}^{t}
g_{r}(x)\,\nu^{\vartheta,\mathrm{Gau}}_{\alpha}\!\big(t-r\big)\,dr,
\qquad t>0,\ x\in\R^2 \setminus \{0\} \, , \nonumber
\end{align}
with associated time-kernel $\nu^{\vartheta,\mathrm{Gau}}_{\alpha}:(0,\infty)\to(0,\infty]$
defined by
\begin{align}
\nu^{\vartheta,\mathrm{Gau}}_{\alpha}(a)
\,:=\,
\int_{0}^{a}
\frac{\vartheta\,\nu'\!\big(\vartheta(a-s)\big)}{s+\alpha}\,ds \, . \nonumber
\end{align}

Fix $x\in\R^2\setminus\{0\}$ and let $X=\{X_t\}_{t\in[0,T]}$ denote a planar diffusion
with law $\mathbb{P}_{x}^{T,\vartheta,\mathrm{Gau}}$, formally obtained as the Doob transform of the family of integral kernels $\{\mathsf{K}_t^{\vartheta}\}_{t\in[0,T]}$ driven by
$h^{\vartheta,\mathrm{Gau}}$. Since $h_0^{\vartheta,\mathrm{Gau}}=g_{\alpha}$, we have $(h_0^{\vartheta,\mathrm{Gau}}*g_t)(x) =
g_{\alpha+t}(x)$ and hence the corresponding drift of the conditioned dynamics is
\begin{align}
\mathring b_t^{\vartheta, \mathrm{Gau}}(x)
&:=
\nabla_x\log\big((h_0^{\vartheta,\mathrm{Gau}}*g_t)(x)\big)
=
\nabla_x\log\!\big(g_{\alpha+t}(x)\big)
=
-\frac{x}{\alpha+t},
\qquad t>0,\ x\in\R^2 \setminus \{0\}\, . \nonumber
\end{align}
Thus, in this \textbf{formal} setting, Theorem~\ref{ThmPropertiesRelativeh} yields the
following characteristics of $X$ under $\mathbb{P}_{x}^{T,\vartheta,\mathrm{Gau}}$.

\begin{enumerate}[(i)]

\item The terminal survival probability is given by
\begin{align}
\mathbb{P}_{x}^{T,\vartheta,\mathrm{Gau}}[\mathcal O_T^c]
\,=\,
\frac{g_{\alpha+T}(x)}{g_{\alpha+T}(x)+\mathsf V_T^{\vartheta,\mathrm{Gau}}(x)} \, . \nonumber
\end{align}

\item If
\(
\mathring{\mathbb{P}}_{x}^{T,\vartheta, \mathrm{Gau}}
:=
\mathbb{P}_{x}^{T,\vartheta,\mathrm{Gau}}\!\left[\,\cdot\,\middle|\,\mathcal O_T^c\right]
\)
denotes the conditioning of $\mathbb{P}_{x}^{T,\vartheta,\mathrm{Gau}}$ on $\mathcal O_T^c$,
then under $\mathring{\mathbb{P}}_{x}^{T,\vartheta, \mathrm{Gau}}$ the process
$X=\{X_t\}_{t\in[0,T]}$ satisfies the Brownian bridge type SDE
\begin{align}
dX_t
\,=\,
d\mathring{W}_t^{T,\mathrm{Gau}}
\,-\,
\frac{X_t}{\alpha+(T-t)}\,dt,
\qquad t\in[0,T)\,, \nonumber
\end{align}
where $\mathring{W}^{T,\mathrm{Gau}}$ is a $\mathring{\mathbb{P}}_{x}^{T,\vartheta, \mathrm{Gau}}$-Brownian motion.
In contrast to the Brownian bridge, the drift in the above SDE remains bounded as
$t\uparrow T$, so the process is not forced to hit a fixed endpoint and remains diffusive
up to time $T$. Consequently, $\mathring{\mathbb{P}}_{x}^{T,\vartheta, \mathrm{Gau}}$ coincides with
the law $\mathbb{P}_{x}^{T,\alpha\textup{-Br}}$ of a \emph{regularized Brownian bridge}
started from $x$ with smoothing parameter $\alpha>0$, given by
\begin{align}\label{RegBridgeLaw}
\mathbb{P}_{x}^{T,\alpha\textup{-Br}}(A)
\,:=\,
\int_{\R^2}
\mathbb P_{x\to y}^{\,T}(A)\,g_\alpha(y)\,dy,
\qquad A\in\mathcal F_T, 
\end{align}
where $\mathbb P_{x\to y}^{\,T}$ denotes the law of a two-dimensional Brownian bridge
from $x$ to $y$ over $[0,T]$. In particular, the conditional transition density under
$\mathring{\mathbb{P}}_{x}^{T,\vartheta, \mathrm{Gau}}$ admits the Doob transform representation~\eqref{SecondTrans}:
for all $0\le s<t<T$ and $x,y\in\R^2$,
\begin{align}
\mathlarger{\mathsf{\mathring{d}}}_{s,t}^{T,\mathrm{Gau}}(x,y)
\,=\,
\frac{g_{\alpha+T-t}(y)}{g_{\alpha+T-s}(x)}\,
g_{t-s}(x-y)\,. \nonumber
\end{align}

\item
Let $\mathring{\mathbb{P}}_{x}^{T,\vartheta, \mathrm{Gau},\mathsf{S}}
:=
\mathbb{P}_{x}^{T,\vartheta, \mathrm{Gau}}\!\left[\,\cdot\,\middle|\,\mathsf{S}<\tau\right]$
denote the conditioning of $\mathbb{P}_{x}^{T,\vartheta, \mathrm{Gau}}$ on the event
$\{\mathsf{S}<\tau\}$. Then, on the event $\{\mathsf{S}<\tau\}$, the stopped coordinate
process $\{X_{t\wedge \mathsf{S}}\}_{t\in[0,T]}$ has the same law under
$\mathring{\mathbb{P}}_{x}^{T,\vartheta, \mathrm{Gau},\mathsf{S}}$ as it does under the path measure
\begin{align}
\mathbb{E}_{x}^{T,\alpha\textup{-Br}}\!\bigg[\frac{g_{\alpha+T-\mathsf{S}}(X_{\mathsf{S}})+\mathsf V_{T-\mathsf{S}}^{\vartheta,\textup{Gau}}(X_{\mathsf{S}})}{g_{\alpha+T-\mathsf{S}}(X_{\mathsf{S}})}
\bigg]^{-1}\,
\frac{g_{\alpha+T-\mathsf{S}}(X_{\mathsf{S}})+\mathsf V_{T-\mathsf{S}}^{\vartheta,\textup{Gau}}(X_{\mathsf{S}})}{g_{\alpha+T-\mathsf{S}}(X_{\mathsf{S}})}\,
\mathbb{P}_{x}^{T,\alpha\textup{-Br}}\,,
\qquad \text{on }\{\mathsf{S}<\tau\}\, . \nonumber
\end{align}

\item
The conditional distribution of the random variable $\tau$ under
$\mathbb{P}_x^{\mathrm{Gau}}$ given $\mathcal O_T:=\{\tau\le T\}$ is absolutely
continuous on $(0,T)$, with density
\begin{align}
\frac{\mathbb{P}_x^{T,{\mathrm{Gau}}}\!\big[\,\tau\in dt\,\big|\,\mathcal{O}_T\big]}{dt}
\,=\,
\frac{1}{\mathsf V_T^{\vartheta,\textup{Gau}}(x)}\, g_t(x)\,\nu^{\vartheta,\mathrm{Gau}}_{\alpha}(T-t) \,,
\qquad t\in(0,T)\,, \nonumber
\end{align}

\end{enumerate}

\usetikzlibrary{decorations.pathreplacing}

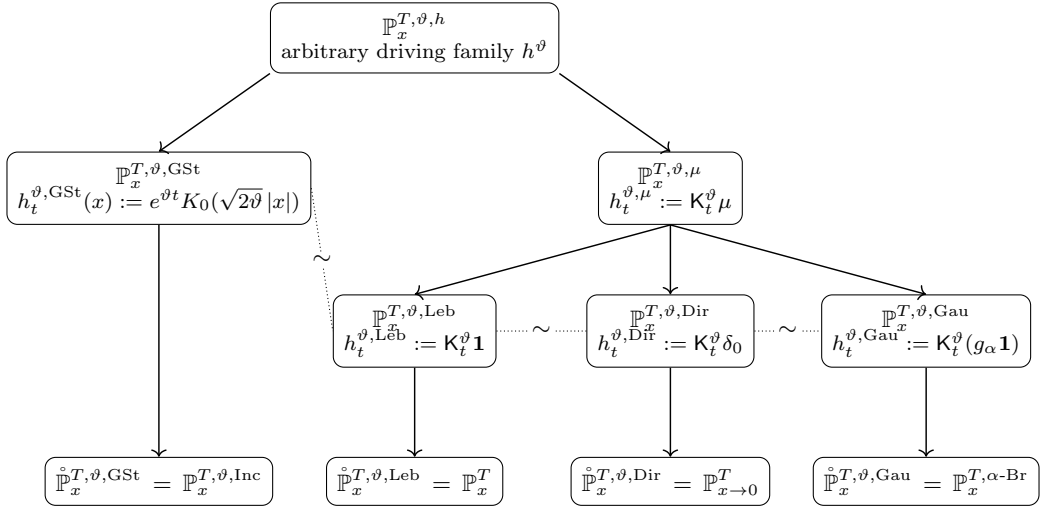
\begin{figure}[t]
\centering
\begin{tikzpicture}[font=\small,
  law/.style={draw, rounded corners, inner sep=4pt, align=center},
  arr/.style={->, line width=0.55pt}
]


\node[law] (Pgen) at (0.0, 4.4)
{$\mathbb{P}^{T,\vartheta,h}_x$\\[-2pt]
{\footnotesize arbitrary driving family $h^\vartheta$}};

\node[law] (PGSt) at (-3.4, 2.4)
{$\mathbb{P}^{T,\vartheta,\textup{GSt}}_x$\\[-2pt]
{\footnotesize $h_t^{\vartheta, \textup{GSt}}(x):=e^{\vartheta t}K_0(\sqrt{2\vartheta}\,|x|)$}};

\node[law] (Pmu) at (3.4, 2.4)
{$\mathbb{P}^{T,\vartheta,\mu}_x$\\[-2pt]
{\footnotesize $h_t^{\vartheta, \mu}:=\mathsf{K}_t^{\vartheta}\mu$}};

\node[law] (PLeb) at (0.0, 0.5)
{$\mathbb{P}^{T,\vartheta,\textup{Leb}}_x$\\[-2pt]
{\footnotesize $h_t^{\vartheta, \textup{Leb}}:=\mathsf{K}_t^{\vartheta}\mathbf{1}$}};

\node[law] (PDir) at (3.4, 0.5)
{$\mathbb{P}^{T,\vartheta,\textup{Dir}}_x$\\[-2pt]
{\footnotesize $h_t^{\vartheta, \textup{Dir}}:=\mathsf{K}_t^{\vartheta}\delta_0$}};

\node[law] (PGau) at (6.8, 0.5)
{$\mathbb{P}^{T,\vartheta,\textup{Gau}}_x$\\[-2pt]
{\footnotesize $h_t^{\vartheta, \textup{Gau}}:=\mathsf{K}_t^{\vartheta}(g_{\alpha}\mathbf{1})$}};

\node[law] (PGStC) at (-3.4, -1.5)
{$\mathring{\mathbb{P}}^{T,\vartheta,\textup{GSt}}_x \,=\, \mathbb{P}^{T,\vartheta,\textup{Inc}}_x$};

\node[law] (PLebC) at (0.0, -1.5)
{$\mathring{\mathbb{P}}^{T,\vartheta,\textup{Leb}}_x \,=\, \mathbb{P}^{T}_x$};

\node[law] (PDirC) at (3.4, -1.5)
{$\mathring{\mathbb{P}}^{T,\vartheta,\textup{Dir}}_x \,=\, \mathbb{P}^{T}_{x\to 0}$};

\node[law] (PGauC) at (6.8, -1.5)
{$\mathring{\mathbb{P}}^{T,\vartheta,\textup{Gau}}_x \,=\, \mathbb{P}^{T,\alpha\textup{-Br}}_x$};


\draw[arr] (Pgen.south west) -- (PGSt.north);
\draw[arr] (Pgen.south east) -- (Pmu.north);

\draw[arr] (Pmu.south) -- (PLeb.north);
\draw[arr] (Pmu.south) -- (PDir.north);
\draw[arr] (Pmu.south) -- (PGau.north);

\draw[arr] (PGSt.south) -- (PGStC.north);
\draw[arr] (PLeb.south) -- (PLebC.north);
\draw[arr] (PDir.south) -- (PDirC.north);
\draw[arr] (PGau.south) -- (PGauC.north);


\draw[densely dotted] (PGSt.east) -- (PLeb.west)
node[midway, fill=white, inner sep=1pt] {$\sim$};

\draw[densely dotted] (PLeb.east) -- (PDir.west)
node[midway, fill=white, inner sep=1pt] {$\sim$};

\draw[densely dotted] (PDir.east) -- (PGau.west)
node[midway, fill=white, inner sep=1pt] {$\sim$};

\end{tikzpicture}
\caption{The above diagram summarizes the relations among the laws appearing in Section~\ref{ExamplesDiffusion} and represents a \textbf{formal} application of Theorems~\ref{ThmExistenceRelativeh} and~\ref{ThmPropertiesRelativeh}; the notation is gathered in Notation~\ref{LawsNotation} for convenience.}
\label{LawsRelations}
\end{figure}

\begin{notation}\label{LawsNotation}
Figure~\ref{LawsRelations} summarizes the relations between the following laws.
\begin{itemize}
\renewcommand{\labelitemi}{---}

\item $\mathbb{P}^{T,\vartheta,h}_x$ denotes the law from Proposition~\ref{PropCPD}, with the dependence on the driving family $h$ suppressed when unambiguous.

\item
$\mathbb{P}^{T,\vartheta,\textup{Leb}}_x$,
$\mathbb{P}^{T,\vartheta,\textup{Dir}}_x$, and
$\mathbb{P}^{T,\vartheta,\textup{Gau}}_x$
denote the laws associated with the driving families
$h^{\vartheta,\textup{Leb}}$,
$h^{\vartheta,\textup{Dir}}$, and
$h^{\vartheta,\textup{Gau}}$
defined in~\eqref{Lebfamily}, \eqref{Dirfamily}, and \eqref{Gaufamily},
respectively, which are generated by the Lebesgue, Dirac, and Gaussian measures.

\item
The laws $\mathbb{P}^{T}_x$, $\mathbb{P}^{T}_{x\to y}$, and
$\mathbb{P}^{T,\alpha\textup{-Br}}_x$ denote, respectively, the two--dimensional
Wiener measure, Brownian bridge, and regularized Brownian bridge over $[0,T]$,
with the latter defined in~\eqref{RegBridgeLaw}. None of these laws depends on
$\vartheta$.

\item $\mathbb{P}^{T,\vartheta,\textup{GSt}}_x$ denotes the law associated to the ground-state-driven family $h^{\vartheta, \textup{GSt}}$ given in~\eqref{GStFamily}.

\item $\mathbb{P}^{T,\vartheta,\textup{Inc}}_x$ denotes the law corresponding to the family $h_t^{\textup{Inc}}(x):=\mathrm{e}^{\vartheta t}K_0(\sqrt{2\vartheta}\,|x|,\vartheta t)$,
where $K_0(z,y)$ is the incomplete Bessel function defined in~\eqref{DefIncBessel}.

\item $\mathring{\mathbb{P}} := \mathbb{P}[\,\cdot \mid \tau > T]$ denotes the law obtained by conditioning a given reference measure $\mathbb{P}$ on survival up to time $T$, that is, on the event that the diffusion does not hit the origin before time $T$.

\end{itemize}
\end{notation}

\section{Kolmogorov continuity criterion} \label{MarkoveFamilyProof}

We will prove Lemma~\ref{LemmaKolmogorovD} and Proposition~\ref{PropCPD} in this section.
Before doing so, we record the following observation, which will be used in the proof.

If $f:[0,\infty)\times\R^2\to\R$ is a twice-differentiable function with compact spatial support, then the forward equation~\eqref{KolmogorovForD} implies the backward identity
\begin{align}\label{KolmogorovForDBack}
\frac{\partial}{\partial h}\,
\int_{\R^2}\,  \mathlarger{\mathsf{d}}_{t,t+h}^{T,\vartheta}(x,y)\, f(t+h,y)\,dy
\Big|_{h=0}
\,=\,
\big(\mathscr{L}_x^{T-t,\vartheta} \, +\, \partial_t \big)\, f(t,x) \, ,
\end{align}
where the backward operator is given by $\mathscr{L}_x^{t,\vartheta}
:=\frac{1}{2}\Delta_x + b_{t}^\vartheta(x)\cdot\nabla_x$. Indeed, differentiating under the integral sign and using the forward
equation~\eqref{KolmogorovForD}, together with
$\mathlarger{\mathsf{d}}_{t,t}^{T,\vartheta}(x,\cdot)=\delta_x$,
yield the above backward relation.

\subsection[Properties]{Properties of $\mathsf{H}_t^\vartheta(x)$}

For $x\neq0$, let $P_x:=\frac{xx^\dagger}{|x|^2}$ denote the orthogonal projection onto $\mathrm{span}\{x\}$. A
direct calculation shows that the Jacobian matrix $\mathsf J_t^\vartheta(x)
:=
\nabla_x^\dagger \mathsf{H}_t^\vartheta(x)$ of $\mathsf{H}_t^\vartheta(x)$ given in~\eqref{DefUpsilonD}
admits the following orthogonal decomposition
\begin{align}\label{JacobianRepresentation}
\mathsf J_t^\vartheta(x)
 \, = \, 
\lambda_t^\vartheta(x)\,P_x
 \, + \, 
\lambda_t^{\vartheta,\perp}(x)\,(I_2-P_x) \, ,
\end{align}
where $\lambda_t^\vartheta(x)$ and $\lambda_t^{\vartheta,\perp}(x)$ are the
radial and tangential eigenvalues, respectively, given by,
\[
\lambda_t^{\vartheta,\perp}(x)=\bar{\mathsf{H}}_t^\vartheta(|x|),
\qquad
\lambda_t^\vartheta(x)
 \, = \, 
\bar{\mathsf{H}}_t^\vartheta(|x|)
 \, + \, 
x\cdot\nabla_x\bar{\mathsf{H}}_t^\vartheta(|x|)\,,
\]
where recall that $\bar{\mathsf{H}}_t^\vartheta$ is given in~\eqref{DefUpsilonDRadial}. The following Lemma is a direct consequence of the explicit formula
for $\bar{\mathsf{H}}_t^\vartheta$ and the logarithmic and large-distance asymptotics
\eqref{hSmallaAsympt}-\eqref{hInfinityAsympt} of the driving function
$h^\vartheta$.

\begin{lemma}\label{LemmaEigenvalueBound}
For every $L>0$ there exists a constant $C_L>0$ such that the radial and
tangential eigenvalues of the Jacobian matrix $\mathsf J_t^\vartheta(x)$ satisfy
\begin{align}\label{EigenvalueBoundD}
\lambda_T^{\vartheta,\perp}(x),\ \lambda_T^\vartheta(x)
\,\le\,
C_L,
\qquad
x\in\R^2,\quad T,\vartheta>0\ \text{with } \vartheta T \, \le \,  L  \, .
\end{align}
\end{lemma}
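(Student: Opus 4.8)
\noindent\textit{Proof plan.} By the orthogonal decomposition~\eqref{JacobianRepresentation} and the eigenvalue formulas recorded just above, everything reduces to bounding $\bar{\mathsf{H}}_T^\vartheta(|x|)$ and $\frac{d}{da}\big(a\,\bar{\mathsf{H}}_T^\vartheta(a)\big)\big|_{a=|x|}$ from above, starting from the explicit radial formula~\eqref{DefUpsilonDRadial}. Writing $u:=h_0^\vartheta*g_T$, I would first record a few elementary facts: $u$ is radially symmetric, positive and $C^\infty$ on all of $\R^2$ (a Gaussian mollification of the locally integrable radial function $h_0^\vartheta$), with radial profile $\bar u$ nonincreasing, since convolution with the Gaussian density preserves radial monotonicity; taking $s=0$, $t=T$ in the space-time harmonicity~\eqref{hspaceTimeHarmonic} and using that the interaction term in~\eqref{DefFullKer} is nonnegative (so $\mathsf{K}_T^\vartheta(x,y)\ge g_T(x-y)$) gives $h_T^\vartheta\ge u$, hence $\mathsf{p}_T^\vartheta:=u/h_T^\vartheta\in(0,1]$; and since $\mathsf{K}_T^\vartheta-g_T$ is itself radially nonincreasing in $x$, one even has $(\bar h_T^\vartheta)'\le\bar u'\le 0$ pointwise.

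For the tangential eigenvalue this is immediate: from~\eqref{DefUpsilonDRadial}, $\bar{\mathsf{H}}_T^\vartheta(a)=\mathsf{p}_T^\vartheta(a)+\tfrac{T\bar u'(a)}{a\,\bar h_T^\vartheta(a)}$ and the second summand is $\le 0$, so $\lambda_T^{\vartheta,\perp}(x)=\bar{\mathsf{H}}_T^\vartheta(|x|)\le\mathsf{p}_T^\vartheta(|x|)\le 1$; this half in fact holds with a constant independent of $L$. For the radial eigenvalue, set $m(a):=a\,\bar{\mathsf{H}}_T^\vartheta(a)=\big(a\bar u(a)+T\bar u'(a)\big)/\bar h_T^\vartheta(a)$, so that $\lambda_T^\vartheta(x)=m'(|x|)$ and, since $\bar{\mathsf{H}}_T^\vartheta\le 1$, $m(a)\le a$ for every $a>0$. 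Differentiating (differentiation under the integral sign being legitimate by Convention~\ref{ConventionAnalytic}),
\[
\lambda_T^\vartheta(x)
=\frac{\bar u(a)+a\bar u'(a)+T\bar u''(a)}{\bar h_T^\vartheta(a)}
-m(a)\,\big(\log\bar h_T^\vartheta\big)'(a),
\qquad a=|x|
\]
(alternatively one factors out $\mathsf{p}_T^\vartheta(x)$ using the gradient identity~\eqref{GradPD}, which makes the harmless factor $\mathsf{p}_T^\vartheta\le1$ explicit).

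I would then bound $m'(a)$ from above on three ranges. For $a\le\delta$, the logarithmic blow-up~\eqref{hSmallaAsympt}, $\bar h_T^\vartheta(a)\sim c^{T,\vartheta}(T)\log\frac1a$, makes every factor $1/\bar h_T^\vartheta$ small and forces $(\log\bar h_T^\vartheta)'(a)=O\!\big(1/(a\log\tfrac1a)\big)$, so that $m(a)(\log\bar h_T^\vartheta)'(a)$ stays bounded (using $m(a)\le a$) while $\bar u,\bar u',\bar u''$ are bounded near the origin; hence $\lambda_T^\vartheta(x)$ is bounded there, and in fact tends to $0$. On an intermediate annulus $\delta\le a\le R$ one uses the joint continuity of Definition~\ref{SpaceTimeHEFDef}(iv) (and, if needed, the diffusive scaling $\mathsf{K}_T^\vartheta(x,y)=T^{-1}\mathsf{K}_1^{\vartheta T}(x/\sqrt T,y/\sqrt T)$ together with a corresponding rescaling of the driving family, under which the Jacobian eigenvalues are invariant, to reduce to $T=1$ and coupling constant in $(0,L]$): there $\bar u,\bar h_T^\vartheta$ and their first two radial derivatives are bounded and bounded below by positive constants, so $\lambda_T^\vartheta(x)$ is trivially bounded. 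For $a\ge R$ one invokes the large-distance comparison~\eqref{hInfinityAsympt}, $\bar h_T^\vartheta\asymp\bar\Psi^{T,\vartheta}$, together with the analogous comparison for $\bar u$ and its first two derivatives obtained by Gaussian averaging of $h_0^\vartheta$; the key point is that the two terms in the displayed identity each may grow linearly in $a$, but their leading parts cancel, because $(\log\bar u)'$ and $(\log\bar h_T^\vartheta)'$ have the same leading large-$a$ asymptotics (that of $(\log\bar\Psi^{T,\vartheta})'$), leaving a bounded remainder.

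The hard part will be exactly this last regime. Crude majorization is too lossy — for instance, replacing $\nabla\!\cdot(\hat h_0^\vartheta*g_T)$ by $2\,(h_0^\vartheta*g_T)$ discards a negative contribution of the same order as the term it is subtracted from — so one genuinely has to retain the cancellation between $a\bar u'(a)/\bar h_T^\vartheta(a)$ and $-m(a)(\log\bar h_T^\vartheta)'(a)$ at large $a$ and make the resulting estimate quantitative uniformly over $\vartheta T\le L$; this is where the full strength of hypothesis~\eqref{hInfinityAsympt} on the driving family is used.
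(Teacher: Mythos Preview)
Your approach is exactly what the paper indicates: its entire ``proof'' is the one-sentence remark that the lemma ``is a direct consequence of the explicit formula for $\bar{\mathsf{H}}_t^\vartheta$ and the logarithmic and large-distance asymptotics \eqref{hSmallaAsympt}--\eqref{hInfinityAsympt}.'' You have gone well beyond this by actually carrying out the tangential bound (your observation that $\bar u'\le 0$ and $h_T^\vartheta\ge u$ give $\lambda_T^{\vartheta,\perp}\le 1$ is clean and correct, and sharper than the paper bothers to state) and by honestly isolating where the radial bound is non-trivial.

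One genuine gap to flag in your large-$a$ analysis: the cancellation you need between $a\bar u'(a)/\bar h_T^\vartheta(a)$ and $-m(a)(\log\bar h_T^\vartheta)'(a)$ requires control on $(\bar h_T^\vartheta)'$, but hypothesis~\eqref{hInfinityAsympt} only bounds $\bar h_T^\vartheta$ itself, not its derivative, and the asserted matching of $(\log\bar u)'$ with $(\log\bar h_T^\vartheta)'$ at infinity does not follow from two-sided bounds on the functions alone. To close this you would have to go back to the representation $h_T^\vartheta=\mathsf{K}_T^\vartheta h_0^\vartheta$ and differentiate the explicit kernel~\eqref{DefFullKer} (the Gaussian factors make this feasible), or else add derivative control to the standing hypotheses. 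The paper does not address this either --- it operates under Convention~\ref{ConventionAnalytic} and treats the lemma as a formal consequence --- so your sketch is already more careful than the source on this point.
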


The proof of the following lemma follows from differentiating the explicit representation~\eqref{DefUpsilonD} of $\mathsf{H}_t^\vartheta$ and applying a standard second-order chain rule.
\begin{lemma}\label{LemmaFlowPDEBullets} 
The map $x\mapsto \mathsf{H}_t^\vartheta(x)$ satisfies the following
properties.
\begin{enumerate}[(i)]
\item 
The $\R^2$-valued map $(t,x)\mapsto \mathsf{H}_t^\vartheta(x)$ satisfies
\begin{align}\label{DiffEquationD}
\partial_t \mathsf{H}_t^\vartheta(x)
\,=\,\mathscr{L}_x^{t,\vartheta}\mathsf{H}_t^\vartheta(x),
\qquad (t,x) \, \in \, (0,T) \, \times \, (\R^2\setminus\{0\})\,,
\end{align}
where $\mathscr{L}_x^{t,\vartheta}:=\frac12\Delta_x+b_t^\vartheta(x)\cdot\nabla_x$
with $b_t^\vartheta(x)=\nabla_x\log h_t^\vartheta(x)$.

\item 
For every $G\in C^2(\R^2)$ and $(t,x)\in(0,T)\times(\R^2\setminus\{0\})$,
\begin{align}\label{ChainRuleD}
\Big(\mathscr{L}_{x}^{t,\vartheta}-\partial_t\Big)\,
G\!\big(\mathsf{H}_{t}^{\vartheta}(x)\big)
& \, = \, 
\frac12\Big[
\big(\lambda_t^\vartheta(x)\big)^2
\big(\hat x\cdot\nabla_z\big)^2 G(z)
 \, + \, 
\big(\lambda_t^{\vartheta,\perp}(x)\big)^2
\big(\hat x^\perp\cdot\nabla_z\big)^2 G(z)
\Big]\Big|_{z=\mathsf{H}_{t}^{\vartheta}(x)}\,,
\end{align}
where $\hat x:=x/|x|$ and $\hat x^\perp:=x^\perp/|x|$, with $x^\perp$ denoting the
vector obtained by a $90^\circ$ clockwise rotation of $x$, so that
$\{\hat x,\hat x^\perp\}$ forms an orthonormal basis of $\R^2$.
\end{enumerate}
\end{lemma}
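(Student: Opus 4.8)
The plan is to prove (i) by recognizing $\mathsf{H}_t^\vartheta$ as a ground-state (Doob) transform of heat-equation solutions, and then to obtain (ii) as the vector-valued chain rule for the parabolic operator $\mathscr{L}_x^{t,\vartheta}-\partial_t$ applied to the composite $G\circ\mathsf{H}_t^\vartheta$, using the Jacobian decomposition~\eqref{JacobianRepresentation}.

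For part (i), I would write $\mathsf{H}_t^\vartheta(x)=u_t(x)/h_t^\vartheta(x)$, where $u_t:=\hat h_0^\vartheta*g_t$ is the $\R^2$-valued function with components $u_t^{(k)}(x)=\int_{\R^2}y_k\,h_0^\vartheta(y)\,g_t(x-y)\,dy$. Since $g_t$ solves $\partial_t g_t=\tfrac12\Delta g_t$ and differentiation under the integral sign is justified for $t>0$ by Convention~\ref{ConventionAnalytic}, each component $u_t^{(k)}$ solves the heat equation $\partial_t u_t^{(k)}=\tfrac12\Delta u_t^{(k)}$ on $(0,\infty)\times\R^2$, while $h_t^\vartheta$ solves $\partial_t h_t^\vartheta=\tfrac12\Delta h_t^\vartheta$ on $(0,T)\times(\R^2\setminus\{0\})$ by~\eqref{DEh}. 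Then the standard quotient identity applies componentwise: for positive scalars $u,h$ with $\partial_t u=\tfrac12\Delta u$ and $\partial_t h=\tfrac12\Delta h$, expanding $\Delta u=\Delta\!\big((u/h)h\big)=\big(\Delta(u/h)\big)h+2\nabla(u/h)\cdot\nabla h+(u/h)\Delta h$ and cancelling the $(u/h)\partial_t h$ and $\tfrac12(u/h)\Delta h$ terms gives $\partial_t(u/h)=\tfrac12\Delta(u/h)+(\nabla\log h)\cdot\nabla(u/h)$. Taking $u=u_t^{(k)}$, $h=h_t^\vartheta$, and recalling $b_t^\vartheta=\nabla\log h_t^\vartheta$ from~\eqref{DefDriftFunD}, this is exactly~\eqref{DiffEquationD}. (Alternatively, one can differentiate~\eqref{DefUpsilonD} directly, using $\partial_t(h_0^\vartheta*g_t)=\tfrac12\Delta(h_0^\vartheta*g_t)$ and commuting $\nabla$ with $\partial_t$, but the quotient argument is cleaner.)

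For part (ii), set $F(t,x):=G\big(\mathsf{H}_t^\vartheta(x)\big)$ and abbreviate $H:=\mathsf{H}_t^\vartheta(x)$, $\mathsf J:=\mathsf J_t^\vartheta(x)$. The chain rule gives $\partial_t F=\nabla G(H)\cdot\partial_t H$, $b_t^\vartheta\cdot\nabla_x F=\sum_k\partial_k G(H)\,\big(b_t^\vartheta\cdot\nabla_x H^{(k)}\big)$, and
\[
\Delta_x F=\sum_{k,l}\partial_k\partial_l G(H)\,\big(\nabla_x H^{(k)}\cdot\nabla_x H^{(l)}\big)+\sum_k\partial_k G(H)\,\Delta_x H^{(k)}.
\]
Since $\nabla_x H^{(k)}\cdot\nabla_x H^{(l)}=(\mathsf J\mathsf J^\dagger)_{kl}$, collecting terms yields
\[
\mathscr{L}_x^{t,\vartheta}F=\tfrac12\sum_{k,l}\partial_k\partial_l G(H)\,(\mathsf J\mathsf J^\dagger)_{kl}+\sum_k\partial_k G(H)\,\mathscr{L}_x^{t,\vartheta}H^{(k)},
\]
and by part (i) the second sum equals $\sum_k\partial_k G(H)\,\partial_t H^{(k)}=\partial_t F$, so $(\mathscr{L}_x^{t,\vartheta}-\partial_t)F=\tfrac12\sum_{k,l}\partial_k\partial_l G(H)\,(\mathsf J\mathsf J^\dagger)_{kl}$. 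It remains to insert~\eqref{JacobianRepresentation}: because $\mathsf J_t^\vartheta$ is the Jacobian of the radial vector field $\mathsf{H}_t^\vartheta(x)=\bar{\mathsf{H}}_t^\vartheta(|x|)\,x$ it is symmetric, so $\mathsf J\mathsf J^\dagger=\mathsf J^2$, and since $P_x=\hat x\hat x^\dagger$ and $I_2-P_x=\hat x^\perp(\hat x^\perp)^\dagger$ are complementary orthogonal projections, $\mathsf J^2=\big(\lambda_t^\vartheta(x)\big)^2\hat x\hat x^\dagger+\big(\lambda_t^{\vartheta,\perp}(x)\big)^2\hat x^\perp(\hat x^\perp)^\dagger$. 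Pairing with $\sum_{k,l}\partial_k\partial_l G(H)$ then produces exactly the right-hand side of~\eqref{ChainRuleD}.

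The computations are routine; the only points that need care are the justification of differentiation under the integral sign (covered by Convention~\ref{ConventionAnalytic}) and the bookkeeping of the vector-valued chain rule. The one genuinely structural input is the symmetry of $\mathsf J_t^\vartheta(x)$, which is what collapses the cross terms, lets us replace $\mathsf J\mathsf J^\dagger$ by $\mathsf J^2$, and makes the two eigen-directions $\hat x,\hat x^\perp$ appear in~\eqref{ChainRuleD}; this symmetry is immediate from the radial form of $\mathsf{H}_t^\vartheta$ recorded in~\eqref{DefUpsilonD}–\eqref{DefUpsilonDRadial}.
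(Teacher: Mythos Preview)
Your proposal is correct and follows essentially the same approach as the paper, which only records that the lemma ``follows from differentiating the explicit representation~\eqref{DefUpsilonD} of $\mathsf{H}_t^\vartheta$ and applying a standard second-order chain rule.'' Your quotient-of-heat-solutions argument for (i) is a clean variant of the direct differentiation the paper alludes to (and you note the alternative yourself), and your chain-rule computation for (ii), together with the use of the symmetric spectral decomposition~\eqref{JacobianRepresentation} to reduce $\mathsf{J}\mathsf{J}^\dagger$ to $\lambda^2 P_x+(\lambda^\perp)^2(I-P_x)$, is exactly the intended calculation.
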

In particular, since $\big(b_t^\vartheta(x)\cdot\nabla\big)\mathsf{H}_t^\vartheta(x)
=\mathsf J_t^\vartheta(x)\,b_t^\vartheta(x)$, the diffusion equation~\eqref{DiffEquationD} can be rewritten as
\begin{align}\label{UspsilonPDE}
\partial_t\mathsf{H}_t^\vartheta(x)
 \, = \, 
\frac12\,(\Delta\mathsf{H}_t^\vartheta)(x)
 \, + \, 
 \mathsf J_t^\vartheta(x)\,b_t^\vartheta(x) \, .
\end{align}

\subsection{Proof of lemma~\ref{LemmaKolmogorovD}}\label{LemmaKolmogorovDProof}

\begin{proof}
Fix $0\le s<t\le T$. Since the transition density
$\mathlarger{\mathsf{d}}_{s,t}^{T,\vartheta}(x,y)$ defined in~\eqref{FirstTrans}
satisfies $\mathlarger{\mathsf{d}}_{s,t}^{T,\vartheta}
=\mathlarger{\mathsf{d}}_{0,t-s}^{T-s,\vartheta}$, it suffices to prove the claim
for $s=0$. Let $m\in\N_0$ and define
\[
G_m^{T,\vartheta}(x,z):=\big|z-\mathsf{H}_T^\vartheta(x)\big|^{2m},
\qquad
\mathbf H_m^{T,\vartheta}(t,x)
 \, := \, 
 \int_{\R^2} \, \mathlarger{\mathsf d}^{T,\vartheta}_{0,t}(x,y)\,
G_m^{T,\vartheta}\!\big(x,\mathsf{H}_{T-t}^\vartheta(y)\big)\,dy  \, .
\]

Let $C_L>0$ be the constant from~\eqref{EigenvalueBoundD} and set
$\mathbf c_L:=2C_L^2$.
We first show that for all $x\in\R^2$, $m\in\N$, and $T,\vartheta>0$ with
$\vartheta T\le L$,
\begin{align}\label{HSUFFICIENTD}
\partial_t\, \mathbf{H}_{m}^{T,\vartheta}(t,x)
\,\le\,
\mathbf{c}_L \,m^2\,\mathbf{H}_{m-1}^{T,\vartheta}(t,x)\,.
\end{align}
Since $\mathbf H_0^{T,\vartheta}\equiv 1$ and $\mathbf H_m^{T,\vartheta}(0,x)=0$
for $m\ge1$, iterating \eqref{HSUFFICIENTD} yields
$\mathbf H_m^{T,\vartheta}(t,x)\le \mathbf c_L^{\,m}(m!)^2\,t^m$, which is exactly
the desired estimate (with $t$ replaced by $t-s$).

To show \eqref{HSUFFICIENTD}, differentiate under the integral sign and use the
forward equation~\eqref{KolmogorovForD} together with the backward identity
\eqref{KolmogorovForDBack} (i.e.\ integration by parts in $y$) to obtain
\begin{align*}
\partial_t \mathbf H_m^{T,\vartheta}(t,x)
& \, = \, 
\int_{\R^2}\mathlarger{\mathsf d}^{T,\vartheta}_{0,t}(x,y)\,
\Big(\mathscr{L}_y^{T-t,\vartheta}+\partial_t\Big)
\,G_m^{T,\vartheta}\!\big(x,\mathsf{H}_{T-t}^\vartheta(y)\big)\,dy  \, .
\end{align*}
Applying the chain rule \eqref{ChainRuleD} with $G(z)=G_m^{T,\vartheta}(x,z)$
gives
\begin{align*}
\partial_t \mathbf H_m^{T,\vartheta}(t,x)
&=
\frac12\int_{\R^2}\mathlarger{\mathsf d}^{T,\vartheta}_{0,t}(x,y)\,
\Big[
\big(\lambda_{T-t}^\vartheta(y)\big)^2
\big(\hat y\cdot\nabla_z\big)^2 G_m^{T,\vartheta}(x,z)
+
\big(\lambda_{T-t}^{\vartheta,\perp}(y)\big)^2
\big(\hat y^\perp\cdot\nabla_z\big)^2 G_m^{T,\vartheta}(x,z)
\Big]\Big|_{z=\mathsf{H}_{T-t}^\vartheta(y)}dy ,
\end{align*}
where $\hat y:=y/|y|$ and $\hat y^\perp:=y^\perp/|y|$ (for $y\neq0$).
Since $z\mapsto G_m^{T,\vartheta}(x,z)=|z-\mathsf{H}_T^\vartheta(x)|^{2m}$ is convex for
$m\ge1$, its second directional derivatives are nonnegative. Hence, using
\eqref{EigenvalueBoundD} and $\mathbf c_L=2C_L^2$, we obtain
\begin{align*}
\partial_t  \, \mathbf H_m^{T,\vartheta}(t,x)
& \, \le \, 
\frac12\int_{\R^2}\mathlarger{\mathsf d}^{T,\vartheta}_{0,t}(x,y)\,
C_L^2\Big[
\big(\hat y\cdot\nabla_z\big)^2 G_m^{T,\vartheta}(x,z)
 \, + \, 
\big(\hat y^\perp\cdot\nabla_z\big)^2 G_m^{T,\vartheta}(x,z)
\Big]\Big|_{z=\mathsf{H}_{T-t}^\vartheta(y)} \, dy \\
& \, = \, 
\frac{C_L^2}{2}\int_{\R^2}\mathlarger{\mathsf d}^{T,\vartheta}_{0,t}(x,y)\,
\Delta_z G_m^{T,\vartheta}(x,z)\Big|_{z=\mathsf{H}_{T-t}^\vartheta(y)} \, dy  \, .
\end{align*}
A direct computation yields $\Delta_z G_m^{T,\vartheta}(x,z)
=
4m^2\,|z-\mathsf{H}_T^\vartheta(x)|^{2(m-1)}
=
4m^2\,G_{m-1}^{T,\vartheta}(x,z)$, and therefore
\[
\partial_t  \, \mathbf H_m^{T,\vartheta}(t,x)
 \, \le \, 
2C_L^2\,m^2 
\int_{\R^2} \, \mathlarger{\mathsf d}^{T,\vartheta}_{0,t}(x,y)\,
G_{m-1}^{T,\vartheta}\!\big(x,\mathsf{H}_{T-t}^\vartheta(y)\big)\,dy
 \, = \, 
\mathbf c_L\,m^2\,\mathbf H_{m-1}^{T,\vartheta}(t,x) \, ,
\]
which is \eqref{HSUFFICIENTD}. This completes the proof.
\end{proof}

\subsection{Proof of Proposition~\ref{PropCPD}}\label{PropCPDProof}

\begin{proof}[Proof of Proposition~\ref{PropCPD}]
Fix $T,\vartheta>0$ and $x\in\R^2$, and assume Hypothesis~\ref{H2}.
For $0\le t_1<\cdots<t_n\le T$ and a bounded Borel function
$\varphi:(\R^2)^n\to\R$, define
\begin{align}\label{FDDdef}
\mathbb{E}^{T,\vartheta}_{x}\big[\varphi(X_{t_1},\ldots,X_{t_n})\big]
 \, := \, 
\int_{(\R^2)^n} \, 
\varphi(x_1,\ldots,x_n)\,
\prod_{k=1}^n \, 
\mathlarger{\mathsf{d}}_{t_{k-1},t_k}^{T,\vartheta}(x_{k-1},x_k)\,
dx_1\cdots dx_n \, ,
\end{align}
with $t_0:=0$ and $x_0:=x$.
The family of finite-dimensional distributions in~\eqref{FDDdef} is consistent
by Lemma~\ref{LemTranKern}. Hence, by Kolmogorov's extension theorem, there
exists a probability measure $\mathbb{P}^{T,\vartheta}_x$ on
$\big(\boldsymbol{\Upsilon},\mathcal{B}(\boldsymbol{\Upsilon})\big)$ with these
finite-dimensional marginals. In particular, $X_0=x$
$\mathbb{P}^{T,\vartheta}_x$--a.s.

Define the transformed process $Y_t^{T,\vartheta}:=\mathsf{H}_{T-t}^\vartheta(X_t)$ for
$t\in[0,T]$. By Lemma~\ref{LemmaKolmogorovD}, for every $L>0$ there exists
$\mathbf{c}_L>0$ such that, whenever $\vartheta T\le L$,
$$
\mathbb{E}^{T,\vartheta}_{x}\big[\lvert Y_t^{T,\vartheta}-Y_s^{T,\vartheta}\rvert^{2m}\big]
 \, \le \,  \mathbf{c}_L^{\,m}(m!)^2 (t-s)^m,
\qquad 0\le s<t\le T \, ,\ m\in\N \,  .
$$
Kolmogorov's continuity criterion therefore yields a modification of
$\{Y_t^{T,\vartheta}\}_{t\in[0,T]}$ with continuous sample paths. By
Hypothesis~\ref{H2}, the inverse maps $\mathsf{G}_{T-t}^\vartheta$ are continuous
and locally Lipschitz on compact subsets of $\R^2\setminus\{0\}$, uniformly in
$t\in[0,T]$. It follows that $X_t=\mathsf{G}_{T-t}^\vartheta\!\big(Y_t^{T,\vartheta}\big)$,
$t\in[0,T]$, admits a continuous modification. We henceforth identify
$\mathbb{P}^{T,\vartheta}_x$ with this continuous-path version, i.e.\ as a
probability measure on $\boldsymbol{\Upsilon}=C([0,T];\R^2)$.

The Markov property follows from the construction of the finite-dimensional
distributions. Indeed, let $0\le s<t\le T$ and let $F$ be a bounded
$\mathcal{F}_s$-measurable random variable. For any bounded Borel function
$\psi:\R^2\to\R$, the definition of the finite-dimensional distributions
\eqref{FDDdef} and the consistency relation of the kernels from
Lemma~\ref{LemTranKern} yield
$$
\mathbb{E}^{T,\vartheta}_{x}\big[F\,\psi(X_t)\big]
 \, = \, 
\mathbb{E}^{T,\vartheta}_{x}\Big[
F\int_{\R^2} \, 
\mathlarger{\mathsf{d}}_{s,t}^{T,\vartheta}(X_s,y)\,\psi(y)\,dy
\Big] \, .
$$
This identifies the conditional expectation $\mathbb{E}^{T,\vartheta}_{x}\!\big[\psi(X_t)\mid \mathcal{F}_s\big]
=
\int_{\R^2}
\mathlarger{\mathsf{d}}_{s,t}^{T,\vartheta}(X_s,y)\,\psi(y)\,dy$,
and shows that $X$ is a time-inhomogeneous Markov process with transition
density $\mathlarger{\mathsf{d}}_{s,t}^{T,\vartheta}$.

Uniqueness of $\mathbb{P}^{T,\vartheta}_x$ follows because any probability measure
on $(\boldsymbol{\Upsilon},\mathcal{B}(\boldsymbol{\Upsilon}))$ under which $X_0=x$
and $X$ is Markov with transition density $\mathlarger{\mathsf{d}}^{T,\vartheta}$
must have the finite-dimensional distributions given by~\eqref{FDDdef}.

To see the continuous dependence on parameters, let
$(x_n,T_n,\vartheta_n)\to(x,T,\vartheta)$ with $T_n,\vartheta_n>0$.
By the explicit representation of the kernels
$\mathlarger{\mathsf{d}}^{T,\vartheta}$ in~\eqref{FirstTrans},
the continuity of $h^\vartheta$ in $(x,\vartheta,t)$ from
Definition~\ref{SpaceTimeHEFDef}\emph{(iv)},
and dominated convergence justified by
Convention~\ref{ConventionAnalytic},
the finite-dimensional distributions~\eqref{FDDdef}
depend continuously on $(x,T,\vartheta)$. Moreover,
Lemma~\ref{LemmaKolmogorovD} yields Kolmogorov moment bounds that are uniform on
compact parameter ranges $\{(T,\vartheta):\vartheta T\le L\}$, and therefore imply
tightness of $\{\mathbb{P}^{T_n,\vartheta_n}_{x_n}\}_n$ on $\boldsymbol{\Upsilon}$.
Consequently, $\mathbb{P}^{T_n,\vartheta_n}_{x_n}\Rightarrow \mathbb{P}^{T,\vartheta}_{x}$
as $n\to\infty$.

Finally, as $\vartheta\searrow 0$, the kernels $\mathlarger{\mathsf{d}}^{T,\vartheta}$
converge, in the sense of finite-dimensional distributions, to the Gaussian heat
kernel transition densities, and hence
$\mathbb{P}^{T,\vartheta}_{x}\Rightarrow \mathbb{P}_x$, the Wiener measure started
from $x$, on $\boldsymbol{\Upsilon}$.
\end{proof}

\section[SDE]{A weak solution to the SDE (\ref{SDEToSolve})}\label{AppendixWeakSolConstD}
Given a Borel probability measure $\mu$ on $\R^2$, consider the filtered
probability space
\[(\boldsymbol{\Upsilon} \, ,\mathcal{B}^{T,\vartheta}_\mu \, ,
\{\mathcal{F}_t^{T,\vartheta,\mu}\}_{t\in[0,T]} \, ,\mathbb P^{T,\vartheta}_\mu)\,,
\]
where the augmented $\sigma$-algebra $\mathcal{B}^{T,\vartheta}_\mu$ and the
augmented filtration $\{\mathcal{F}_t^{T,\vartheta,\mu}\}_{t\in[0,T]}$ are defined
as in~\eqref{AugmentedD}. The following lemma and its corollary concern the
construction of the two-dimensional Brownian motion $W^{T,\vartheta}$ appearing
in Proposition~\ref{PropStochPreD}.

\begin{lemma}\label{LemmaMartID}
Fix $T,\vartheta>0$, and let $\mu$ be a Borel probability measure on $\R^2$ such that
$\int_{\R^2}|x|^2\,\mu(dx)<\infty$.
Assume Hypothesis~\ref{H2}.
Define $Y_t^{T,\vartheta}:=\mathsf{H}_{T-t}^\vartheta(X_t)$ for $t\in[0,T]$.
Then $\{Y_t^{T,\vartheta}\}_{t\in[0,T]}$ is a continuous square-integrable
$\mathbb{P}^{T,\vartheta}_\mu$-martingale with respect to
$\{\mathcal{F}_t^{T,\vartheta,\mu}\}_{t\in[0,T]}$.
Moreover, for every $u\in\R^2$, the quadratic variation of the real-valued martingale $u\cdot Y^{T,\vartheta}$ is given by
\[
\big\langle u\cdot Y^{T,\vartheta}\big\rangle_t
 \, = \, 
\int_0^t  \, \big\|\mathsf{J}_{T-s}^\vartheta(X_s)\,u\big\|_2^2\,ds \, ,
\qquad t\in[0,T] \, .
\]
\end{lemma}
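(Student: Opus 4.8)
The plan is to handle the martingale assertion and the bracket formula separately, the latter via the second-order chain rule of Lemma~\ref{LemmaFlowPDEBullets}(ii) combined with a Dynkin-type identity. For the martingale assertion, continuity of the sample paths of $Y^{T,\vartheta}$ is already part of the construction in Proposition~\ref{PropCPD}: the continuous version of the coordinate process is produced there by applying Kolmogorov's criterion to $Y^{T,\vartheta}$ via the moment bound of Lemma~\ref{LemmaKolmogorovD} and then inverting $\mathsf{H}_{T-t}^\vartheta$; mixing over $\mu$ gives $\mathbb{P}^{T,\vartheta}_\mu$-a.s.\ continuity. For square-integrability I would combine Lemma~\ref{LemmaKolmogorovD} at $m=1$, $s=0$, which gives $\mathbb{E}^{T,\vartheta}_x[|Y_t^{T,\vartheta}-\mathsf{H}_T^\vartheta(x)|^2]\le\mathbf{c}_L\,t$ for $\vartheta T\le L$, with the linear growth bound $|\mathsf{H}_T^\vartheta(x)|\le C_L|x|$ (read off from the radial formula~\eqref{DefUpsilonDRadial}, the identity $\bar{\mathsf{H}}_T^\vartheta(|x|)=\lambda_T^{\vartheta,\perp}(x)$, and the eigenvalue bound~\eqref{EigenvalueBoundD}) and the hypothesis $\int_{\R^2}|x|^2\,\mu(dx)<\infty$; integrating over $\mu$ controls $\sup_{t\in[0,T]}\mathbb{E}^{T,\vartheta}_\mu[|Y_t^{T,\vartheta}|^2]$. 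The martingale identity then follows at once from the space-time harmonicity~\eqref{SpaceTimeharmonicD} of $\mathsf{H}^\vartheta$ with respect to $\mathlarger{\mathsf d}^{T,\vartheta}$ and the Markov property of Proposition~\ref{PropCPD}: for $0\le s<t\le T$,
\[
\mathbb{E}^{T,\vartheta}_\mu\big[Y_t^{T,\vartheta}\,\big|\,\mathcal{F}_s\big]
\,=\,
\int_{\R^2}\mathlarger{\mathsf d}_{s,t}^{T,\vartheta}(X_s,y)\,\mathsf{H}_{T-t}^\vartheta(y)\,dy
\,=\,
\mathsf{H}_{T-s}^\vartheta(X_s)\,=\,Y_s^{T,\vartheta},
\]
and passing to the augmented filtration changes nothing.

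For the bracket, fix $u\in\R^2$ and set $G(z):=(u\cdot z)^2$ and $F(t,x):=G(\mathsf{H}_{T-t}^\vartheta(x))$, so $F(t,X_t)=(u\cdot Y_t^{T,\vartheta})^2$. Since $\nabla_z^2G\equiv2\,uu^\dagger$, inserting this $G$ into~\eqref{ChainRuleD} and using the orthogonal decomposition~\eqref{JacobianRepresentation} of the Jacobian gives, for $x\neq0$ and $\tau\in(0,T)$,
\[
\big(\mathscr{L}_x^{\tau,\vartheta}-\partial_\tau\big)G\big(\mathsf{H}_\tau^\vartheta(x)\big)
\,=\,
\big(\lambda_\tau^\vartheta(x)\big)^2(\hat x\cdot u)^2+\big(\lambda_\tau^{\vartheta,\perp}(x)\big)^2(\hat x^\perp\cdot u)^2
\,=\,
\big\|\mathsf J_\tau^\vartheta(x)\,u\big\|_2^2
\]
(the second directional derivatives of the quadratic $G$ are constant, so the evaluation point $z=\mathsf{H}_\tau^\vartheta(x)$ is immaterial here). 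Writing $\tau=T-t$, this reads $(\partial_t+\mathscr{L}_x^{T-t,\vartheta})F(t,x)=\|\mathsf J_{T-t}^\vartheta(x)u\|_2^2$ for $x\neq0$. I would then run a Dynkin-type computation: for fixed $0\le s<t\le T$ and $x\in\R^2$, let $\rho(r):=\int_{\R^2}\mathlarger{\mathsf d}_{s,r}^{T,\vartheta}(x,y)\,F(r,y)\,dy$ on $[s,t]$, so $\rho(s)=F(s,x)$ (the kernel at coincident times being $\delta_x$, cf.~Convention~\ref{ConventionAnalytic}); differentiating in $r$, using the forward equation~\eqref{KolmogorovForD} for $\partial_r\mathlarger{\mathsf d}_{s,r}^{T,\vartheta}(x,\cdot)$ and integrating by parts in $y$ (twice for the Laplacian term, once for the drift term),
\[
\rho'(r)
\,=\,
\int_{\R^2}\mathlarger{\mathsf d}_{s,r}^{T,\vartheta}(x,y)\big[\mathscr{L}_y^{T-r,\vartheta}F(r,y)+\partial_rF(r,y)\big]dy
\,=\,
\int_{\R^2}\mathlarger{\mathsf d}_{s,r}^{T,\vartheta}(x,y)\,\big\|\mathsf J_{T-r}^\vartheta(y)\,u\big\|_2^2\,dy,
\]
the set $\{y=0\}$ being Lebesgue-null. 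Integrating from $s$ to $t$, replacing $x$ by $X_s$, and using the Markov property and Fubini's theorem (the integrand is $\le C_L^2|u|^2$ by~\eqref{EigenvalueBoundD}) shows that $(u\cdot Y_t^{T,\vartheta})^2-\int_0^t\|\mathsf J_{T-r}^\vartheta(X_r)u\|_2^2\,dr$ is a $\mathbb{P}^{T,\vartheta}_\mu$-martingale.

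To conclude, since $u\cdot Y^{T,\vartheta}$ is a continuous square-integrable martingale, $(u\cdot Y_t^{T,\vartheta})^2-\langle u\cdot Y^{T,\vartheta}\rangle_t$ is a martingale; subtracting the two representations, $\langle u\cdot Y^{T,\vartheta}\rangle_t-\int_0^t\|\mathsf J_{T-r}^\vartheta(X_r)u\|_2^2\,dr$ is a continuous martingale of finite variation vanishing at $0$, hence identically zero, which is the assertion (the compensator is finite, being bounded by $C_L^2|u|^2t$). The step I expect to demand the most care is the justification of the differentiation under the integral and the integrations by parts in the formula for $\rho'(r)$: $F(r,y)$ grows quadratically in $y$, so the vanishing of the boundary terms at infinity is a genuine requirement, and $F$ is merely $C^2$ away from the origin—exactly the situation Convention~\ref{ConventionAnalytic} is designed to handle. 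The remaining ingredients—the chain-rule identity and the martingale comparison—are purely algebraic.
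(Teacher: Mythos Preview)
Your proof is correct and, for the martingale and square-integrability parts, essentially identical to the paper's: both use the linear bound $|\mathsf{H}_T^\vartheta(x)|=\lambda_T^{\vartheta,\perp}(x)\,|x|\le C_L|x|$ together with Lemma~\ref{LemmaKolmogorovD} at $m=1$ for integrability, and the space-time harmonicity~\eqref{SpaceTimeharmonicD} plus the Markov property for the martingale identity.

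For the bracket the two arguments diverge in packaging, though the analytic core---the chain rule~\eqref{ChainRuleD} applied to a quadratic $G$---is the same. The paper asserts that $\langle u\cdot Y^{T,\vartheta}\rangle$ is absolutely continuous and then identifies its density pointwise via
\[
\frac{d}{dt}\langle M\rangle_t
\,=\,
\lim_{h\downarrow0}\frac{1}{h}\,
\mathbb{E}_\mu^{T,\vartheta}\!\big[(M_{t+h}-M_t)^2\mid\mathcal F_t^{T,\vartheta,\mu}\big],
\]
computing the right-hand limit by the backward identity~\eqref{KolmogorovForDBack} and~\eqref{ChainRuleD}. You instead run a Dynkin computation on $F(t,x)=(u\cdot\mathsf{H}_{T-t}^\vartheta(x))^2$ to show directly that $(u\cdot Y_t^{T,\vartheta})^2-\int_0^t\|\mathsf J_{T-s}^\vartheta(X_s)u\|_2^2\,ds$ is a martingale, and then invoke uniqueness of the continuous compensator. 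Your route has the advantage of not presupposing absolute continuity of the bracket (which the paper states without proof), and the integrability check for the Dynkin martingale is clean thanks to the uniform eigenvalue bound~\eqref{EigenvalueBoundD}. The paper's route is marginally shorter once one accepts the density formula for $\langle M\rangle$. Your caveat about the integrations by parts for the quadratically growing $F$ is well placed and is exactly what Convention~\ref{ConventionAnalytic} is there to absorb; the paper's proof of Lemma~\ref{LemmaKolmogorovD} relies on the same convention for the analogous step.
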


\begin{proof}
Fix $t\in[0,T]$. By Minkowski's inequality we obtain the first inequality below.
\begin{align*}
\mathbb{E}_\mu^{T,\vartheta}\big[|Y_t^{T,\vartheta}|^2\big]^{1/2}
& \, \le \, 
\mathbb{E}_\mu^{T,\vartheta}\big[|Y_0^{T,\vartheta}|^2\big]^{1/2}
 \, + \, 
\mathbb{E}_\mu^{T,\vartheta}\big[|Y_t^{T,\vartheta}-Y_0^{T,\vartheta}|^2\big]^{1/2}\\
& \, \le \, 
C_L\Big(\int_{\R^2}|x|^2\,\mu(dx)\Big)^{1/2}
 \, + \, 
\Big(\int_{\R^2}\!\int_{\R^2}\mathlarger{\mathsf{d}}_{0,t}^{T,\vartheta}(x,y)\,
\big|\mathsf{H}_{T-t}^\vartheta(y)-\mathsf{H}_T^\vartheta(x)\big|^2\,dy\,\mu(dx)\Big)^{1/2}
 \, < \, \infty  \, .
\end{align*}
The second inequality uses that
$|\mathsf{H}_T^\vartheta(x)|=\lambda_T^{\vartheta,\perp}(x)\,|x|\le C_L|x|$ by Lemma~\ref{EigenvalueBoundD}.
Finiteness of the right--hand side follows from the integrability assumption on
$\mu$ and Lemma~\ref{LemmaKolmogorovD}. Hence $\{Y_t^{T,\vartheta}\}_{t\in[0,T]}$ is square-integrable.

Next, for $0\le s<t\le T$, by the Markov property of $X$ under
$\mathbb{P}_\mu^{T,\vartheta}$, we have the second equality below.
\begin{align*}
\mathbb{E}_\mu^{T,\vartheta}\!\big[Y_t^{T,\vartheta}\mid \mathcal{F}_s^{T,\vartheta,\mu}\big]
\,=\,
\mathbb{E}_\mu^{T,\vartheta}\!\big[\mathsf{H}_{T-t}^\vartheta(X_t)\mid \mathcal{F}_s^{T,\vartheta,\mu}\big]
\,=\,
\int_{\R^2}\mathsf{H}_{T-t}^\vartheta(y)\,
\mathlarger{\mathsf{d}}_{s,t}^{T,\vartheta}(X_s,y)\,dy \,
\end{align*}
By the space-time harmonic relation~\eqref{SpaceTimeharmonicD} the right--hand side equals
$\mathsf{H}_{T-s}^\vartheta(X_s)=Y_s^{T,\vartheta}$.
Thus $\{Y_t^{T,\vartheta}\}_{t\in[0,T]}$ is a
$\mathbb{P}_\mu^{T,\vartheta}$-martingale. Continuity of $Y^{T,\vartheta}$ follows from continuity of $X$ and the joint
continuity, guaranteed by Hypothesis~\ref{H2}, of the extension of
$(r,x)\mapsto\mathsf{H}_r^\vartheta(x)$ to $(0,T]\times\R^2$.

For the quadratic variation of $Y^{T,\vartheta}$, fix $u\in\R^2$ and set $M_t:=u\cdot Y_t^{T,\vartheta}$.
Then $M$ is a continuous square-integrable martingale, hence its bracket
$\langle M\rangle$ is absolutely continuous and, for a.e.\ $t\in(0,T)$,
\begin{align}\label{PiffleD}
\frac{d}{dt}\langle M\rangle_t
 \, = \, 
\lim_{h\downarrow0}\frac1h\,
\mathbb{E}_\mu^{T,\vartheta}\!\left[\big(M_{t+h}-M_t\big)^2
\mid \mathcal{F}_t^{T,\vartheta,\mu}\right]
\qquad \mathbb{P}_\mu^{T,\vartheta}\text{-a.s.}
\end{align}
Moreover, by the Markov property and time-inhomogeneity, for $h>0$,
\begin{align*}
\mathbb{E}_\mu^{T,\vartheta}\!\left[\big(M_{t+h}-M_t\big)^2
\mid \mathcal{F}_t^{T,\vartheta,\mu}\right]
&=
\int_{\R^2}\mathlarger{\mathsf{d}}_{0,h}^{\,T-t,\vartheta}(X_t,y)\,
\Big(u\cdot\mathsf{H}_{T-t-h}^\vartheta(y)
-
u\cdot\mathsf{H}_{T-t}^\vartheta(X_t)\Big)^2\,dy .
\end{align*}
Applying the backward Kolmogorov identity~\eqref{KolmogorovForDBack}
together with the chain rule identity~\eqref{ChainRuleD}, we obtain that,
for a.e.\ $t\in(0,T)$ and $\mathbb{P}_\mu^{T,\vartheta}$--a.s.\,,
\begin{align*}
\frac{d}{dt}\langle M\rangle_t
\,=\,
\big(\lambda_{T-t}^\vartheta(X_t)\big)^2
\|P_{X_t}u\|_2^2
\,+\,
\big(\lambda_{T-t}^{\vartheta,\perp}(X_t)\big)^2
\|(I-P_{X_t})u\|_2^2
\,=\,
\big\|\mathsf{J}_{T-t}^\vartheta(X_t)\,u\big\|_2^2 \,, 
\end{align*}
which yields the claimed quadratic variation identity.

\end{proof}

Note that the $2\times 2$ matrix $\mathsf{J}_t^\vartheta(x)$ admits the spectral representation~(\ref{JacobianRepresentation})
and is therefore invertible whenever $x\neq 0$ and
$\lambda_t^\vartheta(x)\neq 0$, $\lambda_t^{\vartheta,\perp}(x)\neq 0$, with
\begin{align}\label{JacobianRepresentationInverse}
    \big(\mathsf{J}_t^\vartheta(x)\big)^{-1}
\,=\,
\frac{1}{\lambda_t^\vartheta(x)}\,P_x
\,+\,
\frac{1}{\lambda_t^{\vartheta,\perp}(x)}\,(I-P_x)\,.
\end{align}
In the following corollary, this inverse is only used on the set $\{X_t\neq 0\}$,
on which the above representation applies.

\begin{corollary}\label{CorollaryMartID}
Fix $T,\vartheta>0$ and a Borel probability measure $\mu$ on $\R^2$.
Assume Hypothesis~\ref{H2}, and suppose that $\int_{\R^2}|x|^2\,\mu(dx)<\infty$.
Let the $\R^2$-valued process $\{Y_t^{T,\vartheta}\}_{t\in[0,T]}$ be defined as in
Lemma~\ref{LemmaMartID}.  Define the process
$\{W_t^{T,\vartheta}\}_{t\in[0,T]}$ by
\[
W_t^{T,\vartheta}
\,:=\,
\int_0^t \mathbf{1}_{\{X_s\neq 0\}}\,
\big(\mathsf{J}_{T-s}^{\vartheta}(X_s)\big)^{-1}\,dY_s^{T,\vartheta},
\qquad t\in[0,T]\,.
\]
Then $\{W_t^{T,\vartheta}\}_{t\in[0,T]}$ is a two-dimensional standard Brownian
motion with respect to the filtration
$\{\mathcal{F}_t^{T,\vartheta,\mu}\}_{t\in[0,T]}$ under $\mathbb{P}_\mu^{T,\vartheta}$.
Moreover, the processes $Y^{T,\vartheta}$ and $W^{T,\vartheta}$ satisfy, $\mathbb{P}_\mu^{T,\vartheta}$--a.s.\,,
\begin{align*}
dY_t^{T,\vartheta}
\,=\,
\mathsf{J}_{T-t}^{\vartheta}(X_t)\,dW_t^{T,\vartheta}\,,
\qquad t\in[0,T]\,.
\end{align*}
\end{corollary}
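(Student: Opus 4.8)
The plan is to realize $W^{T,\vartheta}$ as a matrix-valued stochastic integral driven by the continuous martingale $Y^{T,\vartheta}$ of Lemma~\ref{LemmaMartID}, to compute its bracket, and then to invoke L\'evy's characterization of Brownian motion. I would begin by upgrading the scalar bracket formula of Lemma~\ref{LemmaMartID} to a matrix identity: polarizing $\langle u\cdot Y^{T,\vartheta}\rangle_t=\int_0^t\|\mathsf{J}_{T-s}^\vartheta(X_s)u\|_2^2\,ds$ in $u\in\R^2$ yields
\[
d\langle Y^{T,\vartheta}\rangle_s
\, = \,
\mathsf{J}_{T-s}^\vartheta(X_s)^\dagger\,\mathsf{J}_{T-s}^\vartheta(X_s)\,ds
\, = \,
\big(\mathsf{J}_{T-s}^\vartheta(X_s)\big)^2\,ds \, ,
\]
the last equality using that $\mathsf{J}_{T-s}^\vartheta(X_s)$ is symmetric by the orthogonal decomposition~\eqref{JacobianRepresentation}. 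On $\{X_s\neq 0\}$ the Jacobian is invertible with inverse~\eqref{JacobianRepresentationInverse}, and $Z_s:=\mathbf{1}_{\{X_s\neq 0\}}\big(\mathsf{J}_{T-s}^\vartheta(X_s)\big)^{-1}$ is progressively measurable, since $X$ is continuous and $(r,x)\mapsto\mathsf{H}_r^\vartheta(x)$, hence the eigenvalues $\lambda_r^\vartheta(x),\lambda_r^{\vartheta,\perp}(x)$, is continuous on $(0,T]\times(\R^2\setminus\{0\})$ by Hypothesis~\ref{H2}.

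The second step is to verify that $W^{T,\vartheta}_t=\int_0^t Z_s\,dY_s^{T,\vartheta}$ is a well-defined continuous $L^2$-martingale even though $\|Z_s\|$ may blow up as $X_s\to 0$: the integrability requirement is on $\mathrm{tr}\big(Z_s\,d\langle Y^{T,\vartheta}\rangle_s\,Z_s^\dagger\big)$, and there the inverse Jacobians are exactly compensated by the degeneracy of $d\langle Y^{T,\vartheta}\rangle_s$, leaving $\mathrm{tr}\big(Z_s(\mathsf{J}_{T-s}^\vartheta(X_s))^2 Z_s^\dagger\big)=2\,\mathbf{1}_{\{X_s\neq 0\}}\le 2$. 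Hence $W^{T,\vartheta}$ is a continuous square-integrable $\{\mathcal{F}_t^{T,\vartheta,\mu}\}$-martingale with $W^{T,\vartheta}_0=0$ and matrix bracket
\[
\langle W^{T,\vartheta}\rangle_t
\, = \,
\int_0^t Z_s\,d\langle Y^{T,\vartheta}\rangle_s\,Z_s^\dagger
\, = \,
\Big(\int_0^t\mathbf{1}_{\{X_s\neq 0\}}\,ds\Big)\,I_2 \, .
\]

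Next I would show that $X$ spends no time at the origin, i.e.\ $\int_0^T\mathbf{1}_{\{X_s=0\}}\,ds=0$ $\mathbb{P}_\mu^{T,\vartheta}$-a.s. By Lemma~\ref{LemTranKern}, for each $s\in(0,T]$ the law of $X_s$ under $\mathbb{P}_\mu^{T,\vartheta}$ is absolutely continuous (with density $y\mapsto\int_{\R^2}\mathlarger{\mathsf{d}}_{0,s}^{T,\vartheta}(x,y)\,\mu(dx)$), so $\mathbb{P}_\mu^{T,\vartheta}[X_s=0]=0$, and Tonelli gives $\mathbb{E}_\mu^{T,\vartheta}\big[\int_0^T\mathbf{1}_{\{X_s=0\}}\,ds\big]=\int_0^T\mathbb{P}_\mu^{T,\vartheta}[X_s=0]\,ds=0$. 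Therefore $\langle W^{T,\vartheta,i},W^{T,\vartheta,j}\rangle_t=\delta_{ij}\,t$, and L\'evy's characterization theorem (see, e.g.,~\cite{Karatzas}) identifies $W^{T,\vartheta}$ as a standard two-dimensional Brownian motion with respect to $\{\mathcal{F}_t^{T,\vartheta,\mu}\}$. Finally, associativity of the stochastic integral gives $\int_0^t\mathsf{J}_{T-s}^\vartheta(X_s)\,dW_s^{T,\vartheta}=\int_0^t\mathsf{J}_{T-s}^\vartheta(X_s)Z_s\,dY_s^{T,\vartheta}=\int_0^t\mathbf{1}_{\{X_s\neq 0\}}\,dY_s^{T,\vartheta}$, and the omitted contribution $\int_0^t\mathbf{1}_{\{X_s=0\}}\,dY_s^{T,\vartheta}$ is a continuous martingale whose bracket is dominated by $C_L^2\int_0^t\mathbf{1}_{\{X_s=0\}}\,ds=0$ by the eigenvalue bound~\eqref{EigenvalueBoundD}, hence vanishes; thus $\int_0^t\mathsf{J}_{T-s}^\vartheta(X_s)\,dW_s^{T,\vartheta}=Y_t^{T,\vartheta}-Y_0^{T,\vartheta}$, which is the asserted identity $dY_t^{T,\vartheta}=\mathsf{J}_{T-t}^\vartheta(X_t)\,dW_t^{T,\vartheta}$.

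The one genuinely delicate point I anticipate is the second step: arguing that the unbounded integrand $(\mathsf{J}_{T-s}^\vartheta(X_s))^{-1}$ is $Y^{T,\vartheta}$-integrable because its singularity at the origin is exactly matched by the vanishing of $d\langle Y^{T,\vartheta}\rangle$ there. The remaining ingredients (polarization of the scalar bracket, Tonelli, L\'evy's theorem, and associativity of the stochastic integral) are routine.
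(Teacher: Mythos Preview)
Your proposal is correct and follows essentially the same route as the paper: polarize Lemma~\ref{LemmaMartID} to obtain the matrix bracket $d\langle Y^{T,\vartheta}\rangle_s=\mathsf{J}_{T-s}^\vartheta(X_s)\big(\mathsf{J}_{T-s}^\vartheta(X_s)\big)^\dagger\,ds$, compute $\langle W^{T,\vartheta}\rangle_t$ and reduce it to $tI_2$ via the Tonelli argument that $\int_0^T\mathbf{1}_{\{X_s=0\}}\,ds=0$ a.s., apply L\'evy's characterization, and then drop the indicator to recover $dY^{T,\vartheta}=\mathsf{J}\,dW^{T,\vartheta}$. Your treatment of the two points the paper leaves implicit---the $L^2$-integrability of the unbounded integrand $Z_s$ against $dY^{T,\vartheta}$ via the exact cancellation $\mathrm{tr}\big(Z_s\,d\langle Y^{T,\vartheta}\rangle_s\,Z_s^\dagger\big)=2\,\mathbf{1}_{\{X_s\neq0\}}\,ds$, and the vanishing of $\int_0^t\mathbf{1}_{\{X_s=0\}}\,dY_s^{T,\vartheta}$ via the eigenvalue bound~\eqref{EigenvalueBoundD}---is in fact more explicit than the paper's, which simply asserts that $W^{T,\vartheta}$ is a well-defined local martingale and that the indicator may be dropped.
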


\begin{proof}
By Lemma~\ref{LemmaMartID}, $Y^{T,\vartheta}$ is a continuous square-integrable
$\mathbb{P}_\mu^{T,\vartheta}$-martingale. On $\{X_s\neq 0\}$ the matrix
$\mathsf{J}_{T-s}^\vartheta(X_s)$ is invertible, hence $W^{T,\vartheta}$ is a well-defined
continuous local martingale with respect to $\{\mathcal{F}_t^{T,\vartheta,\mu}\}_{t\in[0,T]}$
under $\mathbb{P}_\mu^{T,\vartheta}$. By L\'evy's characterization, it suffices to show that for each $v\in\R^2$, $\big\langle v\cdot W^{T,\vartheta}\big\rangle_t=t\|v\|_2^2$ for $t\in[0,T]$.
Set $M_t:=v\cdot W_t^{T,\vartheta}$. Since $M$ is the stochastic integral of the
predictable process
$s\mapsto \mathbf{1}_{\{X_s\neq 0\}}(\mathsf{J}_{T-s}^{\vartheta}(X_s))^{-1}v$
against the $\R^2$-valued martingale $Y^{T,\vartheta}$, we have
\[
\langle M\rangle_t
 \, = \, 
\int_0^t  \, \mathbf{1}_{\{X_s\neq 0\}}\,
\Big\langle (\mathsf{J}_{T-s}^{\vartheta}(X_s))^{-1} \, v \,,\,
d\langle Y^{T,\vartheta}\rangle_s\, (\mathsf{J}_{T-s}^{\vartheta}(X_s))^{-1} \, v
\Big\rangle \, .
\]

Moreover, $d\langle u\cdot Y^{T,\vartheta}\rangle_s=\|\mathsf{J}_{T-s}^{\vartheta}(X_s)u\|_2^2\,ds$ for every $u\in\R^2$ by Lemma~\ref{LemmaMartID},
hence the matrix-valued bracket satisfies $d\langle Y^{T,\vartheta}\rangle_s
=
\mathsf{J}_{T-s}^{\vartheta}(X_s)\,\big(\mathsf{J}_{T-s}^{\vartheta}(X_s)\big)^\dagger\,ds$. Therefore,
\begin{align*}
\langle M\rangle_t
&=
\int_0^t \mathbf{1}_{\{X_s\neq 0\}}\,
\Big\langle (\mathsf{J}_{T-s}^{\vartheta}(X_s))^{-1} \, v \,,\,
\mathsf{J}_{T-s}^{\vartheta}(X_s)\big(\mathsf{J}_{T-s}^{\vartheta}(X_s)\big)^\dagger
(\mathsf{J}_{T-s}^{\vartheta}(X_s))^{-1} \, v
\Big\rangle\,ds\\
& \, = \, 
\int_0^t  \, \mathbf{1}_{\{X_s\neq 0\}}\,
\big\langle v \,,\, v\big\rangle\,ds \nonumber \\
 &\, = \, 
\|v\|_2^2 \,  \int_0^t \,  \mathbf{1}_{\{X_s\neq 0\}}\,ds \, .
\end{align*}

It remains to show that $\int_0^t \mathbf{1}_{\{X_s=0\}}\,ds=0$
$\mathbb{P}_\mu^{T,\vartheta}$--a.s. For each $s\in(0,T]$ and $x\in\R^2$, under
$\mathbb{P}_x^{T,\vartheta}$ the random variable $X_s$ has Lebesgue density
$\mathlarger{\mathsf{d}}_{0,s}^{T,\vartheta}(x,\cdot)$, hence $\mathbb{P}_x^{T,\vartheta}[X_s=0]
=
\int_{\R^2}\mathlarger{\mathsf{d}}_{0,s}^{T,\vartheta}(x,y)\,\mathbf{1}_{\{y=0\}}\,dy
=0$.
By Tonelli's theorem,
\[
\mathbb{E}_\mu^{T,\vartheta}\!\left[\int_0^t \mathbf{1}_{\{X_s=0\}}\,ds\right]
 \, = \, 
\int_{\R^2} \, \int_0^t  \, \mathbb{P}_x^{T,\vartheta}[X_s=0]\,ds\,\mu(dx)
 \, = \, 0 \, ,
\]
so $\int_0^t \mathbf{1}_{\{X_s=0\}}\,ds=0$ $\mathbb{P}_\mu^{T,\vartheta}$--a.s.
Consequently, $\langle v\cdot W^{T,\vartheta}\rangle_t=t\|v\|_2^2$ for all
$t\in[0,T]$, and thus $W^{T,\vartheta}$ is a two-dimensional standard Brownian motion.

Finally, by definition,
$dW_t^{T,\vartheta}
=\mathbf{1}_{\{X_t\neq 0\}}(\mathsf{J}_{T-t}^{\vartheta}(X_t))^{-1}\,dY_t^{T,\vartheta}$,
so $\mathbf{1}_{\{X_t\neq 0\}}\,dY_t^{T,\vartheta}
=\mathsf{J}_{T-t}^{\vartheta}(X_t)\,dW_t^{T,\vartheta}$.
Since $\int_0^T\mathbf{1}_{\{X_t=0\}}\,dt=0$ $\mathbb{P}_\mu^{T,\vartheta}$--a.s.,
the indicator may be dropped inside stochastic integrals, yielding
$dY_t^{T,\vartheta}=\mathsf{J}_{T-t}^{\vartheta}(X_t)\,dW_t^{T,\vartheta}$
for $t\in[0,T]$, $\mathbb{P}_\mu^{T,\vartheta}$--a.s. \vspace{.3cm}
\end{proof}

\begin{notation}\label{DerivativeConvention}
For a twice continuously differentiable map
$f:\R^2\to\R^2$, we write $\nabla^{\dagger} f(x)$ for the $2\times2$ Jacobian
matrix of first derivatives at $x\in\R^2$. The second derivative $Df(x)$ is
viewed as an element of the tensor product $\R^2\otimes M^{2\times2}$, with the
convention that for $v\in\R^2$, $(v^{\dagger}\otimes I_2)\,Df(x)$ is the Hessian matrix of the real-valued function $v^{\dagger}f=v\cdot f$.
Under these conventions, $\Tr_2(Df)(x)=\Delta f(x)$ componentwise.

If $g$ denotes the local inverse of $f$ and $\nabla^{\dagger}f(x)$ is invertible,
then a direct application of the second-order chain rule yields
\begin{align}\label{PipD}
\big(D g\big)\big(f(x)\big)
\,=\,
-\Big(\nabla^{\dagger}f(x)\otimes\nabla f(x)\Big)^{-1}
\big(D f\big)(x)
\Big(I_2\otimes\nabla^{\dagger}f(x)\Big)^{-1} \, ,
\end{align}
where $I_2$ denotes the $2\times2$ identity matrix and $\nabla f(x)$ is the
transpose of the Jacobian $\nabla^{\dagger}f(x)$.
\end{notation}

\begin{proof}[Proof of Proposition~\ref{PropStochPreD}]

Let $W^{T,\vartheta}$ be the two-dimensional Brownian motion constructed in
Corollary~\ref{CorollaryMartID}. For each $t\in(0,T]$, let
$\mathsf{G}_t^\vartheta:\R^2\setminus\{0\}\longrightarrow \R^2\setminus\{0\}$
denote the inverse of $\mathsf{H}_t^\vartheta$ from Hypothesis~\ref{H2}. Then, for
every $t\in[0,T]$,
\[
X_t \, = \, \mathsf{G}_{T-t}^\vartheta\!\big(Y_t^{T,\vartheta}\big)
\qquad\text{on \,  }\{X_t\neq0\} \, = \, \{Y_t^{T,\vartheta}\neq0\} \, .
\]

Fix $t\in(0,T]$ and $x\neq0$. Differentiating the identity
$\mathsf{G}_t^\vartheta(\mathsf{H}_t^\vartheta(x))=x$ with respect to $x$ and using
$\nabla^{\dagger}\mathsf{H}_t^\vartheta(x)=\mathsf{J}_t^\vartheta(x)$ yields
\begin{align}\label{PDEUp1D_corr}
\big(\nabla^\dagger \mathsf{G}_t^\vartheta\big)\big(\mathsf{H}_t^\vartheta(x)\big)\,
\mathsf J_t^\vartheta(x)
\,=\, I_2\, .
\end{align}
Differentiating the same identity with respect to $t$ gives
\begin{align}
\big(\partial_t\mathsf{G}_t^\vartheta\big)\big(\mathsf{H}_t^\vartheta(x)\big)
\,=\, -\big(\nabla^{\dagger}\mathsf{G}_t^\vartheta\big)\big(\mathsf{H}_t^\vartheta(x)\big)\,
\big(\partial_t\mathsf{H}_t^\vartheta\big)(x)
\,=\,-(\mathsf{J}_t^\vartheta(x))^{-1}\,(\partial_t\mathsf{H}_t^\vartheta)(x)\,, \nonumber
\end{align}
where the second equality uses~\eqref{PDEUp1D_corr} and the fact that
$\mathsf{J}_t^\vartheta(x)$ is invertible for $x\neq0$ on the set where it is
used; see~\eqref{JacobianRepresentationInverse}. Using~\eqref{UspsilonPDE}, we
therefore obtain
\begin{align}
\big(\partial_t\mathsf{G}_t^\vartheta\big)\big(\mathsf{H}_t^\vartheta(x)\big)
\,=\,
-\frac12\,\big(\mathsf J_t^\vartheta(x)\big)^{-1}(\Delta\mathsf{H}_t^\vartheta)(x)
 \, - \, 
 b_t^\vartheta(x) \nonumber
\end{align}

Next, recall the derivative convention introduced in
Notation~\ref{DerivativeConvention}: for $f:\R^2\to\R^2$ of class $C^2$, the
second derivative $Df(x)$ is viewed as an element of $\R^2\otimes M^{2\times2}$,
and $\Tr_2$ denotes the trace over the matrix component. Under this convention,
we have the componentwise identity $\Tr_2(Df)=\Delta f$. Applying~\eqref{PipD}
with $f=\mathsf{H}_t^\vartheta$ and $g=\mathsf{G}_t^\vartheta$, and using
$(B\otimes C)^{-1}=B^{-1}\otimes C^{-1}$ for invertible matrices together with
cyclicity of the trace, yields
\begin{align}
\Tr_2\!\Big[\Big(I\otimes (\mathsf{J}_t^\vartheta(x))^{\dagger}\Big)
\big(D\mathsf{G}_t^\vartheta\big)\big(\mathsf{H}_t^\vartheta(x)\big)
\Big(I\otimes \mathsf{J}_t^\vartheta(x)\Big)\Big]
\,=\,
-(\mathsf{J}_t^\vartheta(x))^{-1}\,\Tr_2\!\big[(D\mathsf{H}_t^\vartheta)(x)\big] \,=\,
-(\mathsf{J}_t^\vartheta(x))^{-1}\,(\Delta \mathsf{H}_t^\vartheta)(x)\, . \nonumber
\end{align}
Combining the last two displays yields the following identity.
\begin{align}
\big(\partial_t\, \mathsf{G}_t^\vartheta\big)\big(\mathsf{H}_t^\vartheta(x)\big)
&\,=\,
-b_t^\vartheta(x)
\,+\,
\frac12\,\Tr_2\!\Big[\Big(I\otimes (\mathsf{J}_t^\vartheta(x))^{\dagger}\Big)
\big(D\mathsf{G}_t^\vartheta\big)\big(\mathsf{H}_t^\vartheta(x)\big)
\Big(I\otimes \mathsf{J}_t^\vartheta(x)\Big)\Big]
\label{PDEUp2D_corr}
\end{align}

Fix $\varepsilon>0$ and let
$\{\rho_{n}^{\varepsilon\downarrow 0}\}_{n\in\mathbb{N}}$ and
$\{\rho_{n}^{0\uparrow\varepsilon}\}_{n\in\mathbb{N}_0}$ be the stopping times
defined in~\eqref{VARRHOSD}. 
For each $n\in\mathbb{N}$, on the interval
$t\in\big(\rho_{n-1}^{0\uparrow\varepsilon},\,\rho_{n}^{\varepsilon\downarrow 0}\big)$
we have $X_t\neq 0$ by definition of $\rho_{n}^{\varepsilon\downarrow 0}$.
Equivalently, $Y_t^{T,\vartheta}\neq 0$ on this interval, since
$\{X_t\neq0\}=\{Y_t^{T,\vartheta}\neq0\}$.
Hence the map $(t,y)\longmapsto \mathsf{G}_{T-t}^\vartheta(y)$ is of class $C^{1,2}$
on $(0,T)\times(\R^2\setminus\{0\})$.
It\^o's formula therefore applies componentwise to
$t\mapsto \mathsf{G}_{T-t}^\vartheta(Y_t^{T,\vartheta})$ on this interval;
the required integrability is ensured by Hypothesis~\ref{H3}(i).

Applying It\^o's formula yields, for
$t\in(\rho_{n-1}^{0\uparrow\varepsilon},\rho_{n}^{\varepsilon\downarrow 0})$,
\begin{align}\label{Ito_for_X}
dX_t
\,=\,&
d\big(\mathsf{G}_{T-t}^\vartheta(Y_t^{T,\vartheta})\big) \nonumber\\
\,=\,&
-\big(\partial_s \mathsf{G}_s^\vartheta\big)\big(Y_t^{T,\vartheta}\big)\big|_{s=T-t}\,dt
\,+\,
\big(\nabla_y^\dagger \mathsf{G}_{T-t}^\vartheta\big)(Y_t^{T,\vartheta})\,dY_t^{T,\vartheta}
\nonumber\\
\,+\,&
\frac12\,\Tr_2\!\Big[\big(I\otimes (dY_t^{T,\vartheta})^\dagger\big)\,
\big(D_y\mathsf{G}_{T-t}^\vartheta\big)(Y_t^{T,\vartheta})\,
\big(I\otimes dY_t^{T,\vartheta}\big)\Big] \, .
\end{align}
Using $dY_t^{T,\vartheta}=\mathsf{J}_{T-t}^\vartheta(X_t)\,dW_t^{T,\vartheta}$
(by Corollary~\ref{CorollaryMartID}) and writing everything at
$Y_t^{T,\vartheta}=\mathsf{H}_{T-t}^\vartheta(X_t)$, we obtain for the second term above
\begin{align*}
\big(\nabla_y^\dagger \mathsf{G}_{T-t}^\vartheta\big)\big(\mathsf{H}_{T-t}^\vartheta(X_t)\big)\,
dY_t^{T,\vartheta}
\,=\,
\big(\nabla^\dagger \mathsf{G}_{T-t}^\vartheta\big)\big(\mathsf{H}_{T-t}^\vartheta(X_t)\big)\,
\mathsf{J}_{T-t}^\vartheta(X_t)\,dW_t^{T,\vartheta} 
\,=\,
dW_t^{T,\vartheta} \, ,
\end{align*}
where the last equality follows from~\eqref{PDEUp1D_corr} with $t$ replaced by $T-t$
and $x=X_t$.
For the drift term, note that
$\partial_t\big(\mathsf{G}_{T-t}^\vartheta(y)\big)
=
-\big(\partial_s \mathsf{G}_s^\vartheta\big)(y)\big|_{s=T-t}$, and therefore
\begin{align*}
-\big(\partial_s \mathsf{G}_s^\vartheta\big)\big(\mathsf{H}_s^\vartheta(X_t)\big) \, \Big|_{s=T-t}
& \, = \, 
b_{T-t}^\vartheta(X_t)
-\frac12\,\Tr_2\!\Big[\Big(I\otimes (\mathsf{J}_{T-t}^\vartheta(X_t))^{\dagger}\Big)
\big(D\mathsf{G}_{T-t}^\vartheta\big)\big(\mathsf{H}_{T-t}^\vartheta(X_t)\big)
\Big(I\otimes \mathsf{J}_{T-t}^\vartheta(X_t)\Big)\Big] \, ,
\end{align*}
where we used~\eqref{PDEUp2D_corr} with $t$ replaced by $T-t$.
The quadratic-variation term above cancels with that in~\eqref{Ito_for_X}, and therefore,
for
$t\in\big(\rho_{n-1}^{0\uparrow\varepsilon},\,\rho_{n}^{\varepsilon\downarrow 0}\big)$,
we conclude that
$dX_t=dW_t^{T,\vartheta}+ b_{T-t}^\vartheta(X_t)\,dt$.

Thus $X$ satisfies~\eqref{SDEToSolve} on each excursion interval away from $0$; since
these intervals cover $\{t\in[0,T]:X_t\neq0\}$, the SDE holds on $\{X_t\neq0\}$.
Therefore the process
\[
R_t \, := \, X_t \, - \, X_0 \, - \, W_t^{T,\vartheta} \, - \, \int_0^t  \, b_{T-s}^\vartheta(X_s)\,ds,
\qquad t\in[0,T] \, ,
\]
has zero differential on $\{X_t\neq0\}$, and hence is constant on each excursion of
$X$ away from $0$. Let $\Lambda_s^\varepsilon$ be the indicator of the union of
the ``small'' intervals
$\big(\rho_{n}^{\varepsilon\downarrow 0},\rho_{n}^{0\uparrow\varepsilon}\big)$,
so that $\Lambda_s^\varepsilon\le \mathbf{1}_{\{|X_s|\le\varepsilon\}}$. Then
$R_t=\int_0^t \Lambda_s^\varepsilon\,dR_s$ for $t\in[0,T]$ and therefore
\[
\sup_{0\le t\le T}|R_t|
 \, \le \, 
\sup_{0\le t\le T}\Big|\int_0^t  \, \Lambda_s^\varepsilon\,dX_s\Big|
 \, + \, 
 \sup_{0\le t\le T}\Big|\int_0^t \,  \Lambda_s^\varepsilon\,dW_s^{T,\vartheta}\Big|
 \, + \, 
 \int_0^T \,  \Lambda_s^\varepsilon\,|b_{T-s}^\vartheta(X_s)|\,ds \, .
\]
By continuity of $X$, each interval
$\big(\rho_{n}^{\varepsilon\downarrow 0},\rho_n^{0\uparrow\varepsilon}\big)$
contributes an increment of $X$ of magnitude at most $2\varepsilon$, hence
\[
\sup_{0\le t\le T}\Big|\int_0^t \,  \Lambda_s^\varepsilon\,dX_s\Big|
 \, \le \,  2\varepsilon\,N_T^\varepsilon \, .
\]
Taking expectations and using~\eqref{H3Upcross} yields
\[
\mathbb{E}_\mu^{T,\vartheta}\!\Big[\sup_{0\le t\le T}\Big|\int_0^t  \, \Lambda_s^\varepsilon\,dX_s\Big|\Big]
 \, \le \, 
2\varepsilon\,\mathbb{E}_\mu^{T,\vartheta}\!\big[N_T^\varepsilon\big]
 \, \le \, 
2C\,\varepsilon\,\psi(\varepsilon)
\xrightarrow{\varepsilon\downarrow0}0 \, ,
\]
where the final convergence follows from~\eqref{H3PsiCond}. For the second term, Doob's maximal inequality together with It\^o's isometry,
and the definition of $\Lambda_s^\varepsilon$, yield the first two inequalities below,
\[
\mathbb{E}_\mu^{T,\vartheta}\Big[\sup_{0\le t\le T}\Big|\int_0^t \,  \Lambda_s^\varepsilon\,dW_s^{T,\vartheta}\Big|^2\Big]
 \, \le \, 
 4\,\mathbb{E}_\mu^{T,\vartheta}\Big[\int_0^T \,  \Lambda_s^\varepsilon\,ds\Big]
 \, \le \, 
 4\,\mathbb{E}_\mu^{T,\vartheta}\Big[\int_0^T  \, \mathbf{1}_{\{|X_s|\le\varepsilon\}}\,ds\Big]
 \, \le \,  4C\,\eta(\varepsilon) \, ,
\]
where the last inequality follows from~\eqref{H3OccAndDriftSmallBall}. The right-hand
side converges to $0$ as $\varepsilon\downarrow0$ by~\eqref{H3PsiCond}. Similarly,
using~\eqref{H3OccAndDriftSmallBall} together with~\eqref{H3PsiCond}, we obtain
\[
\mathbb{E}_\mu^{T,\vartheta}\Big[\int_0^T  \, \Lambda_s^\varepsilon\,|b_{T-s}^\vartheta(X_s)|\,ds\Big]
 \, \le \, 
\mathbb{E}_\mu^{T,\vartheta}\Big[\int_0^T  \, \mathbf{1}_{\{|X_s|\le\varepsilon\}}\,|b_{T-s}^\vartheta(X_s)|\,ds\Big]
 \, \le  \, 
 C\,\eta(\varepsilon) \xrightarrow{\varepsilon\downarrow0}0 \, .
\]
Hence $\sup_{0\le t\le T}|R_t|\to0$ in probability as $\varepsilon\downarrow0$, and,
since $R$ is continuous, this implies that $R_t=0$ for all $t\in[0,T]$ almost surely.
Therefore~\eqref{SDEToSolve} holds on all of $[0,T]$.

\end{proof}

\section{Proofs of the hitting time results}
\label{SectionOriginEventDProof}

In this section we will prove Proposition~\ref{PropSubMartD} and Theorem~\ref{ThmSubMARTD}, respectively.

\subsection{Submartingale characterization of the pint interaction} \label{PropSubMartDProof}

Recall that the $\mathbb{P}_{\mu}^{T,\vartheta}$-submartingale $\{\mathbf{S}^{T,\vartheta}_t\}_{t\in [0,\infty)}$ is given by
$ \mathbf{S}^{T,\vartheta}_t:=\mathsf{p}_{T-t}^{\vartheta}(X_t)$, where the function $\mathsf{p}_{r}^{\vartheta}:\R^2\rightarrow [0,1]  $ is defined in~\eqref{DefPD}.

\begin{proof}[Proof of Proposition~\ref{PropSubMartD}]

We first show that $\mathbf{S}^{T,\vartheta}$ is a submartingale.
Fix $0\le s<t\le T$. By the Markov property of $X$ under
$\mathbb{P}^{T,\vartheta}_\mu$ and the existence of the transition density
$\mathlarger{\mathsf{d}}^{T,\vartheta}_{s,t}$, we have
\begin{align*}
\mathbb{E}^{T,\vartheta}_\mu\!\left[\mathbf{S}_t^{T,\vartheta}\mid\mathcal{F}^{T,\vartheta,\mu}_s\right]
\,=\,
\int_{\R^2} d^{T,\vartheta}_{s,t}(X_s,y)\,\mathsf{p}^\vartheta_{T-t}(y)\,dy 
\,=\,
\frac{1}{h_{T-s}^\vartheta(X_s)}
\int_{\R^2} \mathsf{K}^\vartheta_{t-s}(X_s,y)\,
(h_0^\vartheta*g_{T-t})(y)\,dy \,.
\end{align*}
Using the inequality $f_r^\vartheta(x,y)\ge g_r(x-y)$ and the semigroup property
of the Gaussian kernel, we obtain
\[
\mathbb{E}^{T,\vartheta}_\mu\!\left[\mathbf{S}_t^{T,\vartheta}\mid\mathcal{F}^{T,\vartheta,\mu}_s\right]
 \, \ge \, 
\frac{1}{h_{T-s}^\vartheta(X_s)}
\int_{\R^2}  \, h_0^\vartheta(z)\,g_{T-s}(X_s-z)\,dz
 \, = \, 
\mathsf{p}^\vartheta_{T-s}(X_s)
 \, = \, 
\mathbf{S}_s^{T,\vartheta} \, .
\]
Hence $\mathbf{S}^{T,\vartheta}$ is a bounded (by 1) submartingale.

Next, we note that $X$ has continuous sample paths.
By Definition~\ref{SpaceTimeHEFDef}, the logarithmic singularity of
$h_0^\vartheta$ is locally integrable, so the convolution
$x\mapsto(h_0^\vartheta*g_t)(x)$ is finite and continuous on $\R^2$ for every
$t>0$. Since $h_t^\vartheta(x)\to\infty$ as $x\to0$, it follows that
$\mathsf{p}_t^\vartheta(x)\to0$ as $x\to0$, and defining
$\mathsf{p}_t^\vartheta(0):=0$ yields a continuous extension to $\R^2$.
Consequently, $\mathbf{S}^{T,\vartheta}$ is continuous. Thus the Doob-Meyer decomposition (see~\cite[Thm.\ 4.10]{Karatzas}) applies.

Notice that using the It\^o isometry, we get
\begin{align*}
\mathbb{E}_\mu^{T,\vartheta}\!\left[
\left|\int_0^t \, 
\nabla\mathsf{p}_{T-s}^{\vartheta}(X_s)\cdot dW_s^{T,\vartheta}\right|^2
\right]
\,=\,
\mathbb{E}_\mu^{T,\vartheta}\!\left[
\int_0^t\,
\big|\nabla\mathsf{p}_{T-s}^{\vartheta}(X_s)\big|^2\,ds
\right] \, ,
\end{align*}
which is finite for all $t\le T$ by Hypothesis~\ref{H4}(i).
Hence the process $\mathbf{M}^{T,\vartheta}$ given by~\eqref{Martingale}
is a square-integrable martingale.

To identify the support of  $\mathbf{A}^{T,\vartheta}:=\mathbf{S}^{T,\vartheta}-\mathbf{M}^{T,\vartheta}$, let $[u,v]\subset[0,T]$
be such that $X_r\neq0$ for all $r\in[u,v]$.
By continuity of $X$, there exists $\varepsilon>0$ with
$|X_r|\ge\varepsilon$ on $[u,v]$. Hence the function $(t,x)\mapsto \mathsf{p}_{T-t}^\vartheta(x)$ is $C^{1,2}$ on
$[u,v]\times\{x\in\R^2:|x|\ge\varepsilon\}$. Applying It\^o's formula to the process $t\mapsto \mathsf{p}_{T-t}^\vartheta(X_t)$
and using that $dX_t = dW_t^{T,\vartheta} + b_{T-t}^{\vartheta}(X_t)\,dt$ for $t\in[0,T]$, yields
\begin{align}\label{Eq:ItoAway0}
\mathsf{p}_{T-v}^\vartheta(X_v) \, - \, \mathsf{p}_{T-u}^\vartheta(X_u)
\,=\,&
\int_u^v  \,  \nabla\mathsf{p}_{T-s}^\vartheta(X_s)\cdot dW_s^{T,\vartheta}  \nonumber\\
\,+\,&
\int_u^v
\Big(
-\partial_s\mathsf{p}_{T-s}^\vartheta(X_s)
 \, + \, 
 \frac12\Delta\mathsf{p}_{T-s}^\vartheta(X_s)
 \, + \, 
 b_{T-s}^\vartheta(X_s)\cdot \nabla\mathsf{p}_{T-s}^\vartheta(X_s)
\Big)\,ds  \, .
\end{align}
The second (drift) integral vanishes because $\mathsf{p}^\vartheta$ solves the backward
Kolmogorov equation~\eqref{PartialForPD} on $\R^2\setminus\{0\}$ and $X_s\neq0$ on $[u,v]$. Therefore,
\[
\mathbf{S}_v^{T,\vartheta} \, - \, \mathbf{S}_u^{T,\vartheta}
 \, = \, 
\int_u^v  \, \nabla\mathsf{p}_{T-s}^\vartheta(X_s)\cdot dW_s^{T,\vartheta}
 \, = \, 
\mathbf{M}_v^{T,\vartheta} \, - \, \mathbf{M}_u^{T,\vartheta} \, ,
\]
where the first equality uses the definition of $\mathbf{S}^{T,\vartheta}$ and the second
uses~\eqref{Martingale}. Hence $\mathbf{A}_v^{T,\vartheta}-\mathbf{A}_u^{T,\vartheta}=0$,
so $\mathbf{A}^{T,\vartheta}$ is constant on $[u,v]$. Equivalently,
\[
\int_0^{T}  \, \mathbf{1}_{\{X_t\neq 0\}}\,d\mathbf{A}_t^{T,\vartheta} \, = \, 0 \, ,
\qquad \mathbb{P}_\mu^{T,\vartheta}\text{-a.s.}
\]

We now show that the process $\mathbf{A}^{T,\vartheta}$ is increasing. Fix $\varepsilon\in(0,1]$ and consider the up- and down-crossing stopping times
$\{\rho_n^{\varepsilon\downarrow 0}\}_{n\ge1}$ and $\{\rho_n^{0\uparrow\varepsilon}\}_{n\ge0}$
defined in~\eqref{VARRHOSD}. For $t\in[0,T]$ set $N_t^\varepsilon:=\sum_{n\ge1}\mathbf{1}_{\{\rho_n^{\varepsilon\downarrow 0}\le t\}}$,
so that the sums below are finite a.s. Define $\mathbf{S}_t^{T,\vartheta}
=
\mathbf{M}_t^{T,\vartheta,\varepsilon}
+
\mathbf{A}_t^{T,\vartheta,\varepsilon}$,
where
\begin{align*}
\mathbf{M}_t^{T,\vartheta,\varepsilon}
\,=\,
\mathsf{p}_T^\vartheta(X_0)
\,+\,\sum_{n=1}^{N_t^\varepsilon+1}
\Big(
\mathbf{S}_{t\wedge\rho_n^{\varepsilon\downarrow 0}}^{T,\vartheta}
\,-\,
\mathbf{S}_{t\wedge\rho_{n-1}^{0\uparrow\varepsilon}}^{T,\vartheta}
\Big)\,,\quad \textup{ and } \quad
\mathbf{A}_t^{T,\vartheta,\varepsilon}
\,=\,
\sum_{n=1}^{N_t^\varepsilon}
\mathbf{S}_{\rho_n^{0\uparrow\varepsilon}}^{T,\vartheta}\,.
\end{align*}
(Here we used that $\mathbf{S}_{\rho_n^{\varepsilon\downarrow 0}}^{T,\vartheta}
=\mathsf{p}_{T-\rho_n^{\varepsilon\downarrow 0}}^\vartheta(0)=0$.) Fix $\delta\in(0,T)$. On the interval $[0,T-\delta]$, we have
$T-\rho_n^{0\uparrow\varepsilon}\in[\delta,T]$ whenever $\rho_n^{0\uparrow\varepsilon}\le T-\delta$.
Hence
\begin{align}\label{Eq:SmallDecreaseBound}
0 \, \le  \,  \mathbf{S}_{\rho_n^{0\uparrow\varepsilon}}^{T,\vartheta}
 \, = \, 
\mathsf{p}_{T-\rho_n^{0\uparrow\varepsilon}}^\vartheta(X_{\rho_n^{0\uparrow\varepsilon}})
 \, \le \, 
\sup_{r\in[\delta,T]}
\frac{\sup_{x\in\R^2}(h_0^\vartheta*g_r)(x)}{\bar h_r^\vartheta(\varepsilon)}
 \, \le \, 
\frac{C_\delta}{\inf_{r\in[\delta,T]}\bar h_r^\vartheta(\varepsilon)}
 \, \le \, 
\frac{C_\delta}{\bar h_\delta^\vartheta(\varepsilon)}\,,
\end{align}
where $\bar h_r^\vartheta$ denotes the radial profile of $h_r^\vartheta$, and $C_\delta$ exists by
the local integrability in \eqref{hSmallaAsympt}, the tail control \eqref{hInfinityAsympt}, and the joint continuity in Definition~\ref{SpaceTimeHEFDef}(v). Moreover, the infimum is positive since $r\mapsto\bar h_r^\vartheta(\varepsilon)$ is continuous and strictly positive on $[\delta,T]$. In particular, $\mathbf{A}^{T,\vartheta,\varepsilon}$ can decrease from its previous running maximum on $[0,T-\delta]$
by at most $C_\delta\,\bar h_\delta^\vartheta(\varepsilon)^{-1}$. Moreover, by~\eqref{hSmallaAsympt}, $\bar h_\delta^\vartheta(\varepsilon)^{-1}\to0$ as $\varepsilon\downarrow0$. By the $L^2$ convergence of $\mathbf{M}^{T,\vartheta,\varepsilon}\to\mathbf{M}^{T,\vartheta}$
(which follows as in the estimate below),
we also have $\sup_{t\le T}|\mathbf{A}_t^{T,\vartheta,\varepsilon}-\mathbf{A}_t^{T,\vartheta}|\to0$ in $L^2$.
Letting $\varepsilon\downarrow0$ in the inequality
$\mathbf{A}_t^{T,\vartheta,\varepsilon}\ge \mathbf{A}_s^{T,\vartheta,\varepsilon}-C_\delta\,\bar h_\delta^\vartheta(\varepsilon)^{-1}$
for $0\le s<t\le T-\delta$ yields $\mathbf{A}_t^{T,\vartheta}\ge \mathbf{A}_s^{T,\vartheta}$.
Since $\delta\in(0,T)$ is arbitrary and $\mathbf{A}^{T,\vartheta}$ is continuous, $\mathbf{A}^{T,\vartheta}$ is increasing on $[0,T]$.

It remains only to show that $\mathbf{M}^{T,\vartheta,\varepsilon}\to\mathbf{M}^{T,\vartheta}$  converges in $L^2$. Define the adapted process $\{\Lambda_s^\varepsilon\}_{s\in[0,T]}$ by $\Lambda_s^\varepsilon
:=
\sum_{n\ge1}
\mathbf{1}_{[\rho_{n-1}^{0\uparrow\varepsilon},\,\rho_n^{\varepsilon\downarrow 0})}(s)$, that is, $\Lambda_s^\varepsilon=1$ when $s$ belongs to an excursion of $X$
outside the ball $\{|x|\le\varepsilon\}$ and $\Lambda_s^\varepsilon=0$ otherwise.
Since $\mathbf{A}^{T,\vartheta}$ is constant on excursion intervals, we have
$\Lambda_s^\varepsilon\,d\mathbf{A}_s^{T,\vartheta}=0$, and therefore
\[
\mathbf{M}_t^{T,\vartheta,\varepsilon}
 \, = \, 
\mathsf{p}_T^\vartheta(X_0)
 \, + \, 
\int_0^t  \, \Lambda_s^\varepsilon\,d\mathbf{S}_s^{T,\vartheta}
 \, = \, 
\mathsf{p}_T^\vartheta(X_0)
 \, + \, 
\int_0^t \, 
\Lambda_s^\varepsilon
\nabla\mathsf{p}_{T-s}^\vartheta(X_s)\cdot dW_s^{T,\vartheta} \, ,
\]
where the last equality follows from the definition~\eqref{Martingale} of
$\mathbf{M}^{T,\vartheta}$.
By It\^o's isometry and the inequality
$1-\Lambda_s^\varepsilon\le\mathbf{1}_{\{|X_s|\le\varepsilon\}}$, we obtain
\[
\mathbb{E}_\mu^{T,\vartheta}
\!\left[
\big|\mathbf{M}_t^{T,\vartheta,\varepsilon}-\mathbf{M}_t^{T,\vartheta}\big|^2
\right]
 \, = \, 
\mathbb{E}_\mu^{T,\vartheta}
\!\left[
\int_0^t
(1-\Lambda_s^\varepsilon)
\big|\nabla\mathsf{p}_{T-s}^\vartheta(X_s)\big|^2\,ds
\right]
 \, \le \, 
\mathbb{E}_\mu^{T,\vartheta}
\!\left[
\int_0^T \, 
\mathbf{1}_{\{|X_s|\le\varepsilon\}}
\big|\nabla\mathsf{p}_{T-s}^\vartheta(X_s)\big|^2\,ds
\right] \, ,
\]
which converges to $0$ as $\varepsilon\downarrow0$ by Hypothesis~\ref{H4}(ii).
Applying Doob's $L^2$ inequality yields
\[
\mathbb{E}_\mu^{T,\vartheta}
\!\left[
\sup_{t\le T}
\big|\mathbf{M}_t^{T,\vartheta,\varepsilon}-\mathbf{M}_t^{T,\vartheta}\big|^2
\right]
 \, \le \, 
4\,
\mathbb{E}_\mu^{T,\vartheta}
\!\left[
\big|\mathbf{M}_T^{T,\vartheta,\varepsilon}-\mathbf{M}_T^{T,\vartheta}\big|^2
\right]
\longrightarrow0 \quad\text{as }\varepsilon\downarrow0\,.
\]
This completes the proof.

\end{proof}

\subsection[Structural]{Structural properties of planar $h^\vartheta$-diffusions}\label{SubsectionThmSubMARTD}

Recall that $\mathcal O_T\in\mathcal{B}(\boldsymbol{\Upsilon})$ denotes the event
that the process $X$ visits the origin at some time in the interval $[0,T]$ and
$\tau := \inf\{t\in[0,T]:\, X_t=0\}$ denotes the first hitting time of the origin
on $[0,T]$, with the convention that $\tau=\infty$ if $X_t\neq0$ for all
$t\in[0,T]$. We have $\mathcal O_T=\{\tau\le T\}$ and
$\mathcal O_T^c=\{\tau>T\}$.

In the proof below, we work with the usual augmentation
$\mathcal{F}^{T,\vartheta,x}$ of the natural filtration $\mathcal{F}$ under
$\mathbb P_x^{T,\vartheta}$. All martingale, stopping-time, and Markov properties
established with respect to $\mathcal{F}$ therefore remain valid, and conditional
expectations with respect to $\mathcal{F}_{\mathsf{S}}$ are unchanged up to
$\mathbb P_x^{T,\vartheta}$-null sets (see, e.g.,
\cite[Section~2.7(A-B)]{Karatzas}).

Define the adapted process $\{\Lambda_t^{T,\vartheta}\}_{t\in[0,T]}$ by
\begin{align}\label{RNDvartheta}
\Lambda_t^{T,\vartheta}
\,:=\,
\frac{h_{T-t}^{\vartheta}\!\big(X_t\big)}
{(h_0^{\vartheta} * g_{T-t})\!\big(X_t\big)} \, ,
\qquad t\in[0,T] \, .
\end{align}
For any $\mathcal{F}$-stopping time $\mathsf{S}$, we write
$\Lambda_{\mathsf{S}}^{T,\vartheta}:=\Lambda_t^{T,\vartheta}\big|_{t=\mathsf{S}}$
on $\{\mathsf{S}<\tau\}$. Let $\mathring{\mathbb{P}}_{x }^{T,\vartheta}
:=\mathbb{P}_{x }^{T,\vartheta}[\,\cdot\mid\mathcal O_T^c]$ and denote by
$\mathring{\mathbb{E}}_{x }^{T,\vartheta}$ the corresponding expectation.
For any $\mathcal{F}$-stopping time $\mathsf{S}$ such that
$\mathring{\mathbb{E}}_{x }^{T,\vartheta}\!\big[\Lambda_{\mathsf{S}}^{T,\vartheta}\big]>0$,
define the probability measure $\mathbb{Q}_{x}^{T,\vartheta,\mathsf{S}}$
on $\mathcal{F}_{\mathsf{S}}$ by
\begin{align}\label{RNQ}
\mathbb{Q}_{x}^{T,\vartheta,\mathsf{S}}(d\omega)
\,:=\,
\frac{\Lambda_{\mathsf{S}}^{T,\vartheta}(\omega)}
{\mathring{\mathbb{E}}_{x }^{T,\vartheta}\!\big[\Lambda_{\mathsf{S}}^{T,\vartheta}\big]}\,
\mathring{\mathbb{P}}_{x }^{T,\vartheta}(d\omega) \, ,
\qquad \text{on }\{\mathsf{S}<\tau\} \, .
\end{align}

\begin{proof}[Proof of Theorem~\ref{ThmSubMARTD}]
\noindent Part (i): On the event $\{\tau>T\}=\mathcal O_T^c$ we have $\mathbf{S}^{T,\vartheta}_{\tau\wedge T} = \mathsf{p}_0^\vartheta(X_T) =1$. Conversely, on $\{\tau\le T\}$ we have $\mathbf{S}^{T,\vartheta}_{\tau\wedge T}
=\mathsf{p}_{T-\tau}^\vartheta(0)=0$ since $h_{T-\tau}^\vartheta(0)=\infty$ whereas $(h_0^\vartheta*g_{T-\tau})(0)<\infty$.
Therefore,
\begin{align}\label{1OCompD}
\mathbf{S}^{T,\vartheta}_{\tau\wedge T}
 \, = \, \mathbf{1}_{\{\tau>T\}}
 \, = \, \mathbf{1}_{\mathcal O_T^c} \, ,
\qquad \mathbb{P}_x^{T,\vartheta}\text{-a.s.}
\end{align}
By Proposition~\ref{PropSubMartD}, the increasing component
$\mathbf{A}^{T,\vartheta}$ in the Doob-Meyer decomposition
$\mathbf{S}^{T,\vartheta}
=\mathbf{M}^{T,\vartheta}+\mathbf{A}^{T,\vartheta}$
is constant on every time interval during which $X$ remains away from the origin.
In particular, $\mathbf{A}^{T,\vartheta}$ is constant on $[0,\tau)$, and hence
$\mathbf{A}_t^{T,\vartheta}=\mathbf{A}_0^{T,\vartheta}=0$ for all $t<\tau$.
Since $\mathbf{A}^{T,\vartheta}$ is continuous, it follows that
$\mathbf{A}_\tau^{T,\vartheta}=0$ as well.
Applying the optional stopping theorem to the martingale
$\mathbf{M}^{T,\vartheta}
=\mathbf{S}^{T,\vartheta}-\mathbf{A}^{T,\vartheta}$
at the bounded stopping time $\tau\wedge T$ yields
\begin{align}
\mathbb{E}_x^{T,\vartheta}\!\big[
\mathbf{S}^{T,\vartheta}_{\tau\wedge T}
\big]
\,=\,
\mathbb{E}_x^{T,\vartheta}\!\big[
\mathbf{S}^{T,\vartheta}_{\tau\wedge T}
-\mathbf{A}^{T,\vartheta}_{\tau\wedge T}
\big]
\,=\,
\mathbf{S}^{T,\vartheta}_0
\,=\,
\mathsf{p}_T^\vartheta(x)\,, \nonumber
\end{align}
where we used $\mathbf{A}^{T,\vartheta}_{\tau\wedge T}=0$. Combining the above with~\eqref{1OCompD} yields
$\mathbb{P}_x^{T,\vartheta}[\mathcal O_T^c]
=\mathsf{p}_T^\vartheta(x)$, which proves~\textup{(i)}. \vspace{.3cm}

\noindent\textup{Part (ii):}
Define the bounded process $\{M_t^{T,\vartheta}\}_{t\in[0,T]}$ by
$M_t^{T,\vartheta}
:=
\mathsf{p}_T^\vartheta(x)^{-1}\,\mathbf{S}^{T,\vartheta}_{\tau\wedge t}$.
By~\eqref{1OCompD} and part~\textup{(i)}, we have
$M_T^{T,\vartheta}
=\mathbb{P}_x^{T,\vartheta}[\mathcal O_T^c]^{-1}\,\mathbf{1}_{\mathcal O_T^c}$.
Recalling that $\mathring{\mathbb{P}}^{T,\vartheta}_x
:=\mathbb{P}_x^{T,\vartheta}[\,\cdot\,\mid \mathcal O_T^c]$, it follows that
\[
\frac{d\mathring{\mathbb{P}}^{T,\vartheta}_x}{d\mathbb{P}_x^{T,\vartheta}}\Big|_{\mathcal{F}_T}
 \, = \, 
M_T^{T,\vartheta} \, .
\]
Moreover, since $\mathbf{S}^{T,\vartheta}=\mathbf{M}^{T,\vartheta}+\mathbf{A}^{T,\vartheta}$ with
$\mathbf{A}^{T,\vartheta}$ constant on $[0,\tau)$, we have
$\mathbf{S}^{T,\vartheta}_{\tau\wedge t}=\mathbf{M}^{T,\vartheta}_{\tau\wedge t}$ and hence
$\{M_t^{T,\vartheta}\}_{t\in[0,T]}$ is a mean-one $\mathbb{P}_x^{T,\vartheta}$-martingale
by optional stopping for the martingale $\mathbf{M}^{T,\vartheta}$ at the bounded times $\tau\wedge t$. Since $\mathbf{S}^{T,\vartheta}=\mathbf{M}^{T,\vartheta}+\mathbf{A}^{T,\vartheta}$ and
$\mathbf{A}^{T,\vartheta}$ does not increase on $\{t<\tau\}$, we have
\[
d\mathbf{S}_t^{T,\vartheta}
 \, = \, 
\mathbf{1}_{\{t<\tau\}}\,
\nabla \mathsf{p}_{T-t}^\vartheta(X_t)\cdot dW_t^{T,\vartheta} \, ,
\qquad 0 \le t\le T \, ,
\]
where $W^{T,\vartheta}$ is the $\mathbb{P}_x^{T,\vartheta}$-Brownian motion from
Proposition~\ref{PropStochPreD}. Therefore
\begin{align}\label{Emmy}
dM_t^{T,\vartheta}
 \, = \, 
\frac{1}{\mathsf{p}_T^\vartheta(x)}\,\,d\mathbf{S}^{T,\vartheta}_{\tau\wedge t}
 \, = \, 
\mathbf{1}_{\{t<\tau\}}\,
\frac{1}{\mathsf{p}_T^\vartheta(x)}\,\,
\nabla \mathsf{p}_{T-t}^\vartheta(X_t)\cdot dW_t^{T,\vartheta}
 \, = \, 
\mathbf{1}_{\{t<\tau\}}\,M_t^{T,\vartheta}\,
\nabla\log\mathsf{p}_{T-t}^\vartheta(X_t)\cdot dW_t^{T,\vartheta}\,,
\end{align}
where the last equality uses that on $\{t<\tau\}$ we have
$M_t^{T,\vartheta}
=\frac{1}{\mathsf{p}_T^\vartheta(x)}\,\mathsf{p}_{T-t}^\vartheta(X_t)$
and $\nabla \mathsf{p} = \mathsf{p}\,\nabla\log \mathsf{p}$. Thus, by Girsanov's theorem~\cite[Thm.\ 5.1 \& Cor.\ 5.13]{Karatzas}, under
$\mathring{\mathbb{P}}_x^{T,\vartheta}$ the process
\begin{align}\label{TwoWs}
\mathring W_t^{T,\vartheta}
:=
W_t^{T,\vartheta}
-
\int_0^t \mathbf{1}_{\{s<\tau\}}\nabla\log\mathsf{p}_{T-s}^\vartheta(X_s)\,ds
\end{align}
is a two-dimensional standard Brownian motion.
Since under $\mathbb{P}_x^{T,\vartheta}$ the coordinate process satisfies
$dX_t=dW_t^{T,\vartheta}+b_{T-t}^\vartheta(X_t)\,dt$ (Proposition~\ref{PropStochPreD}),
it follows from~\eqref{TwoWs} that under $\mathring{\mathbb{P}}_x^{T,\vartheta}$,
\[
dX_t
 \, = \, 
d\mathring W_t^{T,\vartheta}
 \, + \, 
\Big(b_{T-t}^\vartheta(X_t)+\mathbf{1}_{\{t<\tau\}}\nabla\log\mathsf{p}_{T-t}^\vartheta(X_t)\Big)\,dt
 \, = \, 
d\mathring W_t^{T,\vartheta}
 \, + \, 
\nabla\log(h_0^\vartheta*g_{T-t})(X_t)\,dt \, .
\]
Equivalently, with $\mathring b_t^\vartheta(x):=\nabla_x\log(h_0^\vartheta*g_t)(x)$, the SDE
$dX_t=d\mathring W_t^{T,\vartheta}+\mathring b_{T-t}^\vartheta(X_t)\,dt$ holds
for $0\le t\le T$ under $\mathring{\mathbb{P}}_x^{T,\vartheta}$, as claimed. \vspace{.3cm}

\noindent Part (iii):
Let $F$ be a bounded $\mathcal{F}_{\mathsf{S}}$-measurable random variable.
Set $\mathcal O_T^c:=\{\tau>T\}$. By Part~(i),
we have
\begin{align}\label{Eq:CondSurvival_S}
\mathbb{P}_x^{T,\vartheta}\!\big[\mathcal O_T^c\mid \mathcal{F}_{\mathsf{S}}\big]
 \, = \, 
\mathbf{1}_{\{\mathsf{S}<\tau\}}\,
\frac{(h_0^\vartheta*g_{T-\mathsf{S}})(X_{\mathsf{S}})}{h_{T-\mathsf{S}}^\vartheta(X_{\mathsf{S}})}
 \, = \, 
\mathbf{1}_{\{\mathsf{S}<\tau\}}\,
\big(\Lambda_{\mathsf{S}}^{T,\vartheta}\big)^{-1} \, ,
\qquad \mathbb{P}_x^{T,\vartheta}\text{-a.s.}\,,
\end{align}
where recall that $\Lambda_{\mathsf{S}}^{T,\vartheta}$ is given by~\eqref{RNDvartheta}. In particular, on $\{\mathsf{S}<\tau\}$ the conditional probability in~\eqref{Eq:CondSurvival_S}
is strictly positive. Using the tower property we obtain the first equality below.
\begin{align}\label{Eq:ES_identity}
\mathbb{E}_x^{T,\vartheta}\!\big[F\,\mathbf{1}_{\{\mathsf{S}<\tau\}}\big]
\,=\,
\mathbb{E}_x^{T,\vartheta}\!\Bigg[
F\,\mathbf{1}_{\{\mathsf{S}<\tau\}}\,
\frac{\mathbf{1}_{\mathcal O_T^c}}{\mathbb{P}_x^{T,\vartheta}\!\big[\mathcal O_T^c\mid \mathcal{F}_{\mathsf{S}}\big]}
\Bigg]
\,=\,
\mathbb{E}_x^{T,\vartheta}\!\Big[
F\,\mathbf{1}_{\mathcal O_T^c}\,
\Lambda_{\mathsf{S}}^{T,\vartheta}
\Big]\,
\,=\,
\mathbb{P}_x^{T,\vartheta}\!\big[ \mathcal O_T^c\big]\, 
\mathbb{\mathring{E}}_x^{T,\vartheta}\!\Big[ F\,
\Lambda_{\mathsf{S}}^{T,\vartheta}
\Big]\, ,
\end{align}
The second equality above uses~\eqref{Eq:CondSurvival_S} and the last equality uses the definition of $\mathring{\mathbb{P}}_x^{T, \vartheta}:=\mathbb{P}_x^{T,\vartheta}[\cdot\mid \mathcal O_T^c]$. In particular $F=\mathbf{1}_{\boldsymbol{\Upsilon}}$ in~\eqref{Eq:ES_identity} yields that $\mathbb{P}_x^{T, \vartheta}\!\big[ \mathsf{S}<\tau\big]
= \mathbb{P}_x^{T, \vartheta}\!\big[ \mathcal O_T^c\big]\,
\mathring{\mathbb{E}}_x^{T,\vartheta}\!\big[
\,\Lambda_{\mathsf{S}}^{T,\vartheta}
\big]$. Using this together with~\eqref{Eq:ES_identity}, we get the desired equality as following.
\begin{align*}
\mathbb{E}_x^{T,\vartheta}\!\big[F\mid \mathsf{S}<\tau\big]
& \, = \, 
\frac{\mathbb{E}_x^{T,\vartheta}\!\big[F\,\mathbf{1}_{\{\mathsf{S}<\tau\}}\big]}
{\mathbb{P}_x^{T,\vartheta}[\mathsf{S}<\tau]}
 \, = \, 
\frac{\mathring{\mathbb{E}}_x^{T,\vartheta}\!\big[F\,\Lambda_{\mathsf{S}}^{T,\vartheta}\big]}
{\mathring{\mathbb{E}}_x^{T,\vartheta}\!\big[\Lambda_{\mathsf{S}}^{T,\vartheta}\big]}
\end{align*}
Since this holds for every bounded $\mathcal{F}_{\mathsf{S}}$-measurable $F$,
it follows that the restriction of $\mathring{\mathbb{P}}_x^{T,\vartheta,\mathsf{S}}
:=\mathbb{P}_x^{T,\vartheta}[\,\cdot\,\mid \mathsf{S}<\tau]$ to $\mathcal{F}_{\mathsf{S}}$
coincides with the restriction of the measure $\mathbb{Q}_x^{T,\vartheta,\mathsf{S}}$
defined by~\eqref{RNQ}. Consequently, the stopped process $\{X_{t\wedge \mathsf{S}}\}_{t\in[0,T]}$ has the same law
under $\mathring{\mathbb{P}}_x^{T,\vartheta,\mathsf{S}}$ as under $\mathbb{Q}_x^{T,\vartheta,\mathsf{S}}$. \vspace{.3cm}

\noindent\textup{Part (iv):}
Fix $t\in(0,T)$.
Since $\{\tau\le t\}=\{\tau\le T\}\setminus\{t<\tau\le T\}$, we have
\begin{align}\label{CondDensitytau}
\mathbb{P}_x^{T,\vartheta}[\tau\le t\mid \mathcal O_T]
& \, = \, 
1-\frac{\mathbb{P}_x^{T,\vartheta}[t<\tau\le T]}{\mathbb{P}_x^{T,\vartheta}[\mathcal O_T]}
 \, = \, 
\frac{\mathbb{P}_x^{T,\vartheta}[\tau\le t]}{\mathbb{P}_x^{T,\vartheta}[\mathcal O_T]} \, .
\end{align}

Next, observe that
\[
\mathbb{P}_x^{T,\vartheta}[\tau\le t]
 \, = \, 
1 \, - \, \mathbb{P}_x^{T,\vartheta}[t<\tau]
 \, = \, 
1 \, - \, \mathbb{P}_x^{T,\vartheta}[\mathcal O_T^c]\,
\mathring{\mathbb{E}}_x^{T,\vartheta}\!\big[\Lambda_t^{T,\vartheta}\big] \, ,
\]
where the last equality follows from~\eqref{Eq:ES_identity} applied with the
deterministic stopping time \(\mathsf{S}:=t\) and \(F:=1\).
Substituting this into~\eqref{CondDensitytau} yields
\begin{align}
    \mathbb{P}_x^{T,\vartheta}[\tau\le t\mid \mathcal O_T]
\,=\,
\frac{1-\mathsf{p}_T^\vartheta(x)\,
\mathring{\mathbb{E}}_x^{T,\vartheta}\!\big[\Lambda_t^{T,\vartheta}\big]}
{1-\mathsf{p}_T^\vartheta(x)}\,, \nonumber
\end{align}
where we also used \(\mathbb{P}_x^{T,\vartheta}[\mathcal O_T^c]=\mathsf{p}_T^\vartheta(x)\).
Using~\eqref{RNDvartheta} and~\eqref{SecondTrans}, we compute
\(\mathring{\mathbb{E}}_x^{T,\vartheta}\!\big[\Lambda_t^{T,\vartheta}\big]\) as follows.
\begin{align}
\int_{\R^2}
\frac{h_{T-t}^{\vartheta}(y)}{(h_0^{\vartheta} * g_{T-t})(y)}\,
\mathlarger{\mathsf{\mathring{d}}}_{0,t}^{T,\vartheta}(x,y)\,dy
\,=\,
\frac{1}{(h_0^\vartheta*g_T)(x)}
\int_{\R^2} h_{T-t}^{\vartheta}(y)\,g_t(x-y)\,dy
\,=\,
\frac{(h_{T-t}^\vartheta*g_t)(x)}{(h_0^\vartheta*g_T)(x)}\,
\nonumber
\end{align}
Substitution the above into \(\mathbb{P}_x^{T,\vartheta}[\tau\le t\mid \mathcal O_T]\),
followed by differentiation in \(t\), gives the desired conditional density of
\(\tau\) under \(\mathbb{P}_x^{T,\vartheta}[\,\cdot\,\mid\mathcal O_T]\).

\end{proof}

\begin{appendix}


\section{Auxiliary Lemmas}\label{AuxLemmas}

\begin{lemma}\label{LemmaSpaceTimeHarmonicHath}
Fix $T,\vartheta>0$ and let $h_0^\vartheta:\R^2\to(0,\infty)$ be radially symmetric. For all $t\in(0,T]$ and all $x\in\R^2$,
\begin{enumerate}[(i)]
\item
\(
\int_{\R^2} g_{t}(x-y)\,(h_0^\vartheta*g_{T-t})(y)\,dy
=
(h_0^\vartheta*g_{T})(x),
\)
where $g_t$ denotes the two-dimensional Gaussian heat kernel.

\item
\(
\int_{\R^2} \mathsf{K}_{t}^\vartheta(x,y)\,(\hat h_0^\vartheta*g_{T-t})(y)\,dy
=
(\hat h_0^\vartheta*g_{T})(x),
\)
where $\hat h_0^\vartheta(x):=x\,h_0^\vartheta(x)$ and $\mathsf{K}_{t}^\vartheta$ is
given by~\eqref{DefFullKer}.
\end{enumerate}
\end{lemma}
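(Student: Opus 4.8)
The plan is to deduce (i) from the Chapman--Kolmogorov identity $g_t*g_{T-t}=g_T$ for the two-dimensional Gaussian heat kernel, and then to reduce (ii) to (i) by showing that the interaction term of $\mathsf{K}_t^\vartheta$ contributes nothing, thanks to an odd-symmetry cancellation that uses the radial symmetry of $h_0^\vartheta$.

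For (i), I would expand $(h_0^\vartheta*g_{T-t})(y)=\int_{\R^2}g_{T-t}(y-z)\,h_0^\vartheta(z)\,dz$, substitute into the left-hand side, and interchange the $y$- and $z$-integrations; this is legitimate by Tonelli since the integrand is nonnegative, and all the convolutions appearing are finite because of the growth control in Definition~\ref{SpaceTimeHEFDef} together with Convention~\ref{ConventionAnalytic}. The inner $y$-integral is $\int_{\R^2}g_t(x-y)\,g_{T-t}(y-z)\,dy=g_T(x-z)$, and integrating the result against $h_0^\vartheta(z)\,dz$ produces exactly $(h_0^\vartheta*g_T)(x)$.

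For (ii), I would insert the explicit formula \eqref{DefFullKer} for $\mathsf{K}_t^\vartheta(x,y)$ and split the left-hand side into the free part and the interaction part. The free part $\int_{\R^2}g_t(x-y)\,(\hat h_0^\vartheta*g_{T-t})(y)\,dy$ equals $(\hat h_0^\vartheta*g_T)(x)$ by applying the computation of part (i) separately to each scalar component $z\mapsto z_j\,h_0^\vartheta(z)$ of $\hat h_0^\vartheta$; note that radial symmetry plays no role here. In the interaction part the only $y$-dependence sits in the factor $g_{t-s}(y)$, so after interchanging the order of integration it becomes $2\pi\vartheta\int_{0<r<s<t}g_r(x)\,\nu'(\vartheta(s-r))\,\big(\int_{\R^2}g_{t-s}(y)\,(\hat h_0^\vartheta*g_{T-t})(y)\,dy\big)\,ds\,dr$. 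Since $g_{t-s}$ is even and $g_{t-s}*g_{T-t}=g_{T-s}$, the inner integral equals $(\hat h_0^\vartheta*g_{T-s})(0)=\int_{\R^2}z\,h_0^\vartheta(z)\,g_{T-s}(z)\,dz$; as $h_0^\vartheta$ and $g_{T-s}$ are both radially symmetric, the integrand $z\mapsto z\,h_0^\vartheta(z)\,g_{T-s}(z)$ is odd, so this integral vanishes for every $0<s<t$. Hence the interaction part is identically zero and (ii) follows from the free part alone.

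Everything above is elementary Gaussian algebra; the only point requiring any care is the justification of the interchanges of integration in (ii), where the integrand is vector-valued and sign-changing. I would dispose of this by bounding the absolute value of the integrand by the one obtained upon replacing $\hat h_0^\vartheta(z)$ with $|z|\,h_0^\vartheta(z)$, whose iterated integral is finite by the same growth bounds (and Convention~\ref{ConventionAnalytic}), so that Fubini's theorem applies componentwise. I do not anticipate any further obstacle.
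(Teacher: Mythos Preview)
Your proposal is correct and follows essentially the same route as the paper: part~(i) via the Gaussian semigroup identity $g_t*g_{T-t}=g_T$, and part~(ii) by splitting $\mathsf{K}_t^\vartheta=g_t+\mathsf{v}_t^\vartheta$, handling the free part as in~(i), and observing that the interaction part vanishes because the inner $y$-integral reduces to $(\hat h_0^\vartheta*g_{T-s})(0)=0$. Your write-up is in fact slightly more explicit than the paper's, since you spell out the odd-symmetry reason for this vanishing and discuss the Fubini justification, whereas the paper simply asserts that the inner integral is zero.
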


\begin{proof}
Part~(i) follows from the semigroup property of the Gaussian heat kernel. \vspace{.3cm}

\noindent Part~(ii): Fix $0\le s<t\le T$ and $x\in\R^2$. Since
$\mathsf{K}^{\vartheta}_t(x,y):=g_t(x-y)+ \mathsf{v}_t^{\vartheta}(x,y)$, where $\mathsf{v}_t^{\vartheta}(x,y)$ is defined in~\eqref{DefFullKerSecondTerm}, we have 
\begin{align}
\int_{\R^2}\, \mathsf{K}^{\vartheta}_t(x,y)\,(\hat h_0^\vartheta*g_{T-t})(y)\,dy
\,=\,
\int_{\R^2}\, g_t(x-y)\,(\hat h_0^\vartheta*g_{T-t})(y)\,dy
\,+\, 
\int_{\R^2}\, \mathsf{v}^{\vartheta}_t(x,y)\,(\hat h_0^\vartheta*g_{T-t})(y)\,dy\,.
\nonumber
\end{align}
By the semigroup property of the Gaussian heat kernel, we get the second equality below.
\begin{align}
    \int_{\R^2}\, g_t(x-y)\,(\hat h_0^\vartheta*g_{T-t})(y)\,dy
\,=\,&
\int_{\R^2}  z \, h_0^{\vartheta}(z)\, \int_{\R^2}\, g_{t-s}(x-y)\,   g_{T-t}(y-z)\,dy\, dz \nonumber \\
\,=\,&
\int_{\R^2} z\,h_0^\vartheta(z)\,g_{T-s}(x-z)\,dz
\,=\,
(\hat h_0^\vartheta*g_{T-s})(x)\, \nonumber
\end{align}
Using~\eqref{DefFullKerSecondTerm}, we further have
\begin{align*}
\int_{\R^2}\mathsf v_t^{\vartheta}(x,y)\,(\hat h_0^\vartheta*g_{T-t})(y)\,dy
\,=\,
2\pi\,\vartheta
\int_{0<r<s<t}
g_r(x)\,\nu'\!\big(\vartheta(s-r)\big)
\bigg[\int_{\R^2} g_{t-s}(y)\,(\hat h_0^\vartheta*g_{T-t})(y)\,dy\bigg]
\,ds\,dr 
\,=\,0
\end{align*}
since the inner integral is vanishes.

\end{proof}

\subsection{Modified Bessel function}

Recall that the modified Bessel function of the second kind of order $\nu\in\R$ admits the integral representation (see, for instance,~\cite[p.\ 183]{Watson})
\begin{align}\label{DefBessel}
K_\nu(z)
\,:=\,
\frac{1}{2}\Big(\frac{z}{2}\Big)^{\nu}
\int_0^\infty
a^{-\nu-1}\,
e^{-a-\frac{z^2}{4a}}\,
da,
\qquad z>0 \, .
\end{align}
Next, consider the corresponding incomplete modified Bessel function, defined as
the upper tail of the above integral:
\begin{align}\label{DefIncBessel}
K_\nu(z,y)
\,:=\,
\frac{1}{2}\Big(\frac{z}{2}\Big)^{\nu}
\int_{y}^{\infty}
a^{-\nu-1}\,
e^{-a-\frac{z^2}{4a}}\,
da,
\qquad z>0\,,\ y>0\,.
\end{align}

\begin{lemma}\label{LemRealUnnormEigen}
Given $t,\vartheta>0$, let
$\mathsf{K}_t^{\vartheta}:\mathbb R^2\times\mathbb R^2\to[0,\infty]$
be defined as in~\eqref{DefFullKer} and $K_0(z,y)$ denote the incomplete modified Bessel function of the second kind of order $0$.
Then, for all $x\in\mathbb R^2\setminus\{0\}$,
\begin{enumerate}[(i)]

\item $\int_{\R^2}\, g_t(x-y)\,K_0\big(\sqrt{2\vartheta}  |y|\big)\,dy \,=\, e^{ \vartheta t }\, K_0\big(\sqrt{2 \vartheta} |x|\,,\vartheta t\big)$

\item $\int_{\R^2}\, g_{t}(x-y) \, K_0\big(\sqrt{2\vartheta}|y| \,,\vartheta (T-t) \big) \, dy \,=\, e^{\vartheta t} K_0\big(\sqrt{2\vartheta}  |x|\,,\vartheta T\big)$

\item $\int_{\mathbb R^2} \mathsf{K}_t^{\vartheta}(x,y)\, K_0\!\big(\sqrt{2\vartheta}\,|y|\big)\,dy = e^{\vartheta t}\, K_0\!\big(\sqrt{2\vartheta}\,|x|\big)$

\end{enumerate}

\end{lemma}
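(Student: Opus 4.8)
The plan is to base everything on the subordination identity
\[
K_0\big(\sqrt{2\vartheta}\,|x|\,,\,\vartheta\sigma\big)
\,=\,
\pi\int_\sigma^\infty e^{-\vartheta u}\,g_u(x)\,du\,,
\qquad \sigma\ge 0,\ x\neq 0,
\]
where the case $\sigma=0$ is read as the full modified Bessel function $K_0(\sqrt{2\vartheta}|x|)$. This is immediate from~\eqref{DefBessel} and~\eqref{DefIncBessel} with $\nu=0$ after the substitution $a=\vartheta u$, since then the integrand becomes $\tfrac{1}{2u}e^{-|x|^2/(2u)}=\pi\,g_u(x)$. All integrands below are nonnegative, so Tonelli's theorem justifies every interchange of integrals, and finiteness is manifest from the exponential decay of $K_0$ and its locally integrable logarithmic singularity at the origin. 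For~\textup{(i)} I would insert this identity with $\sigma=0$, use the Gaussian semigroup property $g_t*g_u=g_{t+u}$, and substitute $v=t+u$:
\[
\int_{\R^2} g_t(x-y)\,K_0\big(\sqrt{2\vartheta}|y|\big)\,dy
\,=\,\pi\int_0^\infty e^{-\vartheta u}\, g_{t+u}(x)\,du
\,=\,\pi\,e^{\vartheta t}\!\int_t^\infty e^{-\vartheta v}\, g_v(x)\,dv
\,=\,e^{\vartheta t}\,K_0\big(\sqrt{2\vartheta}|x|\,,\,\vartheta t\big),
\]
the last step being the subordination identity once more. Part~\textup{(ii)} is the identical computation with the inner integral running from $T-t$ in place of $0$; then $v=t+u$ ranges over $[T,\infty)$ and one reads off $e^{\vartheta t}K_0(\sqrt{2\vartheta}|x|,\vartheta T)$. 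Taking $x=0$ in~\textup{(i)} also records the scalar fact $\int_{\R^2} g_\sigma(y)\,K_0(\sqrt{2\vartheta}|y|)\,dy=\tfrac12\, e^{\vartheta\sigma}E_1(\vartheta\sigma)$, where $E_1(a):=\int_a^\infty u^{-1}e^{-u}\,du$, which will be used below.

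For~\textup{(iii)} I would split $\mathsf{K}_t^{\vartheta}(x,y)=g_t(x-y)+\mathsf v_t^{\vartheta}(x,y)$ with $\mathsf v_t^{\vartheta}$ as in~\eqref{DefFullKerSecondTerm}. The $g_t$-term contributes $e^{\vartheta t}K_0(\sqrt{2\vartheta}|x|,\vartheta t)$ by~\textup{(i)}, so it remains to show that the $\mathsf v_t^{\vartheta}$-term contributes $e^{\vartheta t}\big[K_0(\sqrt{2\vartheta}|x|)-K_0(\sqrt{2\vartheta}|x|,\vartheta t)\big]$, which by subordination equals $\pi\int_0^t e^{\vartheta(t-r)}g_r(x)\,dr$. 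Integrating $K_0(\sqrt{2\vartheta}|y|)$ against $g_{t-s}(y)$ via the scalar fact above and then substituting $\sigma=s-r$ in the double time integral, the $\mathsf v_t^{\vartheta}$-term becomes $\pi\vartheta\int_0^t g_r(x)\,\Phi(t-r)\,dr$ with $\Phi(\tau):=\int_0^\tau \nu'(\vartheta\sigma)\,e^{\vartheta(\tau-\sigma)}\,E_1\big(\vartheta(\tau-\sigma)\big)\,d\sigma$. Comparing with the target reduces all of~\textup{(iii)} to the pointwise identity $\vartheta\,\Phi(\tau)=e^{\vartheta\tau}$ for $\tau>0$, which, after the substitution $p=\vartheta\sigma$ and $A=\vartheta\tau$, is exactly
\[
\int_0^A \nu'(p)\,e^{-p}\,E_1(A-p)\,dp\,=\,1\,,\qquad A>0.
\]

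This last identity is the crux of the proof, and I would prove it by taking Laplace transforms in $A$, observing that the left-hand side is the convolution of $p\mapsto\nu'(p)e^{-p}$ with $E_1$. From the definition of the Volterra function one gets $\int_0^\infty e^{-sa}\nu(a)\,da=\frac{1}{s\log s}$ for $s>1$ (interchange the $a$- and $s$-integrals and evaluate the resulting Gamma integral), and $\nu(0^+)=0$ by dominated convergence in the defining integral; hence $\mathcal L[\nu'](s)=\frac{1}{\log s}$ and therefore $\mathcal L\big[\nu'(\cdot)\,e^{-\cdot}\big](s)=\frac{1}{\log(1+s)}$ for $s>0$. Together with $\mathcal L[E_1](s)=\frac{\log(1+s)}{s}$ for $s>0$ (write $E_1(a)=\int_1^\infty u^{-1}e^{-au}\,du$ and interchange), the product of the two transforms is $\frac1s=\mathcal L[\mathbf 1](s)$, so by uniqueness of Laplace transforms the convolution is the constant function $1$. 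The main obstacles are the bookkeeping with the nested time integral defining $\mathsf v_t^{\vartheta}$ and the verification that the mild integrable singularities $\nu'(p)\sim p^{-1}(\log p)^{-2}$ as $p\downarrow0$ and $E_1(a)\sim\log(1/a)$ as $a\downarrow0$ do not obstruct forming the convolution or interchanging integrals in the Laplace computation; neither point is deep, but both should be addressed explicitly.
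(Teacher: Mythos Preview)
Your proof is correct and follows essentially the same route as the paper: the same subordination representation of $K_0$ and $K_0(\cdot,\cdot)$ in terms of Gaussian densities, the same Gaussian semigroup argument for parts~(i)--(ii), and the same decomposition $\mathsf{K}_t^{\vartheta}=g_t+\mathsf v_t^{\vartheta}$ for part~(iii), reducing everything to the convolution identity $\int_0^A \nu'(p)\,e^{-p}\,E_1(A-p)\,dp=1$. The only substantive difference is that the paper cites this identity from~\cite[Lem.~8.9]{CM} (written there as $\int_0^A \nu'(b)\,\boldsymbol{E}(A-b)\,db=e^{A}$ with $\boldsymbol{E}(x)=e^{x}E_1(x)$), whereas you supply a clean self-contained Laplace-transform proof; your version is therefore more self-contained, while the paper's is shorter by deferring to the reference.
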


\begin{proof}

\noindent Part (i):  Applying~(\ref{DefBessel}) in particular for $z=\sqrt{2 \vartheta}|x|$ and $\nu=0$ , we obtain
\begin{align}\label{DefBessel2}
K_0\big(\sqrt{2 \vartheta}|x| \big)\,=\,\int_0^{\infty}\, \frac{1}{2a}e^{-a-\frac{\vartheta |x|^2}{2a} }da\,\,=\,\int_0^{\infty}\, \frac{1}{2a}e^{-\vartheta a-\frac{|x|^2}{2a} }da\,,
\end{align}
where the second representation follows from the change of variables
$a\mapsto \vartheta a$. Using the Gaussian heat kernel
$g_t(x)=\frac{1}{2\pi t}\,e^{-\frac{|x|^2}{2t}}$ together with
\eqref{DefBessel2}, we get the first two equalities below.
\begin{align}
\int_{\R^2}\, g_t(x-y)\,K_0\big(\sqrt{2\vartheta}  |y|\big)\,dy
\,=\,
&\int_{\R^2}\,\frac{ e^{ -\frac{ |x-y|^2  }{ 2t }  }
 }{ 2\pi t}\,K_0\big(\sqrt{2 \vartheta} |y|\big)\, \nonumber \\
 \,=\,&\,\frac{ 1  }{2   } \,\int_0^{\infty}\,e^{- \vartheta a }\,
\int_{\R^2}\,\frac{e^{ -\frac{ |x-y|^2  }{ 2t }  }
 }{ 2\pi t}\,\frac{e^{-\frac{ |y|^2  }{ 2a } }  }{ a }\,dy \, da  \nonumber  \\
\,=\, &\,\frac{ 1 }{2   } \,\int_0^{\infty}e^{-\vartheta  a}\,
\frac{e^{-\frac{ |x|^2  }{ 2(t+a) } }  }{ t+a } \, da  \nonumber \\
\,=\, &\,\frac{1}{2}\,e^{\vartheta t } \,\int_t^{\infty}\,e^{- \vartheta a }\,
\frac{e^{-\frac{ |x|^2  }{ 2a } }  }{ a }  \,da \nonumber \\
\,=\,& e^{ \vartheta t }\, K_0\big(\sqrt{2 \vartheta} |x|\,,\vartheta t\big)\, \nonumber 
\end{align}
Here, the third equality follows from the convolution identity for Gaussian
kernels, the fourth equality is obtained by the change of variables
$a\mapsto a-t$, and the final equality uses the definition of the incomplete modified Bessel function~\eqref{DefIncBessel}. \vspace{.3cm}

\noindent\textup{Part (ii).} Using part~(i), we get the first and third equalities below.
\begin{align}
\int_{\R^2}\, g_{t}(x-y) \, K_0\big(\sqrt{2\vartheta}|y| \,,\vartheta (T-t) \big) \, dy
\,=\,&
e^{-\vartheta (T-t)}\, \int_{\R^2} \, K_0\big(\sqrt{2\vartheta}|z| \big) \,
\int_{\R^2}\, g_{t}(x-y) \, \, g_{T-t}(y-z) \,dy\, dz \nonumber \\
\,=\, &e^{-\vartheta (T-t)}\, \int_{\R^2}\,K_0\big(\sqrt{2\vartheta}  |z|\big)\,  g_{T}(x-z) \, dz \nonumber \\
\,=\, &e^{\vartheta t}\,K_0\big(\sqrt{2\vartheta}  |x|\,,\vartheta T\big)\,, \nonumber 
\end{align}
where the second equality uses the Gaussian convolution. \vspace{.3cm}

\noindent\textup{Part (iii).}
Since
$\mathsf{K}^{\vartheta}_t(x,y):=g_t(x-y)+ \mathsf{v}_t^{\vartheta}(x,y)$, where $\mathsf{v}_t^{\vartheta}(x,y)$ is defined in~\eqref{DefFullKerSecondTerm},
we may decompose
\begin{align}
\int_{\R^2}\, \mathsf{K}^{\vartheta}_t(x,y)\,
K_0\big(\sqrt{2\vartheta}\,|y|\big)\,dy
\,=\,
\int_{\R^2}\, g_t(x-y)\,
K_0\big(\sqrt{2\vartheta}\,|y|\big)\,dy
\,+\, 
\int_{\R^2}\, \mathsf{v}^{\vartheta}_t(x,y)\,
K_0\big(\sqrt{2\vartheta}\,|y|\big)\,dy .
\nonumber
\end{align}
Using part~(i) together with~\eqref{DefBessel2}, the first term can be written as
\begin{align}\label{ForI}
\int_{\R^2}\, g_t(x-y)\,
K_0\big(\sqrt{2\vartheta}\,|y|\big)\,dy
\,=\,
e^{\vartheta t}\,
K_0\big(\sqrt{2\vartheta}\,|x|\big)
-\pi\int_0^{t}\, g_a(x)\, e^{\vartheta(t-a)}\,da .
\end{align}
Next, using~\eqref{DefFullKerSecondTerm} and~\eqref{DefBessel2}, we obtain
\begin{align}
\int_{\R^2}\, \mathsf{v}^{\vartheta}_t(x,y)\,
K_0\big(\sqrt{2\vartheta}\,|y|\big)\,dy
\,=\,
&\pi \vartheta 
\int_{0<r<s<t}
g_r(x)\,
\nu'\big(\vartheta(s-r)\big)\,
\int_0^{\infty} e^{-\vartheta a}
\int_{\R^2}\, g_{t-s}(y)\,
\frac{e^{-\frac{|y|^2}{2a}}}{a}
\,dy\,da\,ds\,dr .
\nonumber
\end{align}
The inner integrals over $y\in\R^2$ and $a\in[0,\infty)$ can be evaluated as
\begin{align*}
\int_0^{\infty} e^{-\vartheta a}\,\frac{1}{a+t-s}\,da
\,=\,
e^{\vartheta (t-s)}
\int_{\vartheta (t-s)}^{\infty} \frac{e^{-b}}{b}\,db
\,=\,
e^{\vartheta (t-s)}\,E_1\!\big(\vartheta (t-s)\big)
\,=:\,
\boldsymbol{E}\!\big(\vartheta (t-s)\big),
\end{align*}
where, for the first equality, we have made the change of integration variable
$b=\vartheta(a+t-s)$, the function $E_1(x):=\int_x^{\infty}\frac{e^{-b}}{b}\,db$ for $x>0$ denotes the classical exponential integral, and
$\boldsymbol{E}(x):=e^{x}E_1(x)$ is its exponentially renormalized version. Therefore,
\begin{align}\label{ForII}
\int_{\R^2}\, \mathsf{v}^{\vartheta}_t(x,y)\,
K_0\big(\sqrt{2\vartheta}\,|y|\big)\,dy
\,=\,
&\pi \vartheta
\int_{0<r<s<t}\, g_r(x)\,
\nu'\big(\vartheta(s-r)\big)\,
\boldsymbol{E}\big(\vartheta(t-s)\big)\,ds\,dr
\nonumber\\
\,=\,
&\pi
\int_0^{t} \, g_r(x)\,
\int_0^{\vartheta(t-r)}
\nu'(b)\,
\boldsymbol{E}\big(\vartheta(t-r)-b\big)\,db\,dr
\nonumber\\
\,=\,
&\pi
\int_0^{t}\, g_r(x)\,
e^{\vartheta(t-r)}\,dr ,
\end{align}
where we have used the change of variables $b=\vartheta(s-r)$ and applied the
identity (see, for instance,~\cite[Lem.\ ~8.9]{CM}): for all $t,\vartheta>0$,
\[
\int_0^{\vartheta(t-r)}
\nu'(b)\,
\boldsymbol{E}\big(\vartheta(t-r)-b\big)\,db
\,=\,
e^{\vartheta(t-r)},
\qquad 0\le r<t .
\]
By adding~\eqref{ForI} and~\eqref{ForII} we get the desired identity.

\end{proof}

\end{appendix}

\end{document}